\documentclass{article}

\usepackage{graphicx} 
\usepackage{amsmath}
\usepackage{amsthm}
\usepackage{amssymb}
\usepackage{authblk}
\usepackage{bm}
\usepackage{enumerate}
\usepackage[a4paper, total={6in, 8in}]{geometry}
\usepackage{hyperref} 
\usepackage{mathrsfs}
\allowdisplaybreaks

\title{Convergence of an IP DG Method for the Quad-Curl Problem}
\author{Xianhao ZENG \\ email \href{mailto:xhzeng6-c@my.cityu.edu.hk}{xhzeng6-c@my.cityu.edu.hk}}
\affil{Department of Mathematics, City University of Hong Kong, Kowloon, Hong Kong SAR, China}

\theoremstyle{plain}
\newtheorem{theorem}{\textbf{Theorem}}[section]
\newtheorem{lemma}{\textbf{Lemma}}[section]

\theoremstyle{definition}
\newtheorem{definition}{\textbf{Definition}}[section]
\newtheorem{assumption}{\textbf{Assumption}}[section]
\newtheorem{remark}{\textbf{Remark}}[section]

\numberwithin{equation}{section}
\allowdisplaybreaks[4]

\begin{document}

\maketitle

\begin{abstract}
    This work analyzes revises the interior penalty (IP) discontinuous Galerkin (DG) method imposed in [Chen, G., Qiu, W., \& Xu, L. (2021). Analysis of an interior penalty DG method for the quad-curl problem. IMA Journal of Numerical Analysis, 41(4), 2990-3023.] for the quad-curl problem in a nonconvex polyhedral domain, while introducing a piecewise constant coefficient matrix. We derive two main results: Under minimal regularity assumptions, we prove that the numerical solutions converge strongly to the true solution in the $ H(\text{curl}) \times H^1(\Omega)$ norm. Under slightly higher regularity, we establish the optimal estimate of the convergence rate depending on the regularity of the solution. These two results, serving as a complement to the existing literature, completely answer how the concerned IP DG method performs on quad-curl problems with low regularity.
\end{abstract}

\section{Introduction}
Let $ \Omega $ be an open bounded simply-connected (can be nonconvex) polyhedral Lipschitz domain in $ \mathbb{R}^3 $. This paper focuses on the following quad-curl problem: find the vector field $ \bm{u} $ and the Lagrange multiplier $p$ such that
\begin{equation}\label{strong form}
    \begin{aligned}
        \nabla\times\nabla\times A(\nabla\times\nabla\times \bm{u}) + \nabla p = \bm{f} \quad&\text{in } \Omega; \\
        \nabla\cdot \bm{u} = 0 \quad&\text{in } \Omega; \\
        \bm{n} \times \bm{u} =0; \quad \bm{n} \times\nabla\times \bm{u} = 0 \quad & \text{on } \partial\Omega; \\
        p = 0 \quad & \text{on } \partial\Omega.
    \end{aligned}
\end{equation}
Here $ \bm{n} $ stands for the outward unit normal vector on $ \partial\Omega $, and the source term $ \bm{f} \in L^2(\Omega)^3 $. In addition, we assume that
\begin{itemize}
    \item $ \{\Omega_i\}_{i\in\Lambda} $ is a partition of $\Omega$ into Lipschitz polyhedra;
    \item $ A \in [ L^\infty(\Omega)]^{3\times3}$ takes constant symmetric‑matrix values on each subdomain $\{\Omega_i\}$;
    \item there exists a uniform constant $\alpha>0$ such that for all $ x\in\Omega$, 
    \begin{equation*}
        \xi^T A(x) \xi \ge \alpha\|\xi\|_{l^2},\;\forall \xi \in \mathbb{R}^3.
    \end{equation*}
\end{itemize}
The main goal of the present work is to provide a thorough analysis of the performance of the interior penalty (IP) discontinuous Galerkin (DG) method for the quad-curl problem as imposed in \cite{chen2021analysis}, under a very low regularity assumption. We present our discovers as two main conclusions, both of which are novel in the literature.
\subsection{Background in Physics}
The quad-curl problem originates from several physical applications, including Maxwell transmission eigenvalue theory (cf. \cite{monk2012finite}, \cite{haddar2004interior}) and resistive magnetohydrodynamics (MHD) (cf. \cite{zheng2011nonconforming}). In the context of the two models, we clarify the physical interpretation of the coefficient matrix $ A $.
\begin{itemize}
    \item In the inverse electromagnetic scattering theory, the transmission eigenvalue problem for the anisotropic Maxwell equations can be formulated in the following fourth-order problem: find the vector field $ \bm{u} $ and the number $k$ such that
    \begin{align*}
        ( \nabla\times\nabla\times - k^2 N )(N - I)^{-1}(\nabla\times\nabla\times \bm{u} - k^2 \bm{u}) = 0 \quad & \text{in } \Omega; \\
        \bm{n} \times \bm{u} = 0; \quad \bm{n} \times\nabla\times \bm{u} = 0 \quad & \text{on } \partial\Omega;
    \end{align*}
    where $ N $ stands for the index of refraction of an anisotropic medium, such that $ N $, $ N^{-1} $, together with either $ (N - I)^{-1} $ or $ (I - N)^{-1} $ are bounded positive definite. 
    \item The resistive MHD system reads: finding the velocity $ \bm{u} $, the pressure $ p $ and the magnetic induction field $B$ such that
    \begin{align*}
        \rho (\bm{u}_t + (\bm{u} \cdot \nabla)\bm{u}) + \nabla p & = \frac{1}{\mu_0} (\nabla \times B) + \mu \Delta \bm{u} \quad \text{in} \, \Omega, \\
        B_t - \nabla \times (\bm{u} \times B) & = -\frac{\eta}{\mu_0} (\nabla \times)^2 B \\
        &\quad - \frac{d_i}{\mu_0} \nabla \times ((\nabla \times B) \times B) - \frac{\eta^2}{\mu_0} (\nabla \times)^4 B \quad \text{in} \, \Omega, \\
        \nabla \cdot u & = 0 \quad \text{in} \, \Omega, \\
        \nabla \cdot B & = 0 \quad \text{in} \, \Omega.
    \end{align*}
    where $ \eta $ is the resistivity, $ \eta_2 $ is the hyper-resistivity, $ \mu _0 $ is the magnetic permeability of free space, and $ \mu $ is the viscosity.
\end{itemize}
Whenever such problems are posed on heterogeneous (multi‑material) domains, the contrast in the physical properties necessarily causes discontinuity of $ A $. This is an essential feature of the problem and requires careful consideration in numerical analysis. Motivated by this, we adopt the current setting for $\Omega_i$ and $ A $ as in \ref{strong form}. This setting is sufficiently simple while covering the most relevant scenarios encountered in practice. 
\subsection{Literature Review}
Compared with the vast work on numerical approaches to the MHD problem \textit{without} the quad-curl term, the amount of studies that involve the quad-curl term is limited (cf. \cite{zheng2011nonconforming}), and even fewer papers consider the discontinuous coefficient $ A $. The main reasons for this situation include the following essential difficulties associated with this problem:
\begin{enumerate}
    \item \textit{Lack of regularity.} At the continuous level of PDEs, the regularity of the solution depends strongly on the geometric properties of the domain. When $ A = I $, it is known (cf. \cite{nicaise2018singularities}) that for a smooth domain the exact solution belongs to $ [H^4(\Omega)]^3 $; however, when the domain has point or edge singularities –- which a nonconvex polyhedral domain typically exhibits –- the solution generally does not lie in $ [H^3(\Omega)]^3 $. For a nonconvex domain, the regularity result is proved in \cite{chen2021analysis}: $ \bm{u} \in [H^{ \frac{1}{2} + \delta }(\Omega)]^3 $, $ \nabla\times \bm{u} \in [H^1_0(\Omega)]^3 $, with the estimate
    $$ \| \bm{u} \|_{H^{ \frac{1}{2} + \delta }(\Omega)} + \| \nabla\times \bm{u} \|_{H^{ 1 }(\Omega)} + \| (\nabla\times)^2 \bm{u} \|_{L^2(\Omega)} + \| (\nabla\times)^4 \bm{u} \|_{L^2(\Omega)} + \| \nabla p \|_{L^2(\Omega)}  \le C \| \bm{f} \|_{L^2(\Omega)}. $$
    This result appears to be optimal among the existing literature. \par
    Another source of regularity reduction is the discontinuity of $A$: if $ A (\nabla\times)^2 \bm{u} $ were to gain continuity, $ (\nabla\times)^2 \bm{u} $ would lose continuity instead. \par 
    In conclusion, the regularity of $ (\nabla\times)^2 \bm{u} $ is so low -- barely better than $L^2$ and by no means above $ H^\frac{1}{2} $ -- that the conventional trace theorem fails to apply, rendering many traditional techniques in DG methods ineffective.
    \item \textit{The cost of conformity.} Choosing a suitable finite element space for discretization poses a significant challenge. \par
    First, $ H^1 $-conforming (or $C^0$-conforming) elements, although popular due to their simple implementation and good approximation properties, can cause serious troubles when dealing with curl‑related problems. As figured out in \cite{costabel1991coercive} and \cite{guermond2003mixed}, $ H^1(\Omega) \cap H_0(\text{curl};\Omega) $ is not dense in $ H_0(\text{curl};\Omega) \cap H(\text{div};\Omega) $, whenever the magnetic permeability $ \mu $ and conductor‑vacuum interface $ \Sigma $ are simultaneously non-smooth, or the domain $ \Omega $ is nonconvex. Consequently, when the Lagrange element is applied to solve the MHD flow, the numerical solutions for the magnetic field $ \mathbf{H}_h $ fail to converge to the exact solution $ \mathbf{H} $ in general. We can expect the $ H^2 $-conforming elements to suffer only more when applied on the quad-curl problem. Indeed, the numerical approach in \cite{zhang2009family} which uses an $ H^2 $-conforming (or $C^1$-conforming) elements for the quad-curl problem converges to an $H^2$ projection of the exact solution. \par
    Furthermore, $H(\text{curl}^2)$-conforming elements are theoretically the most natural choice. Indeed, quad-curl problems in two dimensions have been successfully solved with $H(\text{curl}^2)$-conforming elements (cf. \cite{zhang2019h}). The situation in three dimensions, however, is different: constructing such families of finite elements still faces significant technical difficulties. To the author’s knowledge, the only attempt in the existing literature to build $H(\text{curl}^2)$-conforming elements on tetrahedra is \cite{zhang2020family}. That work produced elements of order at least 7, with at least 315 degrees of freedom per cell, and proved interpolation error estimates under the assumption that $ \bm{u}, \nabla \times \bm{u} \in H^{\frac{7}{2} + \delta }(\Omega) $. Nevertheless, these elements fail to preserve $H(\text{curl}^2)$‑conformity under a general affine mapping. This shortcoming prevents their widespread use for the general quad-curl problem, both in theory and in practice. \par
    We now turn to $H(\text{curl})$-conforming elements. These elements feature a straightforward, low‑cost construction and possess desirable commuting and approximation properties (cf. \cite{nedelec1980mixed}, \cite{nedelec1986new}, \cite{alonso1999optimal}, \cite{monk2003finite} and \cite{boffi2013mixed}). Moreover, several numerical methods based on $H(\text{curl})$‑conforming elements have been successfully applied to the quad‑curl problem. A nonconforming method using the first kind N{\'e}d{\'e}lec family was studied in \cite{zheng2011nonconforming} under the regularity assumption $ \bm{u} \in [H^4(\Omega)]^3 $. A discontinuous Galerkin (DG) method using $H(\text{curl})$-conforming elements was investigated in \cite{hong2012discontinuous}, requiring $ \bm{u}, \nabla\times \bm{u} \in [H^2(\Omega)]^3 $. An interior penalty (IP) DG method was introduced and analyzed in \cite{sun2016mixed}, requiring $ \bm{u}, \nabla\times \bm{u} \in [H^3(\Omega)]^3 $. In \cite{chen2021analysis}, another IP method was proposed and an error estimate was proved under regularity assumptions including $ (\nabla\times)^2 \bm{u} \in [H^{ \frac{1}{2} + \delta }(\Omega)]^3 $. \par
    However, the performance of such methods for problems with very low regularity remains unknown.
\end{enumerate}
Motivated by \cite{chen2021analysis}, we continue the analysis on the same IP DG method for this question under minimal or low regularity assumption, applying distinct techniques, however. The success of this work will illustrate the broad applicability of this method to the quad-curl problems.
\subsection{The Content of This Work}
In the present work, we consider two scenarios for the exact solution $\bm{u}$:
\begin{itemize}
    \item \textit{No extra regularity}: $ (\nabla\times)^2 \bm{u} \in [L^2(\Omega)]^3 $, which is the minimum required for the weak formulation; or
    \item \textit{Low regularity}: $ (\nabla\times)^2 \bm{u} \in [H^s(\Omega)]^3 $ for $ 0 \le s \le \frac{1}{2} $, a scenario that remains novel in the literature.
\end{itemize}
For these two settings, we derive two convergence results, respectively.
\begin{enumerate}
    \item Under no extra regularity, we prove that the numerical solutions $ \{(\bm{u}_h,p_h)\}_{ h \in \mathcal{H} } $ converge to the weak solution to \ref{weak form}, $ (\bm{u},p) $, strongly in $ H_0(\text{curl}; \Omega) \times H^1_0(\Omega) $ (i.e., by norm). This convergence is theoretical, however, as no convergence rate is obtained.
    \item Under a set of mild assumptions (Assumption \ref{assumption}), including $ (\nabla\times)^2 \bm{u} \in [H^s(\Omega)]^3 $ for $ 0 \le s \le \frac{1}{2} $, we establish the optimal estimate: we follow the framework of \cite{ern2022quasi}, employing an augmented norm $ \| \cdot \|_{E_\#} $ (Definition \ref{aug norm}) which is stronger than the energy norm but compatible with the regularity, such that the error $ \| \bm{u}_h - \bm{u} \|_{E_\#} $ gains a convergence rate based on the regularity of the exact solution.
\end{enumerate}
To derive these two results, we employ different techniques. \par 
The theoretical convergence is obtained via a compactness argument introduced in \cite{qiu2023enriched} for the mixed formulation of the biharmonic equation. Applying it to the current method, we have successfully overcome the following technical difficulties:
\begin{enumerate}
    \item The compactness properties on embedding operators defined on spaces involving curl and div exhibit unique and subtle patterns. Based on a careful investigation on them, we reveal the convergence utilizing the technique of \textit{Hodge mapping} as in \cite{hiptmair2002finite}, \cite{monk2003finite} and the $H^1$ \textit{averaging operator} as in \cite{ern2017finite}. Notably, the proof never assumes the existence of any $H(\text{curl}^2)$-conforming element spaces on the mesh, nor does it require any interpolations onto such spaces. 
    \item The piecewise constant coefficient $ A $ admits discontinuities inside the domain, giving rise to the following type of pairing in our proof:
    $$ \langle  \bm{n}_{ F } \times \{\!\{  \nabla \times \bm{u}_h \}\!\}, [\![ A (\nabla \times)^2 \bm{v} ]\!] \rangle, $$
    which appears in the literature for the first time. 
    \item We allow non‑matching meshes for the subdomains $ \{ \Omega_i\} $, making the argument applicable in general settings. The cost for this is the need for handling terms defined on the subdomain boundaries $ \{ \partial \Omega_i\} $, which is also novel in literature.
\end{enumerate}
By addressing these challenges, we have demonstrated the great potential of this scheme in yielding convergence results of a broad class of numerical methods for various PDEs under no extra regularity assumption. \par
The optimal convergence rate is derived from the quasi‑optimal argument that is introduced in \cite{ern2022quasi}. Using a face‑to‑cell lifting operator, the author established an alternative version of Strang’s lemma that extends the regularity assumption to barely above $H^1$, and proved error estimates for several nonconforming methods applied to elliptic PDEs with contrasting coefficients. Specifically, \cite{ern2022quasi} implements this scheme by establishing the following techniques:
\begin{enumerate}
    \item The quasi-optimal property of the numerical solution, which means that the numerical solution is, up to a generic constant, as good as the best approximation in the finite element space. The framework has been raised in \cite{veeser2018quasiI} and \cite{veeser2018quasiIII}, and is continued in \cite{ern2022quasi}.
    \item The face-to-cell lifting operator, which utilizes subtle properties of the trace operator. \cite{ern2022quasi} constructs the operator in the Sobolev–Slobodeckij norm induced fractional-order Sobolev spaces, which is the first unambiguous clarification in literature. See the references therein for detailed information.
    \item The bilinear form which extends the notion of face integrals by using the face-to-cell operator. This technique is the bridge between the analysis of the exact solution and that of a finite element function, which appears crucial in the study. \cite{ern2022quasi} has proposed a rigorous argument on this.
\end{enumerate}
This strategy has been employed in the study of various fourth-order problems, such as \cite{dong2022hybrid} and \cite{dong2024c}, which analyze the HHO method for the biharmonic equation. In the present work, we follow the approach of \cite{dong2024c} and derive a similar error estimate for the quad curl problem, but state the regularity assumption in terms of Hilbert space indices. \par
However, unlike the aforementioned articles, the quasi-optimal argument does not conclude the proof, because it still requires certain novel nontrivial approximation properties of N{\'e}d{\'e}lec family. We establish these approximation properties for second, third and fourth ordered differential operators in Theorem \ref{Nedelec est 3}. \par
Moreover, the following convergence pattern is implied in this conclusion: The augmented norm $ \| \cdot \|_{E_\#} $ (Definition \ref{aug norm}) enriches the energy norm with properly scaled quantities related to $ \bm{u} $ computed on each cell. Therefore, for a given point, the contribution from the cells containing the point to the error-which describe the convergence rate near that point-depends only on the regularity of $\bm{u}$ in a neighborhood of that point. \par
The rest of this paper is organized as follows: Section 2 describes in detail the continuous PDE setting, the $H(\text{curl})$‑conforming IP method, and the finite element space. In Section 3, we apply the compactness argument to derive unconditional convergence. In Section 4, we employ the quasi‑optimal argument to obtain convergence rates. 

\section{Problem Setting}
\subsection{Basic Notations}
Throughout this work, $ C > 0 $ will stand for a finite constant which might depend on the geometry of the domain $ \Omega $, the subdomains $ \{ \Omega_i \} $, and the evaluation of $ A $, but shall be independent of the construction of mesh $ \{\mathcal{T}_h\} $, the mesh size $ h $, and the specific selection of a function from a certain function space. \par
We apply conventional notation for the Lebesgue and Sobolev spaces. Specifically, for $ 1 \le p < \infty $, the integer-order Sobolev space $ W^{m,p}(\Omega) $ is the Banach space induced by the norm
$$ \| u \|_{ W^{m,p}(\Omega)} : = ( \sum_{ |\alpha| \le m } \| \partial_\alpha u \|_{ L^p(\Omega)}^p )^{\frac{1}{p}}, $$
while the Sobolev–Slobodeckij seminorm is derived from the double integral
\begin{equation*}
    |u|_{W^{\sigma,p}(\Omega)}=(\int_\Omega\int_\Omega \frac{|u(x)-u(y)|^p}{|x-y|^{d + \sigma p }} dx dy )^{\frac{1}{p}}, \quad 0 < \sigma < 1,
\end{equation*}
and for $ s > 0 $, the fractional-order Sobolev space $ W^{s,p}(\Omega) $ is derived from the norm
$$ \| u \|_{ W^{s,p}(\Omega)} : = ( \| u \|_{ W^{s,p}(\Omega)}^p +  \sum_{ |\alpha| \le m } | \partial_\alpha u |_{W^{\sigma,p}(\Omega)}^p )^{\frac{1}{p}}, $$
where $ m = [s] $ is the integer part, and $ \sigma = s - m $. We denote by $ (\cdot, \cdot)_{\Omega} $ the standard inner product in $ L^2(\Omega) $. \par
We use bold letters to denote an $\mathbb{R}^3$ vector field. For example, $ \bm{u} := (u_1, u_2, u_3) $. The norm $ \| \bm{u} \|_{W^{s,p}(\Omega)} $ is defined component-wisely:
$$ \| \bm{u} \|_{W^{s,p}(\Omega)} := (\| u_1 \|_{W^{s,p}(\Omega)}^p + \| u_2 \|_{W^{s,p}(\Omega)}^p + \| u_3 \|_{W^{s,p}(\Omega)}^p)^\frac{1}{p}, $$
and similarly for the inner product $ (\bm{u}, \bm{v})_{\Omega} $. \par
The Hilbert spaces involving div and curl will be the foundation of the present work. They are
\begin{align*}
    H(\text{div};\Omega) & := \{ \bm{u} \in [L^2(\Omega)]^3| \nabla \cdot \bm{u} \in L^2(\Omega) \}; \\
    H(\text{curl};\Omega) & := \{ \bm{u} \in [L^2(\Omega)]^3| \nabla \times \bm{u} \in [L^2(\Omega)]^3 \}; \\
    H(\text{curl}^2;\Omega) & := \{ \bm{u} \in H(\text{curl};\Omega)| \nabla \times \nabla \times \bm{u} \in [L^2(\Omega)]^3 \};
\end{align*}
with the norms
\begin{align*}
    \| \bm{u} \|_{H(\text{div};\Omega)} & = ( \| \bm{u} \|_{L^2(\Omega)}^2 + \| \nabla \cdot \bm{u} \|_{L^2(\Omega)}^2 )^{\frac{1}{2}}; \\
    \| \bm{u} \|_{H(\text{curl};\Omega)} & = ( \| \bm{u} \|_{L^2(\Omega)}^2 + \| \nabla \times \bm{u} \|_{L^2(\Omega)}^2 )^{\frac{1}{2}}; \\
    \| \bm{u} \|_{H(\text{curl}^2;\Omega)} & = ( \| \bm{u} \|_{L^2(\Omega)}^2 + \| \nabla \times \bm{u} \|_{L^2(\Omega)}^2 + \| \nabla \times \nabla \times \bm{u} \|_{L^2(\Omega)}^2 )^{\frac{1}{2}}. \\
\end{align*}
We also define
\begin{align*}
    H(\text{div }0 ;\Omega) & := \{ \bm{u} \in [L^2(\Omega)]^3 | \nabla \cdot \bm{u} = 0 \}; \\
    H(\text{curl }0 ;\Omega) & := \{ \bm{u} \in [L^2(\Omega)]^3 | \nabla \times \bm{u} = 0 \};
\end{align*}
and
\begin{align*}
    H_0(\text{div};\Omega) & := \{ \bm{u} \in H(\text{div};\Omega) | \bm{n} \cdot \bm{u} = 0 \text{ on } \partial\Omega \}; \\
    H_0(\text{curl};\Omega) & := \{ \bm{u} \in H(\text{curl};\Omega) | \bm{n} \times \bm{u} = 0 \text{ on } \partial\Omega \}; \\
    H_0(\text{curl}^2;\Omega) & := \{ \bm{u} \in H(\text{curl}^2;\Omega) | \bm{n} \times \bm{u} = \bm{n} \times \nabla \times \bm{u} = 0 \text{ on } \partial\Omega \}. \\
\end{align*}

\subsection{The Weak Formulation}
We study the following weak formulation for \ref{strong form}: Find $ \bm{u} \in H_0(\text{curl}^2;\Omega) $ and $ p \in H^1_0(\Omega) $ such that
\begin{equation}\label{weak form}
    \begin{aligned}
        (A(\nabla\times\nabla\times \bm{u} ),\nabla\times\nabla\times \bm{v})+(\nabla p,\bm{v})=(\bm{f},\bm{v}),\quad & \forall \bm{u}\in H_0(\text{curl}^2;\Omega);\\
        (\bm{u},\nabla q)=0,\quad&\forall q\in H^1_0(\Omega).
    \end{aligned}
\end{equation}
\begin{theorem}
    \ref{weak form} is well posed, with the solution satisfying 
    \begin{enumerate}
        \item $ \bm{u} \in H^{s_0}(\Omega) $, $ \exists \frac{1}{2} < s_0 < 1 $;
        \item $ \nabla \times \bm{u} \in H^1_0(\Omega) $;
        \item $ \| \bm{u} \|_{H^{s_0}(\Omega)} + \| \nabla \times \bm{u} \|_{H^1(\Omega)} + \| (\nabla \times)^2 \bm{u} \|_{L^2(\Omega)} + \| (\nabla \times)^2 A (\nabla \times)^2 \bm{u} \|_{L^2(\Omega)} + \| p \|_{L^2(\Omega)} \le C \| \bm{f} \|_{L^2(\Omega)} $.
    \end{enumerate}
\end{theorem}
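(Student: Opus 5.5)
The plan splits into four steps: decouple $p$, get existence and uniqueness of $\bm{u}$ by Lax--Milgram on a divergence-free subspace, recover the regularity of $\nabla\times\bm{u}$ and $\bm{u}$ by applying embedding results twice, and read off the estimate for $(\nabla\times)^2A(\nabla\times)^2\bm{u}$ from the strong equation.

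\textbf{Step 1: the multiplier.} Take $\bm{v}=\nabla q$ with $q\in H^1_0(\Omega)$; this is admissible since $\nabla\times\nabla q=0$ and $\bm{n}\times\nabla q=0$ on $\partial\Omega$. The first equation of \eqref{weak form} then gives
\[
(\nabla p,\nabla q)=(\bm{f},\nabla q)\qquad\forall q\in H^1_0(\Omega).
\]
This is a Dirichlet Poisson problem, so $p$ is uniquely determined and $\|\nabla p\|_{L^2(\Omega)}\le\|\bm{f}\|_{L^2(\Omega)}$. By Poincar\'e, the same bound holds for $\|p\|_{L^2(\Omega)}$.

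\textbf{Step 2: existence and uniqueness of $\bm{u}$.} Set
\[
X=\{\bm{v}\in H_0(\text{curl}^2;\Omega):(\bm{v},\nabla q)=0\ \forall q\in H^1_0(\Omega)\},
\]
and pose the problem $(A(\nabla\times)^2\bm{u},(\nabla\times)^2\bm{v})=(\bm{f}-\nabla p,\bm{v})$ for all $\bm{v}\in X$. Every $\bm{v}\in H_0(\text{curl}^2;\Omega)$ splits as $\bm{v}=\bm{v}_0+\nabla q$ with $\bm{v}_0\in X$. The gradient part contributes nothing to the left-hand side, and it cancels on the right-hand side by Step 1. Hence the problem on $X$ is equivalent to \eqref{weak form}.

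Coercivity on $X$ comes from applying the Friedrichs inequality twice. First, $\bm{v}\in H_0(\text{curl};\Omega)\cap H(\text{div }0;\Omega)$, and $\Omega$ is simply connected with connected boundary, so $\|\bm{v}\|_{L^2(\Omega)}\le C\|\nabla\times\bm{v}\|_{L^2(\Omega)}$. Second, $\bm{w}=\nabla\times\bm{v}$ is divergence free and satisfies $\bm{n}\times\bm{w}=0$ on $\partial\Omega$, so $\|\bm{w}\|_{L^2(\Omega)}\le C\|\nabla\times\bm{w}\|_{L^2(\Omega)}$. Combining these with $\xi^TA\xi\ge\alpha|\xi|^2$ shows that $(A(\nabla\times)^2\cdot,(\nabla\times)^2\cdot)$ is coercive on $X$. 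Lax--Milgram then yields a unique $\bm{u}$ with
\[
\|\bm{u}\|_{H(\text{curl}^2;\Omega)}\le C\|\bm{f}\|_{L^2(\Omega)}.
\]

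\textbf{Step 3: regularity of $\bm{u}$ and $\nabla\times\bm{u}$.} The field $\bm{w}=\nabla\times\bm{u}$ lies in $H_0(\text{curl};\Omega)\cap H(\text{div};\Omega)$. By the Amrouche--Bernardi--Dauge--Girault embedding, this only gives $\bm{w}\in H^{1/2+\delta}(\Omega)$ on a nonconvex polyhedron. So this is the main obstacle: upgrading to $\nabla\times\bm{u}\in H^1_0(\Omega)$ requires the extra boundary condition.

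My plan uses both traces of $\bm{w}$. First, $\bm{n}\cdot\bm{w}=\bm{n}\cdot\nabla\times\bm{u}=\operatorname{div}_\Gamma(\bm{u}\times\bm{n})=0$, because $\bm{n}\times\bm{u}=0$. Second, $\bm{n}\times\bm{w}=0$ is imposed directly. So $\bm{w}$ has zero full trace, $\bm{w}\in H_0(\text{curl};\Omega)\cap H_0(\text{div};\Omega)$. For such fields, extension by zero to $\mathbb{R}^3$ keeps curl and div in $L^2$. The classical identity
\[
\|\nabla\bm{w}\|^2=\|\nabla\times\bm{w}\|^2+\|\nabla\cdot\bm{w}\|^2
\]
for compactly supported fields then gives $\bm{w}\in H^1_0(\Omega)$, with $\|\bm{w}\|_{H^1(\Omega)}\le C\|(\nabla\times)^2\bm{u}\|_{L^2(\Omega)}$. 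This is the argument I would try first. The delicate point is justifying the zero normal trace in $H^{-1/2}(\partial\Omega)$ rigorously.

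Next, $\bm{u}\in H_0(\text{curl};\Omega)\cap H(\text{div }0;\Omega)$, so the same embedding theorem gives $\bm{u}\in H^{s_0}(\Omega)$ for some $s_0>\frac12$, with norm bounded by $\|\nabla\times\bm{u}\|_{L^2(\Omega)}$. Whenever that embedding theorem gives an exponent $\ge1$, we may shrink $s_0$ to lie in $(\tfrac12,1)$.

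\textbf{Step 4: the fourth-order term.} Testing \eqref{weak form} with $\bm{v}\in[C_c^\infty(\Omega)]^3$ shows that, in the sense of distributions,
\[
(\nabla\times)^2A(\nabla\times)^2\bm{u}=\bm{f}-\nabla p\in[L^2(\Omega)]^3,
\]
with norm at most $2\|\bm{f}\|_{L^2(\Omega)}$. Adding up the bounds from Steps 1--4 gives item 3 of the theorem.
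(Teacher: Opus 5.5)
Your proof is correct and follows the standard route that the paper delegates to \cite[Theorem 2.4]{chen2021analysis}. The paper's only remark, that the uniform positive definiteness of $A$ supplies coercivity on the divergence-free kernel, is exactly your Step 2, and the regularity then comes, as in your Step 3, from $H_0(\text{curl};\Omega)\cap H_0(\text{div};\Omega)=[H^1_0(\Omega)]^3$ together with the Amrouche--Bernardi--Dauge--Girault embedding.

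Two small remarks. First, the zero normal trace you flag as delicate is routine: density of $[C^\infty_c(\Omega)]^3$ in $H_0(\text{curl};\Omega)$ shows that $\nabla\times$ maps $H_0(\text{curl};\Omega)$ into $H_0(\text{div};\Omega)$. Second, your added hypothesis that $\partial\Omega$ is connected is genuinely needed. Otherwise nonzero harmonic Dirichlet fields lie in $H_0(\text{curl}^2;\Omega)$ and are orthogonal to $\nabla H^1_0(\Omega)$, so they destroy uniqueness; the paper's ``simply connected'' must tacitly include this condition.
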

The proof is almost the same as in \cite[Theorem 2.4]{chen2021analysis}: just note that the setting of $ A $ perceives coercivity. 

\subsection{The Discrete Formulation}
Let $\{\mathcal{T}_h\}_{h\in\mathcal{H}}$ be a sequence of quasi-uniform shape regular meshes into tetrahedra $ K \in \mathcal{T}_h $. We do \textbf{not} assume that $ \mathcal{T}_h $ is subordinate to subdomains $ \{\Omega_i\} $, i.e., each cell $ K $ lies in exactly one of those $ \Omega_i $, except for section 4. \par
Let $ \mathcal{F}^\textit{int}_h $ and $ \mathcal{F}^\partial_h $ be the collection of all internal interfaces and all boundary faces of $ \mathcal{T}_h $, respectively, and $ \mathcal{F}_h $ for both kinds. We specify a uniform orientation for every $ K\in\mathcal{T}_h $, such that each interface $F$ admits the representation $ F = \partial K^+ \cap \partial K^- $, and $  \bm{n}_F :=  \bm{n}_{\partial K^+}|_F $. Let $ \mathcal{T}_F=\{K^+,K^-\} $ for the interfaces and $ \mathcal{T}_F=\{K^+\} $ on the boundary. Let $ \epsilon_{K,F}= \bm{n}_K\cdot  \bm{n}_F $. We assign to any piecewise smooth function $ \phi $ the quantities defined on $ \mathcal{F}_h $:
\begin{equation*}
    \{\!\{ \phi \}\!\}_F:=
    \begin{cases}
        \frac{1}{2}(\phi|_{K^+}+\phi|_{K^-})|_F,\;F \in \mathcal{F}^\textit{int}_h \\
        \phi_F,\;F \in \mathcal{F}^\partial_h \\
    \end{cases};
    [\![ \phi ]\!]_F:=
    \begin{cases}
        \phi|_{K^+}-\phi|_{K^-},\;F \in \mathcal{F}^\textit{int}_h \\
        \phi_F,\;F \in \mathcal{F}^\partial_h \\
    \end{cases}.
\end{equation*}
We denote by $ \mathcal{P}_k(K)$ the space of polynomial functions up to order $ k $ on the cell $K$, and $ \mathcal{P}_k(\mathcal{T}_h) := \prod_{ K \in \mathcal{T}_h } \mathcal{P}_k(K) $ to be the broken polynomial space. For for \ref{weak form}, we study the IP DG method the same as the one in \cite{chen2021analysis}:
\begin{definition}
    \begin{enumerate} 
        \item Define the finite element spaces as
        \begin{equation*}
            E_h=H_0(\text{curl};\Omega)\cap[\mathcal{P}_k(\mathcal{T}_h)]^3; \quad Q_h=H^1_0(\Omega)\cap\mathcal{P}_{k+1}(\mathcal{T}_h); \quad k \ge 2. 
        \end{equation*}
        \item Define the bilinear forms $ a_h: E_h \times E_h \to \mathbb{R} $ as
        \begin{equation*}
            \begin{split}
                a_h( \bm{u}_h , \bm{v}_h ) = \sum_{K\in \mathcal{T}_h} ( A (\nabla\times)^2 \bm{u}_h , (\nabla\times)^2 \bm{v}_h )_{K} - \sum_{F \in \mathcal{F}_h} \langle \{\!\{ A (\nabla\times)^2 \bm{u}_h \}\!\},  \bm{n}_F \times [\![ \nabla\times \bm{v}_h ]\!] \rangle_{F}
                \\ - \sum_{F \in \mathcal{F}_h} \langle \{\!\{ A (\nabla\times)^2 \bm{v}_h \}\!\},  \bm{n}_F \times [\![\nabla\times \bm{u}_h ]\!] \rangle_{F} + \sum_{F \in \mathcal{F}_h} \frac{\tau}{h_F} \langle  \bm{n}_F \times [\![ \nabla\times \bm{u}_h ]\!],  \bm{n}_F \times [\![ \nabla\times \bm{v}_h ]\!] \rangle_{F}.
            \end{split}
        \end{equation*}
        where $ \tau > 0 $ is sufficiently large; $ b_h: Q_h \times E_h \to \mathbb{R} $ as
        \begin{equation*}
            b_h(q_h, \bm{v}_h )=\sum_{K\in \mathcal{T}_h}(\nabla q_h, \bm{v}_h )_K.
        \end{equation*}
        \item The discrete formulation is: Find $ ( \bm{u}_h ,p_h) \in E_h \times Q_h $ such that
        \begin{equation}\label{ip mtd}
            \begin{aligned}
                a_h( \bm{u}_h , \bm{v}_h )+b_h(p_h, \bm{v}_h )=(f, \bm{v}_h )_\Omega \quad & \forall \bm{v}_h \in E_h;\\
                b_h(q_h, \bm{u}_h )=0,\quad&\forall q_h\in Q_h.
            \end{aligned}
        \end{equation}
    \end{enumerate}
\end{definition}
Given this discrete formulation, the discrete energy norm on $E_h$ is derived as
\begin{equation*}
    \begin{aligned}
        \| \bm{v}_h \|_{E_h}:=(\sum_{K\in \mathcal{T}_h}\| \bm{u}_h \|_{L^2(K)}^2+\|\nabla\times \bm{u}_h \|_{L^2(K)}^2+\|\nabla\times\nabla\times \bm{u}_h \|_{L^2(K)}^2\\
        +\sum_{F \in \mathcal{F}_h}\frac{\tau}{h_F}\| \bm{n}_F\times[\![\nabla\times  \bm{v}_h ]\!]\|_{L^2(F)}^2)^{1/2}.
    \end{aligned}
\end{equation*}
We denote $ V_h := \{ \bm{v}_h \in E_h | b_h(q_h, \bm{v}_h )=0,\;\forall q_h\in Q_h\} $. As in (4.7) of \cite{chen2021analysis},
\begin{equation}\label{energy estimate}
    \| \bm{v}_h \|_{E_h}^2 \le C \sum_{K\in \mathcal{T}_h} \| \nabla\times\nabla\times \bm{u}_h \|_{L^2(K)}^2 + \sum_{F \in \mathcal{F}_h}\frac{\tau}{h_F}\| \bm{n}_F\times[\![\nabla\times  \bm{v}_h ]\!]\|_{L^2(F)}^2, \quad \forall \bm{v}_h \in V_h.
\end{equation}
\begin{theorem}\label{well-posed}
    For $ \tau > 0 $ sufficiently large, the numerical solution to \ref{ip mtd}, $ ( \bm{u}_h ,p_h) $, exists uniquely, with the energy estimates 
    $$ \| \bm{v}_h \|_{E_h} \le C \| \bm{f} \|_{L^2(\Omega)}; \quad \| p_h \|_{H^1_0(\Omega)} \le C \| \bm{f} \|_{L^2(\Omega)}. $$
\end{theorem}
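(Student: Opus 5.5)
We treat \ref{ip mtd} as a saddle-point problem on $E_h \times Q_h$ and apply the discrete Babuška–Brezzi theory. This needs two ingredients: coercivity of $a_h$ on the discrete kernel $V_h$ in the norm $\|\cdot\|_{E_h}$, and a uniform inf-sup condition for $b_h$. Existence, uniqueness and the stated bounds then follow. In finite dimensions existence is equivalent to uniqueness, and the constants in the bounds depend only on the coercivity, inf-sup and continuity constants and on $\|\bm f\|_{L^2(\Omega)}$.

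\emph{Coercivity on $V_h$.} For $\bm v_h \in V_h$ we write
\[
a_h(\bm v_h,\bm v_h)=\sum_K (A(\nabla\times)^2\bm v_h,(\nabla\times)^2\bm v_h)_K-2\sum_F\langle\{\!\{A(\nabla\times)^2\bm v_h\}\!\},\bm n_F\times[\![\nabla\times\bm v_h]\!]\rangle_F+\sum_F\frac{\tau}{h_F}\|\bm n_F\times[\![\nabla\times\bm v_h]\!]\|_{L^2(F)}^2 .
\]
The volume term is bounded below by $\alpha\sum_K\|(\nabla\times)^2\bm v_h\|_{L^2(K)}^2$, by the ellipticity of $A$. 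For the consistency term we use that $A\in L^\infty$ and that $(\nabla\times)^2\bm v_h$ is a polynomial on each $K$. The discrete trace inequality $\|\phi\|_{L^2(F)}\le C h_F^{-1/2}\|\phi\|_{L^2(K)}$ then gives
\[
\Big|\langle\{\!\{A(\nabla\times)^2\bm v_h\}\!\},\bm n_F\times[\![\nabla\times\bm v_h]\!]\rangle_F\Big|\le C\|A\|_\infty\sum_{K\in\mathcal T_F}\|(\nabla\times)^2\bm v_h\|_{L^2(K)}\,h_F^{-1/2}\|\bm n_F\times[\![\nabla\times\bm v_h]\!]\|_{L^2(F)} .
\]
This holds even if $A$ jumps inside $K$, since we only use the pointwise bound $|A\xi|\le\|A\|_\infty|\xi|$ after the trace inequality has been applied to the polynomial. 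Young's inequality and shape regularity (each cell has four faces) absorb this term: for $\tau$ large,
\[
a_h(\bm v_h,\bm v_h)\ge \tfrac{\alpha}{2}\sum_K\|(\nabla\times)^2\bm v_h\|_{L^2(K)}^2+\tfrac12\sum_F\frac{\tau}{h_F}\|\bm n_F\times[\![\nabla\times\bm v_h]\!]\|_{L^2(F)}^2 .
\]
Combined with \ref{energy estimate}, the discrete Friedrichs-type inequality from \cite{chen2021analysis}, this yields $a_h(\bm v_h,\bm v_h)\ge c\|\bm v_h\|_{E_h}^2$ on $V_h$. The same trace argument gives continuity $|a_h(\bm u_h,\bm v_h)|\le C\|\bm u_h\|_{E_h}\|\bm v_h\|_{E_h}$ on all of $E_h$.

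\emph{Inf-sup for $b_h$.} This is the step we expect to be the main obstacle, since it must hold uniformly in $h$. We use the discrete de Rham structure with $k\ge 2$: $\nabla Q_h\subset E_h$, because the Lagrange space of degree $k+1$ maps into Nédélec fields of degree $k$ with vanishing tangential trace. Given $q_h\in Q_h$, we take $\bm v_h=\nabla q_h$. Then $\nabla\times\bm v_h=0$, so every curl term and every jump term in $\|\bm v_h\|_{E_h}$ vanishes, and $\|\bm v_h\|_{E_h}=\|\nabla q_h\|_{L^2}$. Moreover $b_h(q_h,\bm v_h)=\|\nabla q_h\|_{L^2}^2$. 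Hence the inf-sup constant is $1$ in the norm $\|\nabla q_h\|_{L^2}$, which is equivalent to the $H^1_0$ norm by Poincaré's inequality. The only point to check is that $\nabla Q_h\subset E_h$ and that this inclusion needs no subordination of the mesh to $\{\Omega_i\}$; neither the spaces nor $b_h$ involve $A$, so this holds.

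\emph{Stability bounds.} With coercivity constant $c$, inf-sup constant $\beta=1$ and continuity constant $C$, the standard Brezzi estimate gives
\[
\|\bm u_h\|_{E_h}+\|p_h\|_{H^1_0}\le C\sup_{\bm v_h\in E_h}\frac{(\bm f,\bm v_h)}{\|\bm v_h\|_{E_h}}\le C\|\bm f\|_{L^2(\Omega)},
\]
where the last step uses $\|\bm v_h\|_{L^2}\le\|\bm v_h\|_{E_h}$. The bound can also be obtained directly. First test the second equation of \ref{ip mtd}, which shows $\bm u_h\in V_h$. Then test the first equation with $\bm v_h=\bm u_h$; the $b_h$ term vanishes, and coercivity gives $\|\bm u_h\|_{E_h}\le C\|\bm f\|_{L^2}$. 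Finally test with $\bm v_h=\nabla p_h$: since $\nabla\times\nabla p_h=0$, $a_h(\bm u_h,\nabla p_h)=0$, so $\|\nabla p_h\|^2=(\bm f,\nabla p_h)$ and therefore $\|p_h\|_{H^1_0}\le C\|\bm f\|_{L^2}$. Uniqueness follows from the same computation with $\bm f=0$.
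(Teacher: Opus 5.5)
Your proposal is correct and is essentially the argument the paper relies on by deferring to Theorem 4.5 of \cite{chen2021analysis}. That argument runs as follows: coercivity of $a_h$ on $V_h$ comes from the polynomial trace inequality, Young's inequality with $\tau$ large, and the discrete Friedrichs bound \ref{energy estimate}; the uniform inf-sup condition for $b_h$ comes from testing with $\nabla q_h \in E_h$; and the bounds then follow from Babu\v{s}ka--Brezzi, or from the direct testing with $\bm{u}_h$ and $\nabla p_h$ that you give. Your pointwise bound $|A\xi|\le\|A\|_{\infty}|\xi|$, applied on faces before the polynomial trace inequality, is exactly the extra observation needed for the piecewise constant $A$ on meshes not aligned with $\{\Omega_i\}$.
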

The proof is similar to that of \cite[Theorem 4.5]{chen2021analysis}.

\subsection{Remarks on the Finite Element Space}
Before analyzing the $ H (\text{curl}) $ conforming method, it is necessary that we make some remarks on the properties of these finite elements. N{\'e}d{\'e}lec has constructed two families of $ H (\text{curl}) $ conforming elements on tetrahedra in \cite{nedelec1980mixed} and \cite{nedelec1986new}, now known as the first and second kind N{\'e}d{\'e}lec family, respectively. Analysis on these finite elements has been carried out in \cite{alonso1999optimal}, \cite{monk2003finite}, and \cite{boffi2013mixed}, etc. Specifically, the finite element space for the first kind N{\'e}d{\'e}lec family is
$$ N_k(K)=[\mathcal{P}_k(K)]^3 \oplus S_{k+1} \text{ where } S_{k+1} := \{ \bm{p} \in \mathcal{P}^H_{k+1}(K) | \overset{\to}{x} \cdot \bm{p} = 0 \}, \; k\ge 0 $$
(or $\mathcal{R}_{k+1}$ in some literature); while that of the second kind N{\'e}d{\'e}lec family is $ [\mathcal{P}_k(K)]^3 $, $ k \ge 1 $. Nevertheless, the interpolation operators onto each space, $ \Pi^{\text{curl; I}}_{k} $ and $ \Pi^{\text{curl; II}}_{k} $, satisfy:
\begin{theorem}
    $ \forall \bm{u} \in H(\text{curl};K) $ such that both $ \bm{u} $ and $ \nabla \times \bm{u} $ belong to $ [H^s(K)]^3 $, $ \frac{1}{2} < s \le k $, 
    \begin{equation}\label{Nedelec est 1}
        \begin{aligned}
            \| \bm{u} - \Pi^{\text{curl; I}}_{k} (\bm{u}) \|_{L^2(K)} & \le C h_K^s ( \| \bm{u} \|_{H^s(K)} + h_K \| \nabla \times \bm{u} \|_{H^s(K)}) ; \\
            \| \nabla \times \bm{u} - \nabla \times \Pi^{\text{curl; I}}_{k} (\bm{u}) \|_{L^2(K)} & \le C h_K^s \| \nabla \times \bm{u} \|_{H^s(K)}. \\
        \end{aligned}
    \end{equation}
    The estimates are also valid for $ \Pi^{\text{curl; II}}_{k} $
\end{theorem}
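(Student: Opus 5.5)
The plan is the standard reference-element argument: pull back to the reference tetrahedron, use the Bramble--Hilbert lemma there, and scale back. Fix $K$ and let $F_K:\hat K\to K$, $F_K(\hat x)=B_K\hat x+b_K$, be the affine map. I will use the covariant Piola transform $\bm{u}\circ F_K=B_K^{-T}\hat{\bm{u}}$. Under it,
\[
(\nabla\times \bm{u})\circ F_K=\frac{1}{\det B_K}\,B_K\,\hat\nabla\times\hat{\bm{u}},
\]
which is the contravariant Piola transform of the curl. Both N\'ed\'elec interpolants commute with these transforms: $\widehat{\Pi^{\text{curl}}_k\bm{u}}=\hat\Pi^{\text{curl}}_k\hat{\bm{u}}$. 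This holds because the edge, face and cell moments defining the degrees of freedom are mapped into the corresponding moments on $\hat K$. It is therefore enough to prove
\[
\|\hat{\bm{u}}-\hat\Pi\hat{\bm{u}}\|_{L^2(\hat K)}\le C\bigl(|\hat{\bm{u}}|_{H^s(\hat K)}+|\hat\nabla\times\hat{\bm{u}}|_{H^s(\hat K)}\bigr),
\qquad
\|\hat\nabla\times(\hat{\bm{u}}-\hat\Pi\hat{\bm{u}})\|_{L^2(\hat K)}\le C|\hat\nabla\times\hat{\bm{u}}|_{H^s(\hat K)},
\]
and then scale back. The scaling uses shape regularity, $\|B_K\|\le Ch_K$, $\|B_K^{-1}\|\le Ch_K^{-1}$ and $|\det B_K|\sim h_K^3$. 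The seminorm scalings $|\hat{\bm{u}}|_{H^s}\le Ch_K^{s+1-3/2}|\bm{u}|_{H^s(K)}$ and $\|\bm{u}\|_{L^2(K)}\le Ch_K^{-1+3/2}\|\hat{\bm{u}}\|_{L^2}$ give $h_K^s|\bm{u}|_{H^s(K)}$. The curl term picks up one extra power of $h_K$, which produces $h_K^{s+1}|\nabla\times\bm{u}|_{H^s(K)}$. For fractional $s$ the chain of Sobolev--Slobodeckij scalings is checked directly from the double-integral definition.

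\textbf{Step 1: the interpolant is well defined and bounded.} The edge moments $\int_e \hat{\bm{u}}\cdot\bm{t}\,q$ are the hard part, since for $s\le 1$ the trace of $\hat{\bm{u}}$ on an edge does not exist. I would use the classical device (Amrouche--Bernardi--Dauge--Girault, Alonso--Valli, Monk Lemma 5.38): for $\hat{\bm{u}}\in H^s(\hat K)$ with $s>1/2$ and $\hat\nabla\times\hat{\bm{u}}\in L^p(\hat K)$, $p>2$, the edge moments are bounded. The proof goes by Stokes' theorem on a face $f$: the tangential trace $\bm{n}\times\hat{\bm{u}}\in H^{s-1/2}(f)$, and the normal component of the curl on $f$, which is the surface curl of that tangential trace, is controlled. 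Here the hypothesis $\hat\nabla\times\hat{\bm{u}}\in H^s$ with $s>1/2$ gives $\bm{n}\cdot\hat\nabla\times\hat{\bm{u}}\in L^2(f)$ via the trace theorem, which is strong enough; this is exactly why $s>1/2$ is needed. Face and cell moments are bounded by ordinary trace and $L^2$ bounds. The result is
\[
\|\hat\Pi\hat{\bm{u}}\|_{H(\text{curl};\hat K)}\le C\bigl(\|\hat{\bm{u}}\|_{H^s}+\|\hat\nabla\times\hat{\bm{u}}\|_{H^s}\bigr).
\]

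\textbf{Step 2: Bramble--Hilbert.} For $s\le k$ the operator $I-\hat\Pi$ annihilates $[\mathcal{P}_{\lceil s\rceil-1}]^3\subset[\mathcal{P}_{k-1}]^3$, and both families contain this space. By Step 1 it is bounded on the space
\[
X=\{\hat{\bm{u}}\in H^s:\ \hat\nabla\times\hat{\bm{u}}\in H^s\}.
\]
The fractional Bramble--Hilbert (Deny--Lions) lemma in the Slobodeckij setting then gives the first estimate. Its polynomial-quotient inequality applies separately to $\hat{\bm{u}}$ and to $\hat\nabla\times\hat{\bm{u}}$, and subtracting a polynomial $\bm{q}$ changes the curl by the polynomial $\hat\nabla\times\bm{q}$, so both parts are handled.

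\textbf{Step 3: the sharper curl estimate.} Use the commuting diagram $\hat\nabla\times\hat\Pi^{\text{curl}}=\hat\Pi^{\text{div}}\hat\nabla\times$, where $\hat\Pi^{\text{div}}$ is the Raviart--Thomas (first kind) or BDM (second kind) interpolant of the matching degree. The divergence-space interpolant needs only the face moments of $\hat\nabla\times\hat{\bm{u}}\in H^s$, $s>1/2$, and it reproduces polynomials of degree $\ge\lceil s\rceil-1$; the second-kind degree range needs a quick check here, which I expect to work because $s\le k$. Bramble--Hilbert applied to $\hat\nabla\times\hat{\bm{u}}$ alone then gives the curl bound with only $|\hat\nabla\times\hat{\bm{u}}|_{H^s}$, and scaling yields the second estimate.

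The main obstacle is Step 1: controlling the edge degrees of freedom under only $H^s$, $s\in(1/2,1]$, regularity. I would cite Monk's Lemma 5.38 and Alonso--Valli rather than reprove it, noting that $\hat\nabla\times\hat{\bm{u}}\in H^s$ with $s>1/2$ suffices in place of the $L^p$ condition, through the face-trace/Stokes argument above.
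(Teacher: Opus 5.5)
\textbf{Verdict.} Your plan is the standard route, but Step 2 has a genuine gap. Its stated justification does not give the reference-element estimate you need, and the missing piece is exactly what the sharp form of the bound requires.

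\textbf{Where the proposal stands.} The paper gives no proof of this statement; it quotes it from N\'ed\'elec, Alonso--Valli and Monk. Your outline (covariant Piola, bounded edge moments, Deny--Lions on $\hat K$, commuting diagram for the curl) is the standard argument behind those citations. Steps 1 and 3 are sound. In Step 1 you can cite Monk's Lemma 5.38 verbatim, since $H^s(\hat K)\hookrightarrow L^{6/(3-2s)}(\hat K)$ and $6/(3-2s)>2$.

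\textbf{The gap in Step 2.} Applying Deny--Lions separately gives two polynomials:
\begin{itemize}
\item $\bm q_1\in[\mathcal P_{\lceil s\rceil-1}]^3$ with $\|\hat{\bm u}-\bm q_1\|_{H^s}\le C|\hat{\bm u}|_{H^s}$;
\item $\bm q_2$ with $\|\hat\nabla\times\hat{\bm u}-\bm q_2\|_{H^s}\le C|\hat\nabla\times\hat{\bm u}|_{H^s}$.
\end{itemize}
But the Step 1 bound must be applied to a single difference $\hat{\bm u}-\bm q$, which requires $\bm q_2=\hat\nabla\times\bm q$. Nothing couples the two choices. In fact $\hat\nabla\times\bm q_1$ has degree $\lceil s\rceil-2$. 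For $s\in(1/2,1]$ it vanishes, and you are left with $\|\hat\nabla\times\hat{\bm u}\|_{H^s}$, whose $L^2$ part (e.g.\ a constant curl) is invisible to $|\hat\nabla\times\hat{\bm u}|_{H^s}$.

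This is not cosmetic. If you keep the full norm of the curl on $\hat K$, scaling leaves a term $h_K\|\nabla\times\bm u\|_{L^2(K)}$. For $s<1$, absorbing it into $h_K^s(\|\bm u\|_{H^s(K)}+h_K\|\nabla\times\bm u\|_{H^s(K)})$ needs precisely the estimate that is missing.

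\textbf{The missing ingredient.} The part of $\hat\nabla\times\hat{\bm u}$ not seen by $|\hat\nabla\times\hat{\bm u}|_{H^s}$ (its component in $[\mathcal P_{\lceil s\rceil-1}]^3$) is controlled by $|\hat{\bm u}|_{H^s}$. For $s\le 1$ this is just the identity $\int_{\hat K}\hat\nabla\times\hat{\bm u}\cdot\bm\phi=\int_{\hat K}(\hat{\bm u}-\bm c)\cdot\hat\nabla\times\bm\phi$ for a bump $\bm\phi\in[C^\infty_c(\hat K)]^3$, with $\bm c$ the mean of $\hat{\bm u}$, followed by Poincar\'e. There are two standard ways to use it.
\begin{enumerate}
\item Take $\bm q$ to be the averaged Taylor polynomial of $\hat{\bm u}$ of degree $\lceil s\rceil$, one more than you used.
\begin{itemize}
\item Then $\hat\nabla\times\bm q$ is the averaged Taylor polynomial of degree $\lceil s\rceil-1$ of $\hat\nabla\times\hat{\bm u}$.
\item The degree-$\lceil s\rceil$ part of $\bm q$ is bounded by $C|\hat{\bm u}|_{H^s}$ after one integration by parts, so $\|\hat{\bm u}-\bm q\|_{H^s}\le C|\hat{\bm u}|_{H^s}$ still holds.
\item $\hat\Pi$ still reproduces $\bm q$, because $\lceil s\rceil\le k$ and both families contain $[\mathcal P_k]^3$.
\end{itemize}
\item Apply the Peetre--Tartar lemma on $X$ to the joint seminorm $|\hat{\bm u}|_{H^s}+|\hat\nabla\times\hat{\bm u}|_{H^s}$. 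Its kernel is exactly $[\mathcal P_{\lceil s\rceil-1}]^3$, and compactness comes from the embedding $H^s\hookrightarrow H^{\lceil s\rceil-1}$.
\end{enumerate}
Either route gives $\|\hat{\bm u}-\hat\Pi\hat{\bm u}\|_{L^2(\hat K)}\le C(|\hat{\bm u}|_{H^s}+|\hat\nabla\times\hat{\bm u}|_{H^s})$. Your scaling then delivers the first estimate.
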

Since we will be applying the second kind exclusively in this work, we shall omit the subscript II, specifying hereafter that $ \Pi^{\text{curl}}_{k} $ stands for the second kind N{\'e}d{\'e}lec interpolation. In addition, $ \Pi^{\text{curl}}_{h,k} $ will be this interpolation implemented cell-wisely on $ \mathcal{T}_h $. \par
In addition, the lines in (5.9), (5.14) and (5.12) in \cite{chen2021analysis} imply that the following estimates of higher ordered terms hold:
\begin{lemma}\label{Nedelec est 2}
    \begin{enumerate}
        \item $ \forall \bm{u} \in H(\text{curl};K) $ such that $ \nabla \times \bm{u} \in [H^{t_1}(K)]^3 $, $ 1 \le t_1 \le k+1 $,
        \begin{align*}
            \| (\nabla \times)^2 (\bm{u} -\Pi^{\text{curl}}_{h,k} (\bm{u})) \|_{L^2(K)} & \le C h_K^{t_1-1} \| \nabla \times \bm{u} \|_{H^{t_1}(K)}; \\
            h_F^{\frac{1}{2}} \|  \bm{n}_F \times [\![\nabla\times (\bm{u} -\Pi^{\text{curl}}_{h,k} (\bm{u})) ]\!]  \|_{L^2(F)} & \le C h_K^{t_1-1} \| \nabla \times \bm{u} \|_{H^{t_1}(K)}. \\
        \end{align*}
        \item In addition, if we also have $ (\nabla \times)^2 \bm{u} \in [H^{t_2}(K)]^3 $, $ \frac{1}{2} < t_2 \le k $, then
        \begin{equation*}
            h_F^{\frac{1}{2}} \| \{\!\{A(\nabla\times)^2 (\bm{u} -\Pi^{\text{curl}}_{h,k} (\bm{u}))\}\!\} \|_{L^2(F)}  \le C ( h_K^{t_1-1} \| \nabla \times \bm{u} \|_{H^{t_1-1}(K)} + h_K^{t_2} \| (\nabla \times)^2 \bm{u} \|_{H^{t_2}(K)}  ).
        \end{equation*}
    \end{enumerate}
\end{lemma}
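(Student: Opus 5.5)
The plan is to reduce everything to a reference tetrahedron and use two ingredients: polynomial invariance of the second kind N\'ed\'elec interpolation, and a Bramble--Hilbert argument in the relevant fractional Sobolev norms. Fix $K$ and write $\bm{e} := \bm{u} - \Pi^{\text{curl}}_{k}\bm{u}$. Because $\Pi^{\text{curl}}_k$ reproduces $[\mathcal{P}_k(K)]^3$, we have $\bm{e} = (I-\Pi^{\text{curl}}_k)(\bm{u}-\bm{p})$ for every $\bm{p}\in[\mathcal{P}_k(K)]^3$. We use the commuting diagram property $\nabla\times\Pi^{\text{curl}}_k = \Pi^{\text{div}}_{k-1}\nabla\times$ with the BDM-type divergence interpolation. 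This gives $\nabla\times\bm{e} = (I-\Pi^{\text{div}}_{k-1})\nabla\times\bm{u}$. The estimates for $\nabla\times\bm{e}$ then follow from estimates for $\bm{w}:=\nabla\times\bm{u}\in[H^{t_1}(K)]^3$ alone. Since $t_1\ge1>1/2$, $\Pi^{\text{div}}_{k-1}\bm{w}$ is well defined.

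For part 1, I would map to the reference element $\hat K$ by the Piola transform for $H(\text{div})$. On $\hat K$, the operator $\hat{\bm w}\mapsto \nabla\times(I-\hat\Pi^{\text{div}}_{k-1})\hat{\bm w}$ is bounded from $H^{t_1}(\hat K)$ to $L^2(\hat K)$, since $t_1\ge 1$ makes the face moments continuous. This operator annihilates $[\mathcal{P}_{k}]^3$, because $\Pi^{\text{div}}_{k-1}$ on the second kind (BDM) space $[\mathcal{P}_{k-1}]^3$ only reproduces $\mathcal{P}_{k-1}$; so I will use $\mathcal P_{t_1-1}$-type invariance via the curl. Concretely, $\nabla\times(\bm{w}-\Pi^{\text{div}}_{k-1}\bm{w})$ is unchanged when $\bm{w}$ is modified by any element of $[\mathcal{P}_{k-1}]^3$, and a degree-$k$ $\bm{w}$ differs from its $\mathcal{P}_{k-1}$ part only by homogeneous terms. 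The Deny--Lions/Bramble--Hilbert lemma in fractional form then gives $\|\nabla\times(\hat I-\hat\Pi)\hat{\bm w}\|_{L^2(\hat K)}\le C|\hat{\bm w}|_{H^{t_1}(\hat K)}$ for $1\le t_1\le k$. Scaling back gives $h_K^{t_1-1}$: one power is lost for the extra derivative, with $h_K^{t_1}$ from the seminorm. The endpoint $t_1=k+1$ needs the commuting property with the $L^2$-projection of $\nabla\times\bm w$, and I expect this case to need a separate short argument. For the jump term, apply the discrete trace inequality $h_F^{1/2}\|\phi\|_{L^2(F)}\le C(\|\phi\|_{L^2(K)}+h_K|\phi|_{H^1(K)})$ to $\phi=\nabla\times\bm{e}$ on each of $K^\pm$; both pieces are controlled by the same scaling argument, with the $H^1$ seminorm of the error estimated as in the first bound.

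For part 2, take $\phi=A(\nabla\times)^2\bm e$. Here $A$ is constant on each $K$ in the setting where the lemma is used, or is bounded and otherwise factored out. The face trace requires more than $L^2$, and we have $(\nabla\times)^2\bm{u}\in H^{t_2}$ with $t_2>1/2$. I would split
$$(\nabla\times)^2\bm{e} = \big((\nabla\times)^2\bm u - \pi_{k-2}(\nabla\times)^2\bm u\big) + \big(\pi_{k-2}(\nabla\times)^2\bm u - (\nabla\times)^2\Pi^{\text{curl}}_k\bm u\big),$$
where $\pi_{k-2}$ is the $L^2$ projection onto polynomials. The first piece uses the fractional trace inequality $\|\psi\|_{L^2(F)}\le C(h_K^{-1/2}\|\psi\|_{L^2(K)}+h_K^{t_2-1/2}|\psi|_{H^{t_2}(K)})$ together with the approximation of $\pi_{k-2}$, and gives $h_K^{t_2}\|(\nabla\times)^2\bm u\|_{H^{t_2}(K)}$. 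The second piece is polynomial, so the discrete inverse trace inequality applies. Its $L^2(K)$ norm is bounded by the triangle inequality through the first piece and part 1, which produces the $h_K^{t_1-1}$ term.

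I expect the main obstacle to be the fractional trace inequality with the correct $h$-scaling for $1/2<t_2<1$. I would prove it on $\hat K$ using the Sobolev--Slobodeckij norm and then scale. The seminorm scales as $h^{t_2-3/2}$, which needs care to match the factor $h_K^{t_2}$.
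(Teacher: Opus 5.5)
Your main line ($\nabla\times\Pi^{\text{curl}}_k=\Pi^{BDM}_{k-1}\nabla\times$, then a Bramble--Hilbert bound on the reference element, then trace inequalities) is the standard argument. The paper gives no proof of its own and only points to (5.9), (5.12), (5.14) of Chen--Qiu--Xu. For $1\le t_1\le k$ your route gives part 1, provided you scale through the full $H^1$ seminorm of $\bm w-\Pi^{BDM}_{k-1}\bm w$, since the curl does not commute with the contravariant Piola map. Your trace step actually yields the $h_F^{-1/2}$ weighting used later in Section 3, which is stronger than the printed $h_F^{1/2}$.

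The genuine gap is the endpoint. The ``separate short argument'' you plan for $t_1=k+1$ cannot exist, because the first estimate of part 1 is false for every $t_1\in(k,k+1]$. The faulty step is your claim that the error operator ``annihilates $[\mathcal P_k]^3$''. It annihilates only $[\mathcal P_{k-1}]^3$, and the homogeneous degree-$k$ part of $\nabla\times\bm u$ leaves an error that cannot be removed, simply because $(\nabla\times)^2\Pi^{\text{curl}}_k\bm u\in[\mathcal P_{k-2}(K)]^3$. Take $\bm u=(0,0,-y^{k+1}/(k+1))$, so $\nabla\times\bm u=(-y^k,0,0)$ and $(\nabla\times)^2\bm u=(0,0,ky^{k-1})$. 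By shape regularity, $\|(\nabla\times)^2(\bm u-\Pi^{\text{curl}}_k\bm u)\|_{L^2(K)}\ge\inf_{q\in\mathcal P_{k-2}(K)}\|ky^{k-1}-q\|_{L^2(K)}\ge c\,h_K^{k-1}|K|^{1/2}$. On any bounded region, however, $h_K^{t_1-1}\|\nabla\times\bm u\|_{H^{t_1}(K)}\le C h_K^{t_1-1}|K|^{1/2}$, so the ratio grows like $h_K^{k-t_1}$. No identity $\nabla\times\Pi^{BDM}_{k-1}=\pi_{k-2}\nabla\times$ is available to rescue this; the example below violates it. The range must therefore be $1\le t_1\le k$, which covers the paper's uses (e.g.\ $t_1=2\le k$ in Section 3). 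Your proof of part 2 runs through part 1 and inherits this restriction.

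Part 2 also contains a silent mismatch. Bounding the polynomial piece through part 1 produces $h_K^{t_1-1}\|\nabla\times\bm u\|_{H^{t_1}(K)}$, not the printed $h_K^{t_1-1}\|\nabla\times\bm u\|_{H^{t_1-1}(K)}$. You should say explicitly that you prove the $H^{t_1}$ version, which is the one Section 3 actually uses, because the printed one is false. Under $\bm u_h(x)=h\hat{\bm u}(x/h)$, the left side and the $t_2$-term scale like $h^{1/2}$, while the $t_1$-term is $O(h^{3/2})$. The printed bound would thus force the reference face error to be controlled by $|(\nabla\times)^2\hat{\bm u}|_{H^{t_2}(\hat K)}$ alone. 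That fails: for $k=2$ and $\hat{\bm u}=(0,0,3x^2y-y^3)$ we have $\nabla\times\hat{\bm u}=\nabla(x^3-3xy^2)$ and $(\nabla\times)^2\hat{\bm u}=0$. Yet on the tetrahedron with vertices $0$, $(1,0,0)$, $(0,1,0)$, $(0,0,1)$, the BDM$_1$ interpolant of $\nabla\times\hat{\bm u}$ is $(\tfrac{3}{10}+\tfrac{6}{5}x-\tfrac{12}{5}y,\,-\tfrac{6}{5}y,\,0)$, so $(\nabla\times)^2\Pi^{\text{curl}}_2\hat{\bm u}=(0,0,\tfrac{12}{5})\neq0$.

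Two smaller repairs are needed.
\begin{enumerate}
\item The bound $\|(I-\pi_{k-2})(\nabla\times)^2\bm u\|_{L^2(K)}\le Ch_K^{t_2}|(\nabla\times)^2\bm u|_{H^{t_2}(K)}$ holds only for $t_2\le k-1$. For $t_2\in(k-1,k]$, bound that term instead by $\|(\nabla\times)^2(\bm u-\Pi^{\text{curl}}_k\bm u)\|_{L^2(K)}$ (best approximation in $[\mathcal P_{k-2}(K)]^3$) and invoke part 1.
\item The commuting diagram presupposes that $\Pi^{\text{curl}}_k\bm u$ is defined. That requires some regularity of $\bm u$ itself (e.g.\ $\bm u\in[H^{1/2+\delta}(K)]^3$, as in the paper's setting), not merely $\bm u\in H(\text{curl};K)$.
\end{enumerate}
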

Next, as we shall see, the properties of the $ H (\text{div}) $ conforming finite elements will also appear vital. Constructions of such finite elements include the Raviart-Thomas (RT) elements and the Brezzi–Douglas–Marini (BDM) elements. The finite element space for RT elements is 
$$ RT_k(K)=[\mathcal{P}_k(K)]^3 \oplus \overset{\to}{x} \cdot \mathcal{P}^H_k(K), k \ge 0 $$
(or $\mathcal{D}_{k+1}$ in some literature); while that of BDM elements is $[\mathcal{P}_k(K)^3]$, $ k \ge 1 $. Research on these spaces can be found in \cite{monk2003finite}, \cite{boffi2013mixed}, etc. Specifically,
\begin{theorem}
\begin{enumerate}
    \item Denote by $\Pi^{RT}_k$ the Raviart-Thomas interpolation onto $RT_k(K)$. Then for any $ r \in (\frac{1}{2},k+1] $,
    \begin{equation}\label{RT est}
        \|(I-\Pi^{RT}_k)v\|_{L^2(K)}\le Ch_K^r|v|_{H^r(K)},\;\forall v\in H^r(K).
    \end{equation}
    \item Denote by $\Pi^{BDM}_k$ the Brezzi–Douglas–Marini interpolation onto $[\mathcal{P}_k(K)^3]$. Then for any $ r \in (\frac{1}{2},k+1] $,
    \begin{equation}\label{BDM est}
        \|(I-\Pi^{BDM}_k)v\|_{L^2(K)}\le Ch_K^r|v|_{H^r(K)},\;\forall v\in H^r(K).
    \end{equation}
\end{enumerate}
\end{theorem}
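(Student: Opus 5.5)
The plan is to prove both interpolation estimates by the standard route: define the interpolant through its degrees of freedom, show those degrees of freedom are bounded on $H^r$ for $r>\frac12$, and conclude with a scaling plus Bramble--Hilbert argument on the reference tetrahedron $\hat K$, transported by the contravariant Piola map. The BDM and RT cases are handled identically, with only the interior moments and the reproduced polynomial space differing.

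\textbf{Degrees of freedom.} For $\Pi^{RT}_k$ the degrees of freedom are the face moments $\int_F \bm{v}\cdot\bm{n}_F\, q$ for $q\in\mathcal{P}_k(F)$, together with the interior moments $\int_K \bm{v}\cdot\bm{q}$ for $\bm{q}\in[\mathcal{P}_{k-1}(K)]^3$. For $\Pi^{BDM}_k$ the face moments are the same, and the interior moments are taken against the Nédélec space $N_{k-2}(K)$ (absent when $k=1$). Unisolvence on the reference element is classical, so it is assumed.

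\textbf{Boundedness on the reference element.} Fix $r\in(\frac12,k+1]$. The interior moments are bounded by $\|\hat{\bm{v}}\|_{L^2(\hat K)}$. For a face moment, the trace theorem $H^r(\hat K)\to L^2(\partial\hat K)$ holds precisely because $r>\frac12$, and it gives
\[
\Big|\int_{\hat F}\hat{\bm{v}}\cdot\hat{\bm{n}}\,\hat q\Big|\le C\|\hat{\bm{v}}\|_{H^r(\hat K)}\|\hat q\|_{L^2(\hat F)}.
\]
Hence $\hat\Pi$ is bounded from $[H^r(\hat K)]^3$ to a finite-dimensional space normed in $L^2$, and $\|(I-\hat\Pi)\hat{\bm{v}}\|_{L^2(\hat K)}\le C\|\hat{\bm{v}}\|_{H^r(\hat K)}$.

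\textbf{Bramble--Hilbert step.} Both interpolants reproduce $[\mathcal{P}_k]^3$, since $RT_k\supset[\mathcal{P}_k]^3$. Let $m=\lceil r\rceil-1\le k$. Then $I-\hat\Pi$ annihilates $[\mathcal{P}_m]^3$, and the fractional Bramble--Hilbert (Deny--Lions) lemma yields
\[
\|(I-\hat\Pi)\hat{\bm{v}}\|_{L^2(\hat K)}\le C\,|\hat{\bm{v}}|_{H^r(\hat K)}.
\]
For non-integer $r$ this uses the Sobolev--Slobodeckij seminorm defined above.

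\textbf{Scaling.} Write $\bm{v}=\frac{1}{\det B}B\hat{\bm{v}}\circ F_K^{-1}$, where $F_K(\hat x)=B\hat x+b$. The Piola map commutes with the interpolant, because face-normal moments and the interior moments transform compatibly; this is the standard fact for both families. Shape regularity gives $\|B\|\le Ch_K$, $\|B^{-1}\|\le Ch_K^{-1}$ and $|\det B|\sim h_K^3$. Therefore $\|\bm{w}\|_{L^2(K)}\le Ch_K^{-1/2}\|\hat{\bm{w}}\|_{L^2(\hat K)}$ and $|\hat{\bm{v}}|_{H^r(\hat K)}\le Ch_K^{r+1/2}|\bm{v}|_{H^r(K)}$. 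For fractional $r$ the second bound follows from the change of variables in the double integral, which produces the factor $h_K^{\sigma}\cdot h_K^{-3/2}\cdot h_K^{3}\cdots$ consistently. Combining the two gives $Ch_K^r|\bm{v}|_{H^r(K)}$.

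\textbf{Main obstacle.} The delicate point is the low range $\frac12<r<1$, where both the trace bound and the fractional Bramble--Hilbert inequality must be used with the Slobodeckij seminorm and scaled carefully. The plan is to verify the scaling of $|\cdot|_{W^{\sigma,2}}$ directly from the double-integral definition, and to invoke the Dupont--Scott version of Bramble--Hilbert, which covers fractional orders.
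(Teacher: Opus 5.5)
Your proposal is correct and is essentially the argument the paper relies on: the paper gives no independent proof but defers to \cite[Theorem 5.25]{monk2003finite}, whose proof uses the same three ingredients (trace-based boundedness of the degrees of freedom for $r>\frac12$, polynomial reproduction plus Bramble--Hilbert on $\hat K$, and Piola scaling). The paper also observes, as you do, that only the seminorm $|v|_{H^r(K)}$ is needed and that the argument carries over to BDM; your choice of interior moments against $N_{k-2}(K)$ and the fractional (Dupont--Scott) Bramble--Hilbert step supply the details the paper leaves to that citation.
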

This conclusion is available in \cite[Theorem 5.25]{monk2003finite}, which discussed the RT interpolation, but the proof passes to the BDM one. However, for some unknown reason, the author provided in the theorem statement an estimate using the full norm $ \| v \|_{H^r(K)} $, while its proof only requires the semi norm $ |v|_{H^r(K)} $. \par
The key connection between the $ H (\text{curl}) $ families and the $ H (\text{div}) $ families is the commutative properties. The following conclusion is available in \cite[Proposition 2]{nedelec1986new}, etc.
\begin{theorem}
    $ \forall \bm{u} \in H(\text{curl};K) $ such that $ \nabla \times \bm{u} \in [H^{t_1}(K)]^3 $, $ 1 \le t_1 $, 
    $$ \Pi^{BMD}_{k} (\nabla \times \bm{u}) = \nabla \times (\Pi^{\text{curl; II}}_{k}\bm{u}). $$
\end{theorem}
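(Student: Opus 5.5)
The plan is to prove the commuting property $\Pi^{BDM}_{k}(\nabla\times\bm{u}) = \nabla\times(\Pi^{\text{curl}}_{k}\bm{u})$ in the standard way. Both sides lie in $[\mathcal{P}_k(K)]^3$: the left by construction, and the right because the curl of a second-kind N\'ed\'elec field of degree $k$ is a polynomial field of degree at most $k-1$. Since the BDM degrees of freedom are unisolvent on $[\mathcal{P}_k(K)]^3$, it suffices to show that $\bm{w}:=\nabla\times\bm{u}$ and $\bm{w}_h:=\nabla\times\Pi^{\text{curl}}_k\bm{u}$ have the same BDM degrees of freedom. Then $\Pi^{BDM}_k\bm{w}=\Pi^{BDM}_k\bm{w}_h=\bm{w}_h$, the last equality because $\Pi^{BDM}_k$ reproduces $[\mathcal{P}_k(K)]^3$. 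The regularity $\nabla\times\bm{u}\in[H^{t_1}(K)]^3$ with $t_1\ge1$ guarantees that the face moments of $\bm{w}$ are well defined. It also guarantees, together with $\bm{u}\in H(\text{curl};K)$, that the edge moments of $\bm{u}$ make sense, possibly after a standard density argument. We work with $\bm{v}:=\bm{u}-\Pi^{\text{curl}}_k\bm{u}$ and must show that every BDM functional vanishes on $\nabla\times\bm{v}$.

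\textbf{Face moments.} The BDM face degrees of freedom are $\int_F(\nabla\times\bm{v})\cdot\bm{n}_F\,q\,ds$ for $q\in\mathcal{P}_k(F)$. On a face, $(\nabla\times\bm{v})\cdot\bm{n}_F=\text{curl}_F(\bm{v}_T)$, the surface curl of the tangential trace. Integrating by parts on $F$ gives
\[
\int_F \text{curl}_F(\bm{v}_T)\,q\,ds=\int_F \bm{v}_T\cdot\overrightarrow{\text{curl}}_F q\,ds+\sum_{e\subset\partial F}\pm\int_e \bm{v}\cdot\bm{t}_e\,q\,dl .
\]
The edge terms vanish because the second-kind N\'ed\'elec edge moments against $\mathcal{P}_k(e)$ are preserved. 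The face term vanishes because $\overrightarrow{\text{curl}}_F q\in[\mathcal{P}_{k-1}(F)]^2$ is exactly tested by the face degrees of freedom of $\Pi^{\text{curl; II}}_k$; those moments are against the BDM-type space $\mathcal{RT}$/$[\mathcal{P}_{k-2}(F)]^2$ dual, and the inclusion of $\overrightarrow{\text{curl}}_F\mathcal{P}_k(F)$ should be checked against Nédélec's definition.

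\textbf{Interior moments.} The BDM interior degrees of freedom are $\int_K\nabla\times\bm{v}\cdot\bm{q}\,dx$ with $\bm{q}$ in the N\'ed\'elec first-kind space $N_{k-2}(K)$. Integrating by parts gives
\[
\int_K \bm{v}\cdot\nabla\times\bm{q}\,dx+\int_{\partial K}(\bm{n}\times\bm{v})\cdot\bm{q}\,ds .
\]
We then show that $\nabla\times N_{k-2}(K)$ lies in the space tested by the interior N\'ed\'elec-II moments, which is $RT_{k-3}$-type, using the exact sequence property $\nabla\times N_{k-2}\subset RT_{k-2}$ restricted appropriately. We also show that the tangential traces of $\bm{q}$ are tested by the face degrees of freedom.

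\textbf{Expected obstacle.} The main obstacle is bookkeeping: matching precisely the polynomial spaces in Nédélec's second-kind degrees of freedom (edges $\mathcal{P}_k$, faces $RT_{k-1}(F)$, interior $N_{k-2}(K)$) with those of BDM (faces $\mathcal{P}_k(F)$, interior $N_{k-1}(K)$). This is exactly the content of \cite[Proposition 2]{nedelec1986new}, so the proof will follow that verification.
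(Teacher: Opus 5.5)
Your mechanism, showing that the BDM moments of $\nabla\times(\bm{u}-\Pi^{\text{curl}}_k\bm{u})$ vanish by integrating by parts on faces and in the cell, is the standard proof behind the result that the paper only cites from N\'ed\'elec. The gap is that you set it up at the wrong degree. At that degree the identity is false, so the two inclusions you postpone cannot be verified.

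You note yourself that $\nabla\times\Pi^{\text{curl}}_k\bm{u}\in[\mathcal{P}_{k-1}(K)]^3$. Since $\Pi^{BDM}_k$ reproduces $[\mathcal{P}_k(K)]^3$, the field $\bm{u}=(0,0,x_1^{k+1})$ gives $\Pi^{BDM}_k(\nabla\times\bm{u})=\nabla\times\bm{u}=(0,-(k+1)x_1^{k},0)$. This has exact degree $k$, so it cannot equal the right-hand side. The correct relation uses $\Pi^{BDM}_{k-1}$, and it is the one the paper actually uses later (``$\nabla\times\Pi^{\text{curl}}_{k}\bm{u}=\Pi^{BDM}_{k-1}(\nabla\times\bm{u})$'' in the polynomial-approximation proof).

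Your bookkeeping of the second-kind moments also hides the failure. For $[\mathcal{P}_k(K)]^3$ the moments are:
\begin{itemize}
\item edge moments against $\mathcal{P}_k(e)$;
\item face moments against $\mathcal{D}_{k-1}(F)=[\mathcal{P}_{k-2}(F)]^2\oplus\bm{x}\,\mathcal{P}^H_{k-2}(F)$, which is $RT_{k-2}(F)$ in the paper's indexing, not $RT_{k-1}(F)$;
\item interior moments against $\mathcal{D}_{k-2}(K)=RT_{k-3}(K)$, not $N_{k-2}(K)$.
\end{itemize}
The count $6(k+1)+4(k-1)(k+1)+\frac{1}{2}(k-2)(k-1)(k+1)=\frac{1}{2}(k+1)(k+2)(k+3)=3\dim\mathcal{P}_k(K)$ confirms this.

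Now take a BDM$_k$ face test $q\in\mathcal{P}_k(F)$ of exact degree $k$. The top-degree part of $\overrightarrow{\text{curl}}_F q$ is nonzero and divergence-free. The top-degree part $\bm{x}\,p$, $p\in\mathcal{P}^H_{k-2}(F)$, of an element of $\mathcal{D}_{k-1}(F)$ has divergence $k\,p$. Hence $\overrightarrow{\text{curl}}_F\mathcal{P}_k(F)\not\subset\mathcal{D}_{k-1}(F)$. Likewise, $\nabla\times N_{k-2}(K)$ contains every divergence-free homogeneous field of degree $k-2$, and none of these except $0$ lies in $RT_{k-3}(K)$. For $k=2$ there are no interior moments at all, yet $\nabla\times N_0(K)$ contains the constants.

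The fix is to run your argument for BDM of degree $k-1$, whose moments are against $\mathcal{P}_{k-1}(F)$ on faces and $N_{k-3}(K)$ in the cell. On faces, $\overrightarrow{\text{curl}}_F q\in[\mathcal{P}_{k-2}(F)]^2\subset\mathcal{D}_{k-1}(F)$, and the edge traces of $q$ lie in $\mathcal{P}_{k-1}(e)\subset\mathcal{P}_k(e)$. In the cell, $\nabla\times\bm{q}\in[\mathcal{P}_{k-3}(K)]^3\subset RT_{k-3}(K)$. The rotated tangential trace $\bm{q}\times\bm{n}_F$ lies in $\mathcal{D}_{k-2}(F)\subset\mathcal{D}_{k-1}(F)$. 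So every moment of $\nabla\times(\bm{u}-\Pi^{\text{curl}}_k\bm{u})$ vanishes.

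Separately, $\bm{u}\in H(\text{curl};K)$ with $\nabla\times\bm{u}\in[H^{t_1}(K)]^3$ does not make the edge moments meaningful, even ``after a standard density argument''. The field $\bm{u}=\nabla\phi$ with $\phi\in H^1(K)$ satisfies the hypothesis but has no tangential traces on edges. Density cannot help either, because $\Pi^{\text{curl}}_k$ is not bounded in $H(\text{curl};K)$. You need an extra assumption such as $\bm{u}\in[H^{\frac{1}{2}+\delta}(K)]^3$, the standard hypothesis for the N\'ed\'elec interpolant (cf.\ \cite{monk2003finite}), which the weak solution does satisfy.
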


\section{Convergence under No Extra Assumptions}

\subsection{Auxiliary Results}
For the purpose of analyzing this method, we present some auxiliary conclusions, which we find inconvenient to cite directly from the literature. Therefore, I tailored them to fit our framework, and this subsection is a list of proofs.\par
First, we impose discrete inverse inequalities of fractional ordered Sobolev semi-norms of polynomials. We recall that $ (K,P,\Sigma) $ is an affine generated finite element on $ K \in \mathcal{T}_h $ if there exists a reference element $ (\hat{K},\hat{P},\hat{\Sigma})$ together with an affine mapping $T_K: \hat{K} \to K$ in the form $ T_k(\hat{x}) = B_K \hat{x} + b_K $, where $B_K$ is a $ d\times d $ non-singular matrix.
\begin{lemma}\label{inv ineq}
     Assume that $\{\mathcal{T}_h\}$ is an arbitrary shape regular mesh sequence and $ (K,P,\Sigma) $ is an arbitrary affine generated finite element on $ K\in\mathcal{T}_h $. Assume in addition that $\hat{P}\subset\mathcal{P}_k(\hat{K)}$. Then $\forall p,q\in (1,\infty)$, $\forall m\le [s]$ where $[s]$ means the integer part, $\exists C>0$ such that 
    \begin{equation*}
        |v|_{W^{s,p}(K)}\le C h_K^{m-s-d(\frac{1}{q}-\frac{1}{p})}|v|_{W^{m,q}(K)},\quad\forall v\in P.
    \end{equation*}
\end{lemma}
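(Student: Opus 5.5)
The plan is the standard scaling argument: pull back to the reference element, use equivalence of (semi)norms on the finite-dimensional space $\hat P$, and push forward. Write $\hat v = v\circ T_K$. Shape regularity gives $\|B_K\|\le C h_K$, $\|B_K^{-1}\|\le C h_K^{-1}$ and $|\det B_K|\simeq h_K^d$.

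\textbf{Step 1 (pull back the fractional seminorm).} Let $m_s=[s]$ and $\sigma=s-m_s$. For a multi-index $|\alpha|=m_s$, the chain rule writes $\partial^\alpha v$ as a combination of $\hat\partial^\beta \hat v\circ T_K^{-1}$ with $|\beta|=m_s$ and coefficients bounded by $\|B_K^{-1}\|^{m_s}$. In the double integral defining $|\cdot|_{W^{\sigma,p}(K)}$, substitute $x=T_K\hat x$, $y=T_K\hat y$. Then $|x-y|\ge C h_K|\hat x-\hat y|$ and $dx\,dy=|\det B_K|^2 d\hat x\,d\hat y$. This yields
\[
|v|_{W^{s,p}(K)}\le C\, h_K^{-m_s}\, h_K^{-(d+\sigma p)/p}\, |\det B_K|^{2/p}\, |\hat v|_{W^{s,p}(\hat K)} \le C h_K^{-s+d/p}|\hat v|_{W^{s,p}(\hat K)}.
\]
The same computation with the integer seminorm gives
\[
|\hat v|_{W^{m,q}(\hat K)}\le C h_K^{m-d/q}|v|_{W^{m,q}(K)}.
\]
For $\sigma=0$ the first bound is the classical integer case.

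\textbf{Step 2 (norm equivalence on the reference element).} The key claim is $|\hat v|_{W^{s,p}(\hat K)}\le C|\hat v|_{W^{m,q}(\hat K)}$ for all $\hat v\in\hat P$. This is the step needing care, because the left side is a seminorm that must vanish whenever the right side does. I will argue as follows. If $|\hat v|_{W^{m,q}(\hat K)}=0$, then all $m$-th derivatives vanish, so $\hat v\in\mathcal P_{m-1}$. Since $m\le[s]$, all derivatives of order $[s]\ge m$ vanish too, hence $|\hat v|_{W^{s,p}}=0$. (When $\sigma>0$ we use $[s]$-th derivatives; these are zero, and the seminorm of a zero function is $0$.)

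Both seminorms then descend to the quotient $\hat P/(\hat P\cap\mathcal P_{m-1})$. On this quotient $|\cdot|_{W^{m,q}}$ is a norm and $|\cdot|_{W^{s,p}}$ is a seminorm, finite because polynomials are smooth on $\hat K$. Finite dimensionality then gives the inequality with $C=C(\hat K,\hat P,s,m,p,q)$.

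\textbf{Step 3 (combine).} Chaining the three bounds gives
\[
|v|_{W^{s,p}(K)}\le C h_K^{-s+d/p}\, h_K^{m-d/q}|v|_{W^{m,q}(K)} = C h_K^{m-s-d(\frac1q-\frac1p)}|v|_{W^{m,q}(K)}.
\]
The only mildly delicate point is the scaling of the Slobodeckij double integral in Step 1, specifically the lower bound $|B_K(\hat x-\hat y)|\ge \|B_K^{-1}\|^{-1}|\hat x-\hat y|$. Everything else is routine.
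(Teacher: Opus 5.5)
Your proof is correct and uses the same mechanism as the paper's: pull back to $\hat K$, compare (semi)norms on a finite-dimensional polynomial space, and push forward using shape regularity. The difference is only organizational. The paper reduces to $s\in(0,1)$ with the full $L^q$ norm on the right, so the reference-element comparison is between genuine norms and no kernel argument is needed; the general case then follows from the integer-order inverse inequality, a chaining step the paper leaves implicit, and it cites Ern--Guermond for the Slobodeckij scaling. You instead treat all $s$ and $m\le[s]$ in one pass through the kernel inclusion on $\hat P\cap\mathcal{P}_{m-1}$ and do the change of variables by hand, which also keeps the determinant powers straight (the paper's intermediate factor $|\det B_K|^{2/p-2/q}$ should read $|\det B_K|^{2/p-1/q}$).
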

\begin{proof}
    The case $s\in\mathbb{N}$ is the widely known discrete inverse inequality. Now we only need to illustrate that $\forall s\in (0,1),l\le k$,
    \begin{equation*}
        h_K|w|_{W^{s,p}(K)}\le C h_K^{-s-\frac{d}{q}+\frac{d}{p}}\|w\|_{L^q(K)},\quad\forall w\in \mathcal{P}_l(K).
    \end{equation*}
    First, we notice that both $\|\hat{w}\|_{L^q(\hat{K})}$ and $\|\hat{w}\|_{L^q(\hat{K})}+|\hat{w}|_{W^{s,p}(\hat{K})}$ are norms on $\mathcal{P}_l(\hat{K})$, which is finite dimensional. Therefore,  there exists a constant $C$ that we can assume to be larger than 1, such that for any $\hat{w}\in\mathcal{P}_l(\hat{K})$,
    \begin{equation*}
        \|\hat{w}\|_{L^q(\hat{K})}+|\hat{w}|_{W^{s,p}(\hat{K})}\le C\|\hat{w}\|_{L^q(\hat{K})},
    \end{equation*}
    which means that $|\hat{w}|_{W^{s,p}(\hat{K})}\le (C-1)\|\hat{w}\|_{L^q(\hat{K})}$.
    Next, \cite[Lemma 2.2]{ern2017finite} suggests the following: Consider a transformation $\psi_K(w)=A_k(w\circ T_K)$, and for any $s$ non-integer,
    \begin{align*}
        &|\psi_K(w)|_{W^{s,p}(\hat{K})}\le C\|A_K\|\|B_K\|^{s+\frac{d}{p}}|\det(B_K)|^{-\frac{2}{p}}|w|_{W^{s,p}(K)};\\
        &|w|_{W^{s,p}(K)}\le C\|A_K^{-1}\|\|B_K^{-1}\|^{s+\frac{d}{p}}|\det(B_K)|^\frac{2}{p}|\psi_K(w)|_{W^{s,p}(\hat{K})}.
    \end{align*}
    Take $A_K=I$, and by the shape regular condition
    \begin{align*}
        |w|_{W^{s,p}(K)}&\le Ch_K^{-s-\frac{d}{p}}|\det(B_K)|^\frac{2}{p}|\hat{w}|_{W^{s,p}(\hat{K})}\\
        &\le Ch_K^{-s-\frac{d}{p}}|\det(B_K)|^\frac{2}{p}\|\hat{w}\|_{L^q(\hat{K})}\\
        &\le Ch_K^{-s-\frac{d}{p}}|\det(B_K)|^{\frac{2}{p}-\frac{2}{q}}\|w\|_{L^q(K)}\\
        &\le Ch_K^{-s+\frac{d}{p}-\frac{d}{q}}\|w\|_{L^q(K)}.
    \end{align*}
\end{proof}
Then we consider the following analogue of the conventional discrete trace theorem.
\begin{lemma}\label{trace ineq}
    Assume that $\{\mathcal{T}_h\}$ is an arbitrary shape regular mesh sequence consisting of tetrahedra, $ K$ is a cell of $ \mathcal{T}_h $, and $ \Gamma \subset K $ is a section of plane inside $K$. Then 
    \begin{enumerate}
        \item $ \forall v \in H^1(K) $, $ \| v \|_{L^2(\Gamma)} \le C ( h_K^{-\frac{1}{2}} \| v \|_{L^2(K)} + h_K^{-\frac{1}{2}} \| Dv \|_{L^2(K)}) $; 
        \item $ \forall v_h \in \mathcal{P}_k(K) $, $ \| v_h \|_{L^2(\Gamma)} \le C h_K^{-\frac{1}{2}} \| v_h \|_{L^2(K)} $.
    \end{enumerate}
\end{lemma}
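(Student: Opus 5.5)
The plan is to pull everything back to the reference tetrahedron $\hat K$ via the affine map $T_K(\hat x)=B_K\hat x+b_K$. There I prove a trace bound on an arbitrary planar section $\hat\Gamma\subset\hat K$ whose constant does not depend on the position or orientation of $\hat\Gamma$. Then I scale back with shape regularity. I will actually prove the sharper bound $\|v\|_{L^2(\Gamma)}\le C(h_K^{-1/2}\|v\|_{L^2(K)}+h_K^{1/2}\|Dv\|_{L^2(K)})$. This implies the stated one, since $h_K\le\operatorname{diam}\Omega$ gives $h_K^{1/2}\le C h_K^{-1/2}$ with $C$ depending only on $\Omega$.

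\textbf{Reference estimate.} Let $\hat\Gamma=T_K^{-1}(\Gamma)$. It is again a planar section of $\hat K$, since affine maps send planes to planes. The main obstacle is uniformity: a direct one-dimensional integration along the normal of $\hat\Gamma$ fails when $\hat\Gamma$ cuts $\hat K$ near a vertex or edge, because the segments become arbitrarily short. I avoid this with a fixed bounded extension operator $E:H^1(\hat K)\to H^1(\mathbb{R}^3)$, which exists because $\hat K$ is Lipschitz (Stein/Calder\'on). It satisfies $\|E\hat v\|_{H^1(\mathbb{R}^3)}\le C\|\hat v\|_{H^1(\hat K)}$. Let $P$ be the full plane containing $\hat\Gamma$. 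The trace inequality $\|w\|_{L^2(P)}\le C\|w\|_{H^1(\mathbb{R}^3)}$ holds with a constant independent of $P$: after a rigid motion, which preserves both norms, $P$ becomes $\{x_3=0\}$, and the usual half-space trace estimate applies (e.g.\ $|w(x',0)|^2\le\int_{\mathbb{R}}(|w|^2+|\partial_3 w|^2)\,dx_3$ for smooth $w$, then density). Since $\hat\Gamma\subset P$, this gives $\|\hat v\|_{L^2(\hat\Gamma)}\le C\|\hat v\|_{H^1(\hat K)}$ uniformly over all sections.

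\textbf{Scaling.} Set $\hat v=v\circ T_K$. The surface Jacobian of $T_K$ restricted to a plane is bounded by $\|B_K\|^2\le Ch_K^2$, so $\|v\|^2_{L^2(\Gamma)}\le Ch_K^2\|\hat v\|^2_{L^2(\hat\Gamma)}$. Shape regularity gives $|\det B_K|\sim h_K^3$ and $\|B_K\|\,\|B_K^{-1}\|\le C$. Hence $\|\hat v\|^2_{L^2(\hat K)}\le Ch_K^{-3}\|v\|^2_{L^2(K)}$ and $\|\hat D\hat v\|^2_{L^2(\hat K)}\le Ch_K^{2-3}\|Dv\|^2_{L^2(K)}$. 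Combining these with the reference estimate yields
\begin{equation*}
\|v\|_{L^2(\Gamma)}^2\le C\big(h_K^{-1}\|v\|_{L^2(K)}^2+h_K\|Dv\|_{L^2(K)}^2\big),
\end{equation*}
which proves part 1 as explained above.

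\textbf{Part 2.} For $v_h\in\mathcal{P}_k(K)$, I apply the sharper form of part 1 together with the inverse inequality of Lemma \ref{inv ineq} (integer case, $s=1$, $m=0$, $p=q=2$): $\|Dv_h\|_{L^2(K)}\le Ch_K^{-1}\|v_h\|_{L^2(K)}$. Then $h_K^{1/2}\|Dv_h\|_{L^2(K)}\le Ch_K^{-1/2}\|v_h\|_{L^2(K)}$, which gives the claim.
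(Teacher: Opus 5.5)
Your proof is correct. The scaling step coincides with the paper's, but you obtain the uniform reference estimate on $\hat K$ by a different route.

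\textbf{The paper's route.} The paper stays inside $\hat K$. It observes that every plane section is a graph $x_3=\zeta(x_1,x_2)$ over a coordinate face with uniformly bounded slope, after permuting coordinates. This holds because some component of the unit normal has modulus at least $1/\sqrt{3}$. It then integrates $\partial_3\hat v$ along vertical segments from the face $\hat F_4=\{x_3=0\}$ up to $\hat\Gamma$; these segments lie in $\hat K$ by convexity. Finally it applies the ordinary trace inequality on the fixed face $\hat F_4$. This is elementary, but its uniformity rests on the bounded-slope observation, which the paper justifies only informally.

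\textbf{Your route.} You extend $\hat v$ to $H^1(\mathbb{R}^3)$ with a fixed operator and use the whole-space trace on an arbitrary plane, whose constant is invariant under rigid motions. This makes independence from the position and orientation of $\hat\Gamma$ automatic and needs no geometric case analysis. The price is invoking the Stein/Calder\'on extension theorem. In return, the argument transfers verbatim to any affine family built on a Lipschitz reference cell.

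\textbf{Part 2.} You state the sharper bound $\|v\|_{L^2(\Gamma)}\le C(h_K^{-1/2}\|v\|_{L^2(K)}+h_K^{1/2}\|Dv\|_{L^2(K)})$ and use it for part 2. The paper's scaling yields exactly this bound in its intermediate step. However, it then records $h_K^{-1/2}$ in front of $\|Dv\|_{L^2(K)}$. Combining that weaker form with the inverse inequality would only give $h_K^{-3/2}\|v_h\|_{L^2(K)}$. So part 2 genuinely requires the version you prove.
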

\begin{proof}
    We start by considering the estimate on the reference element. Let $ \hat{K} $ be the standard tetrahedron in $ \mathbb{R}^3 $, i.e, the four vertices are $ (0,0,0) $, $ (1,0,0) $, $ (0,1,0) $, and $ (0,0,1) $. Notice that for any section of a plane in $ \hat{K} $, the four angles it forms with the four faces of $\hat{K}$ can neither be all orthogonal nor be all of unlimited large slope, simultaneously. Thus, we can assume without loss of generality that any plane section $ \hat{\Gamma} $ shall be of the form $ \hat{\Gamma} = \hat{K} \cap P $, where
    $$ P = \{(x_1, x_2, x_3) | x_3 = \zeta(x_1,x_2) \}, $$
    and $ \sqrt{1 + |\partial_1 \zeta|^2 + |\partial_2 \zeta|^2 } < C $ for some $ C > 0 $ independent of the configuration of $P$. \par
    Therefore, for any $ \hat{v} \in H^1(\hat{K}) $, since 
    $$ \hat{v}(x_1, x_2, x_3) = \hat{v}(x_1, x_2, 0) + \int_0^{x_3} \partial_3 \hat{v}(x_1, x_2, d) ds, $$
    we denote by $\hat{F}_4$ the face $ \{(x_1, x_2, x_3) \in \hat{K} | x_3 = 0 \} $, and 
    \begin{align*}
        \int_{\hat{\Gamma}} |\hat{v}|^2 ds & \le \int_{\hat{F}_4} |\hat{v}(x_1, x_2, \zeta(x_1, x_2))|^2 \sqrt{1 + |\partial_1 \zeta|^2 + |\partial_2 \zeta|^2 } dx_1 dx_2 \\
        & \le C ( \int_{\hat{F}_4} |\hat{v}(x_1, x_2, 0)|^2 dx_1 dx_2 + \int_{\hat{F}_4} \int_0^{\zeta(x_1, x_2)} | \hat{D} \hat{v}(x_1, x_2, s)|^2 ds dx_1 dx_2) \\
        & \le C \| \hat{v} \|_{L^2(\hat{F}_4)}^2 + \| \hat{D} \hat{v} \|_{L^2(\hat{K})}^2 \\
        & \le C \| \hat{v} \|_{L^2(\hat{K})}^2 + \| \hat{D} \hat{v} \|_{L^2(\hat{K})}^2.
    \end{align*}
    We proceed by applying the affine mapping $ T_K :\hat{K} \to K $. Note that any concerned plane section $ \Gamma \subset K $ will be pulled back to some $ \hat{\Gamma} \subset \hat{K} $, and for any $ v \in H^1_0(K) $,
    $$ h_K^{-2} \| v \|_{L^2(\Gamma)}^2 \le C \| v \circ T_K \|_{L^2(\hat{\Gamma})}, $$
    while 
    $$ h_K^{-3} \| v \|_{L^2(K)}^2 + h_K^{-1} \| D v \|_{L^2(K)}^2 \le C \| v \circ T_K \|_{L^2(\hat{K})}^2 + \| \hat{D} v \circ T_K \|_{L^2(\hat{K})}^2. $$
    Thus, we have deduced 
    $$ \| v \|_{L^2(\Gamma)} \le C ( h_K^{-\frac{1}{2}} \| v \|_{L^2(K)} + h_K^{-\frac{1}{2}} \| Dv \|_{L^2(K)}). $$
    The inequality for polynomial space derives from the discrete inverse inequality \ref{inv ineq}.
\end{proof}

\subsection{Proof of Convergence by the Compactness Argument}
Now we prove the unconditional convergence result as promised.
\begin{theorem}
    The numerical solutions to \ref{ip mtd}, $ \{(\bm{u}_h,p_h)\}_{ h \in \mathcal{H} } $, converge to the weak solution to \ref{weak form}, $ (\bm{u},p) $, strongly in $ H_0(\text{curl}; \Omega) \times H^1_0(\Omega) $, i.e.,
    \begin{align*}
        \lim_{h \to 0}\| \bm{u}_h - \bm{u} \|_{H(\text{curl};\Omega)} = 0 ; \quad \lim_{h \to 0}\| p_h - p \|_{L^2(\Omega)} = 0.
    \end{align*}
\end{theorem}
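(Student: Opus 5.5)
We will use the compactness argument of \cite{qiu2023enriched}: extract weakly convergent subsequences from the uniform bounds, identify the limit as the weak solution of \ref{weak form}, and then upgrade weak to strong convergence through the energy identity. Uniqueness of the weak solution then gives convergence of the whole family $\{(\bm{u}_h,p_h)\}_{h\in\mathcal{H}}$.

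\emph{Step 1: uniform bounds and weak limits.} By Theorem \ref{well-posed}, $\|\bm{u}_h\|_{E_h}$ and $\|p_h\|_{H^1_0(\Omega)}$ are bounded by $C\|\bm{f}\|_{L^2(\Omega)}$. Since $\bm{u}_h\in H_0(\text{curl};\Omega)$, we may extract a subsequence with
\begin{itemize}
\item $\bm{u}_h\rightharpoonup\bm{w}$ in $H_0(\text{curl};\Omega)$ and $p_h\rightharpoonup q$ in $H^1_0(\Omega)$;
\item the broken quantity $(\nabla\times)^2_h\bm{u}_h\rightharpoonup\bm{\chi}$ in $[L^2(\Omega)]^3$.
\end{itemize}
The field $\nabla\times\bm{u}_h$ is only tangentially continuous. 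To control it we apply the $H^1$ averaging operator of \cite{ern2017finite} componentwise and set $\bm{z}_h:=\mathcal{I}_{av}(\nabla\times\bm{u}_h)\in[H^1_0(\Omega)]^3$. The standard bound gives $\|\nabla\times\bm{u}_h-\bm{z}_h\|_{L^2}\le Ch\|\bm{u}_h\|_{E_h}$, and also $\|\bm{z}_h\|_{H^1}\le C\|\bm{u}_h\|_{E_h}$. The normal jumps of $\nabla\times\bm{u}_h$ vanish because it is divergence-free across faces, so only the penalised tangential jumps enter. By Rellich, $\bm{z}_h\to\nabla\times\bm{w}$ strongly in $L^2$, and therefore $\nabla\times\bm{u}_h\to\nabla\times\bm{w}$ strongly in $L^2$, with $\nabla\times\bm{w}\in[H^1_0]^3$. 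A weak-curl argument, testing against smooth $\bm{\phi}$ and using the penalty bound, identifies $\bm{\chi}=(\nabla\times)^2\bm{w}$. Hence $\bm{w}\in H_0(\text{curl}^2;\Omega)$.

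Strong $L^2$ convergence of $\bm{u}_h$ itself comes from the discrete divergence-free constraint $b_h(Q_h,\bm{u}_h)=0$. We take the Hodge mapping of \cite{hiptmair2002finite,monk2003finite}: $\bm{u}_h^0$ is the exactly divergence-free field with the same curl, for which $\|\bm{u}_h-\bm{u}_h^0\|_{L^2}\le Ch^{s_0}\|\nabla\times\bm{u}_h\|_{L^2}$. The compact embedding $H_0(\text{curl})\cap H(\text{div}\,0)\hookrightarrow L^2$ then yields $\bm{u}_h\to\bm{w}$ strongly in $L^2$ along the subsequence. The same $b_h$ constraint passes to the limit and gives $(\bm{w},\nabla q)=0$ for all $q\in H^1_0$.

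\emph{Step 2: identification of the limit equation.} Fix smooth $\bm{v}$ in a dense subspace of $H_0(\text{curl}^2)$ with $A(\nabla\times)^2\bm{v}$ piecewise smooth on $\{\Omega_i\}$, and test \ref{ip mtd} with $\bm{v}_h=\Pi^{\text{curl}}_{h,k}\bm{v}$. The volume term converges to $(\bm{\chi},A(\nabla\times)^2\bm{v})$ by weak convergence and Lemma \ref{Nedelec est 2}.

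The two consistency terms are handled separately.
\begin{itemize}
\item The term containing $\bm{n}_F\times[\![\nabla\times\bm{u}_h]\!]$ is bounded by the penalty seminorm times $h^{1/2}\|\{\!\{A(\nabla\times)^2\bm{v}_h\}\!\}\|_F$. By Lemma \ref{trace ineq} the latter is $O(1)$, but the product need not vanish. Instead we rewrite it: elementwise integration by parts against the smooth $A(\nabla\times)^2\bm{v}$ converts the sum into volume terms plus pairings of the form $\langle\bm{n}_F\times\{\!\{\nabla\times\bm{u}_h\}\!\},[\![A(\nabla\times)^2\bm{v}]\!]\rangle$ on faces cut by the interfaces $\partial\Omega_i$. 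Using the strong $L^2$ convergence of the curl, the cut-face trace inequality of Lemma \ref{trace ineq}, and $\nabla\times\bm{w}\in H^1_0$, these combine to $(\nabla\times\bm{w},\nabla\times A(\nabla\times)^2\bm{v})$-type terms. These are consistent with the limit, and the residual vanishes.
\item The symmetric term is $O(h^{1/2})$ times the penalty bound, since $\bm{v}$ is smooth and its jumps vanish up to interpolation error.
\end{itemize}
Thus $(A\bm{\chi},(\nabla\times)^2\bm{v})+(\nabla q,\bm{v})=(\bm{f},\bm{v})$, so $(\bm{w},q)=(\bm{u},p)$ by well-posedness.

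\emph{Step 3: strong convergence.} Testing with $\bm{v}_h=\bm{u}_h$ gives
\[
a_h(\bm{u}_h,\bm{u}_h)=(\bm{f},\bm{u}_h)\to(\bm{f},\bm{u})=(A(\nabla\times)^2\bm{u},(\nabla\times)^2\bm{u}),
\]
where $b_h(p_h,\bm{u}_h)=0$ and $(\nabla p,\bm{u})=0$. This energy convergence, combined with the weak lower semicontinuity and coercivity in \eqref{energy estimate}, gives strong convergence of the broken second curl and of the jumps. In particular the curl converges strongly, which we already have from Step 1, and together with the strong $L^2$ convergence of $\bm{u}_h$ this gives convergence in the $H(\text{curl})$ norm.

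For $p_h$ we use the discrete inf-sup of $b_h$ and the error equation. It is simpler to note that $\nabla p$ is the $L^2$-projection of $\bm{f}$ onto gradients, since $\nabla\cdot\bm{u}=0$ kills the first equation's gradient part, and likewise $\nabla p_h$ equals the projection of $\bm{f}-{}$(curl terms) onto $\nabla Q_h$. Testing with $\bm{v}_h=\nabla q_h$, for which $(\nabla\times)\nabla q_h=0$, gives $(\nabla p_h,\nabla q_h)=(\bm{f},\nabla q_h)$. Hence $p_h$ is the Ritz projection of $p$, and strong $H^1$ convergence follows by density.

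\emph{Main obstacle.} We expect the hardest step to be the face terms in Step 2 with discontinuous $A$ and non-matching meshes. There $(\nabla\times)^2\bm{u}$ has no traces, so consistency must be transferred to the smooth test function and the cut-face pairings must be controlled.
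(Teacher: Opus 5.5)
\textbf{Gap in Step 1/Step 2.} Your architecture is the paper's: Hodge mapping plus compact embedding for $\bm u_h$, averaging of $\nabla\times\bm u_h$, identification of the limit against smooth test functions including the interface pairings on $\Sigma$, then uniqueness. The genuine gap is the claim in Step 1 that a weak-curl argument with the penalty bound identifies the weak limit $\bm\chi$ of the broken second curl as $(\nabla\times)^2\bm w$. Step 2 relies on this when you assert that the residual of the consistency term vanishes and write the limit equation with $A\bm\chi$. Testing with smooth compactly supported $\bm\phi$ gives
\[
\sum_{K\in\mathcal{T}_h}((\nabla\times)^2\bm u_h,\bm\phi)_K=(\nabla\times\bm u_h,\nabla\times\bm\phi)_\Omega+\sum_{F\in\mathcal{F}_h}\langle \bm n_F\times[\![\nabla\times\bm u_h]\!],\bm\phi\rangle_F,
\]
and the penalty bound controls the face sum only by $C(\sum_F h_F\|\bm\phi\|_{L^2(F)}^2)^{1/2}\approx C\|\bm\phi\|_{L^2(\Omega)}$, which is $O(1)$, not $o(1)$. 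No argument using only the penalty bound can do better. Take the lowest-order Whitney interpolant of a smooth compactly supported $\bm g$ with $(\nabla\times)^2\bm g\neq 0$. It lies in $E_h$ and has piecewise constant curl, hence zero broken second curl. Its tangential curl jumps are $O(h)$, so its $\|\cdot\|_{E_h}$ norm is bounded, and it converges to $\bm g$ in $H(\text{curl};\Omega)$. For your sequence, the consistency term therefore tends to $\pm((\nabla\times)^2\bm w-\bm\chi,A(\nabla\times)^2\bm v)_\Omega$, and your argument gives no reason for this to be zero.

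\textbf{Step 3 does not help.} $a_h(\bm u_h,\bm u_h)$ contains the sign-indefinite terms $-2\sum_F\langle\{\!\{A(\nabla\times)^2\bm u_h\}\!\},\bm n_F\times[\![\nabla\times\bm u_h]\!]\rangle_F$, to which no lower semicontinuity applies. So convergence of $(\bm f,\bm u_h)$ yields no strong convergence of the broken second curl or of the jumps; the paper's remark says exactly that $H(\text{curl}^2)$ convergence is out of reach here. That claim is not needed for the statement and should be dropped.

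\textbf{The fix.} Follow the paper's handling of its term $I$: never pass to the limit in the broken volume term by itself.
\begin{enumerate}
\item On each cell, split $(\nabla\times)^2\bm u_h=\nabla\times\bm z_h+\nabla\times(\nabla\times\bm u_h-\bm z_h)$.
\item Integrate the second piece by parts on each $K\cap\Omega_i$ against $A(\nabla\times)^2\bm v$. Because $\bm z_h\in[H^1_0(\Omega)]^3$ has no jumps and $[\![\nabla\times\bm u_h]\!]$ is purely tangential, the resulting mesh-face sum cancels the consistency term identically, before any limit is taken (the paper's $I_3=-I_6$).
\item What remains converges: $(A\nabla\times\bm z_h,(\nabla\times)^2\bm v)_\Omega\to(A(\nabla\times)^2\bm w,(\nabla\times)^2\bm v)_\Omega$ by weak $H^1$ convergence. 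There is also an $O(h)$ volume term from the averaging estimate. Finally, the pairing $\langle\bm n_\Sigma\times\{\!\{\bm z_h-\nabla\times\bm u_h\}\!\},[\![A(\nabla\times)^2\bm v]\!]\rangle_\Sigma$ is $O(h^{1/2})$ by Lemma~\ref{trace ineq}.
\end{enumerate}
With this, $\bm\chi$ never enters. Your Step 2 integration by parts already contains this computation; the error is only in splitting it and assigning the volume limit to $\bm\chi$.

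\textbf{Pressure.} Here your route is correct and better than the paper's. Since $\nabla Q_h\subset E_h$ and $a_h(\cdot,\nabla q_h)=0$, you get $(\nabla p_h,\nabla q_h)=(\bm f,\nabla q_h)$ for all $q_h\in Q_h$. Testing the continuous problem with $\nabla q\in H_0(\text{curl}^2;\Omega)$ gives $(\nabla p,\nabla q)=(\bm f,\nabla q)$. Hence $p_h$ is the Ritz projection of $p$, and C\'ea's lemma gives strong $H^1_0$ convergence of the whole family, with no compactness. The paper, by contrast, obtains only $L^2$ convergence along subsequences, via Rellich and the identification step.
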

The strategy is as established in \cite{qiu2023enriched}. Notice that if we can prove the following two lemmas, then this theorem follows immediately by a simple argument of contradiction.
\begin{lemma}\label{step 1}
    The set $ \{(\bm{u}_h,p_h)\}_{ h \in \mathcal{H} } $ or any infinite subset of it, admits a convergent subsequence, which we still denote as $ \{(\bm{u}_h,p_h)\}_{ h } $, that converges to the limit point $ (\bm{u}_0,p_0) \in H_0(\text{curl}^2;\Omega) \times H^1_0(\Omega) $ in the following way:
    \begin{align*}
        \lim_{h \to 0}\| \bm{u}_h - \bm{u}_0 \|_{H(\text{curl};\Omega)} = 0 ; \quad \lim_{h \to 0}\| p_h - p_0 \|_{L^2(\Omega)} = 0.
    \end{align*}
\end{lemma}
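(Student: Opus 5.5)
We extract a convergent subsequence from the uniform bounds of Theorem \ref{well-posed} via a compactness argument in the spirit of \cite{qiu2023enriched}. The argument splits into the pressure part and the vector part, and then identifies the regularity of the limit.

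\emph{Pressure.} Since $\|p_h\|_{H^1_0(\Omega)}\le C\|\bm{f}\|_{L^2(\Omega)}$ and $Q_h\subset H^1_0(\Omega)$, weak compactness in $H^1_0(\Omega)$ and the Rellich theorem give a subsequence with $p_h\rightharpoonup p_0$ weakly in $H^1_0(\Omega)$ and $p_h\to p_0$ strongly in $L^2(\Omega)$. This is exactly the claimed $L^2$ convergence for the pressure.

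\emph{Vector field, step 1.} We have $\bm{u}_h\in V_h\subset H_0(\text{curl};\Omega)$ and $\|\bm{u}_h\|_{E_h}\le C$. This bounds $\|\bm{u}_h\|_{H(\text{curl};\Omega)}$, and it also bounds, uniformly, the broken $H^1$-type quantity for $\bm{w}_h:=\nabla\times\bm{u}_h$. Indeed, $\bm{w}_h$ is piecewise polynomial, lies in $H_0(\text{div}\,0;\Omega)$ because $\bm{u}_h$ is curl-conforming, and has controlled piecewise curl $(\nabla\times)^2\bm{u}_h$ together with controlled tangential jumps $h_F^{-1/2}\|\bm{n}_F\times[\![\bm{w}_h]\!]\|_{L^2(F)}$. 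Its normal jumps vanish. So $\bm{w}_h$ has bounded broken $H(\text{curl})\cap H(\text{div})$ energy with full jump control.

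\emph{Vector field, step 2.} We apply an $H^1$ averaging operator $\mathcal{I}_h$, the Oswald-type operator of \cite{ern2017finite}, to $\bm{w}_h$. Together with the trace and inverse inequalities (Lemmas \ref{inv ineq}, \ref{trace ineq}), this gives
$$\|\bm{w}_h-\mathcal{I}_h\bm{w}_h\|_{L^2}\le C\Big(\sum_F h_F\|[\![\bm{w}_h]\!]\|_F^2\Big)^{1/2}\le Ch,$$
$$\|\mathcal{I}_h\bm{w}_h\|_{H^1}\le C.$$
Here the full jump equals the tangential jump, since the normal jump is zero. Rellich then gives strong $L^2$ convergence of $\mathcal{I}_h\bm{w}_h$ along a subsequence, and hence of $\bm{w}_h=\nabla\times\bm{u}_h$, to some $\bm{w}_0$.

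\emph{Vector field, step 3.} For $\bm{u}_h$ itself, we use the discrete divergence-free constraint $\bm{u}_h\perp\nabla Q_h$. The Hodge mapping of \cite{hiptmair2002finite,monk2003finite} associates to $\bm{u}_h$ a field $H\bm{u}_h\in H_0(\text{curl};\Omega)\cap H(\text{div}\,0;\Omega)$ with the same curl. This field is bounded in $H^{s_0}$, hence compact in $L^2$, and satisfies $\|\bm{u}_h-H\bm{u}_h\|_{L^2}\le Ch^{s_0}\|\nabla\times\bm{u}_h\|_{L^2}$. One subtlety: $Q_h$ is of degree $k+1$ while $E_h$ has degree $k$, so the discrete kernel of the curl is matched, since the gradients of $Q_h$ lie in $E_h$. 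Combining this with step 2 yields $\bm{u}_h\to\bm{u}_0$ strongly in $H(\text{curl};\Omega)$, with $\nabla\times\bm{u}_0=\bm{w}_0$, $\nabla\cdot\bm{u}_0=0$, and $\bm{u}_0\in H_0(\text{curl};\Omega)$ by closedness.

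\emph{Regularity of the limit.} It remains to show $\bm{u}_0\in H_0(\text{curl}^2;\Omega)$, i.e. $\nabla\times\bm{w}_0\in L^2$ and $\bm{n}\times\bm{w}_0=0$. Take $\bm{\phi}\in[C^\infty(\bar\Omega)]^3$ (compactly supported for the interior statement) and integrate by parts elementwise:
$$(\bm{w}_h,\nabla\times\bm{\phi})=\sum_K((\nabla\times)^2\bm{u}_h,\bm{\phi})_K+\sum_F\langle\bm{n}_F\times[\![\bm{w}_h]\!],\bm{\phi}\rangle_F.$$
The face sum is $O(h^{1/2})$ by the penalty bound, and includes boundary faces, which encode the boundary condition. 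The broken curls $(\nabla\times)^2\bm{u}_h$ are bounded in $L^2$ and converge weakly along a subsequence to some $\bm{z}$. Passing to the limit gives $(\bm{w}_0,\nabla\times\bm{\phi})=(\bm{z},\bm{\phi})$ for all smooth $\bm{\phi}$, which yields both $\nabla\times\bm{w}_0=\bm{z}\in L^2$ and $\bm{n}\times\bm{w}_0=0$.

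The main obstacle I expect is step 3. Making the Hodge mapping argument rigorous on a nonconvex domain needs the $H^{s_0}$ embedding with $s_0>1/2$ and the approximation estimate for $\bm{u}_h-H\bm{u}_h$. It also needs care in checking that the discrete constraint with $Q_h$ of degree $k+1$ is exactly the discrete-divergence-free condition for the second-kind Nédélec space. If that matching fails, I would instead use the discrete compactness property of Nédélec spaces directly.
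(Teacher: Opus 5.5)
Your pressure argument, the averaging of $\bm w_h=\nabla\times\bm u_h$, and the Hodge-mapping compactness for $\bm u_h$ follow the paper's proof essentially step by step. Your $H^{s_0}$-embedding of $H_0(\text{curl};\Omega)\cap H(\text{div }0;\Omega)$ plays the role of the compact embedding the paper cites.

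\textbf{The gap} is in your final paragraph: the face sum $\sum_F\langle\bm n_F\times[\![\bm w_h]\!],\bm\phi\rangle_F$ is not $O(h^{1/2})$. The penalty only gives $\sum_F h_F^{-1}\|\bm n_F\times[\![\bm w_h]\!]\|_{L^2(F)}^2\le C$, so the best available bound is
\[
\Big|\sum_{F\in\mathcal{F}_h}\langle\bm n_F\times[\![\bm w_h]\!],\bm\phi\rangle_F\Big|\le C\Big(\sum_{F\in\mathcal{F}_h}h_F\|\bm\phi\|_{L^2(F)}^2\Big)^{1/2}\le C\big(\|\bm\phi\|_{L^2(\Omega)}+h\|\nabla\bm\phi\|_{L^2(\Omega)}\big),
\]
which is $O(1)$, and this is sharp.

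Here is a counterexample. Take $\bm u_h$ to be the lowest-order first-kind N\'ed\'elec interpolant of a smooth compactly supported $\bm u$. It lies in $E_h$ since $k\ge 2$; subtract a discrete gradient from $\nabla Q_h$ to land in $V_h$. Then $\nabla\times\bm u_h$ is the $RT_0$ interpolant of $\nabla\times\bm u$, hence piecewise constant. So the broken $(\nabla\times)^2\bm u_h$ vanishes identically, while $\|\bm u_h\|_{E_h}$ stays bounded. By your own identity, the face sum then equals $(\bm w_h,\nabla\times\bm\phi)$, which tends to $(\nabla\times\nabla\times\bm u,\bm\phi)$, nonzero in general. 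Hence the weak limit $\bm z$ of the broken curls is in general not $\nabla\times\bm w_0$, because the lifted tangential jumps survive in the limit. The identity $(\bm w_0,\nabla\times\bm\phi)=(\bm z,\bm\phi)$ therefore cannot be derived from the energy bounds.

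\textbf{How to repair it.} The conclusion is still reachable in either of two ways.

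(i) Keep your integration by parts but use only the bound above. It yields $|(\bm w_h,\nabla\times\bm\phi)|\le C(\|\bm\phi\|_{L^2(\Omega)}+h\|\nabla\bm\phi\|_{L^2(\Omega)})$. Hence $|(\bm w_0,\nabla\times\bm\phi)|\le C\|\bm\phi\|_{L^2(\Omega)}$ for all $\bm\phi\in[C^\infty(\overline{\Omega})]^3$. Riesz representation then gives $\bm z_0\in[L^2(\Omega)]^3$ with $(\bm w_0,\nabla\times\bm\phi)=(\bm z_0,\bm\phi)$, i.e.\ $\nabla\times\bm w_0=\bm z_0$ and $\bm n\times\bm w_0=0$.

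(ii) Alternatively, do what the paper does: take the averaging operator with values in $[H^1_0(\Omega)]^3$. This is legitimate because boundary faces are penalized and the normal trace of $\bm w_h$ vanishes on $\partial\Omega$ as well. Then $\bm w_0=\nabla\times\bm u_0$ is the weak $H^1_0$-limit of $\mathcal{I}_h\bm w_h$, and $\bm u_0\in H_0(\text{curl}^2;\Omega)$ is immediate.

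Route (ii) also supplies a step you left implicit, namely $\|\mathcal{I}_h\bm w_h\|_{H^1}\le C$. The energy norm controls only the broken curl (and zero divergence) of $\bm w_h$, not its full broken gradient. The bound follows from $\|\nabla\bm v\|_{L^2}^2=\|\nabla\times\bm v\|_{L^2}^2+\|\nabla\cdot\bm v\|_{L^2}^2$ for $\bm v=\mathcal{I}_h\bm w_h\in[H^1_0(\Omega)]^3$. Combine this with the jump estimate for the broken gradient of $\bm w_h-\mathcal{I}_h\bm w_h$.
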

\begin{lemma}\label{step 2}
    Any limit point of $ \{(\bm{u}_h,p_h)\}_{ h \in \mathcal{H} } $ raised as in lemma \ref{step 1} coincides with the weak solution of \ref{weak form}, $ (\bm{u},p) $.
\end{lemma}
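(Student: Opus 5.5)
Take a limit point $(\bm{u}_0,p_0)$ given by Lemma \ref{step 1}, along a subsequence $h\to 0$ with $\bm{u}_h\to\bm{u}_0$ in $H(\text{curl};\Omega)$ and $p_h\to p_0$ in $L^2(\Omega)$. The plan is to show that $(\bm{u}_0,p_0)$ satisfies both equations of \ref{weak form}; uniqueness from the well-posedness theorem then gives $(\bm{u}_0,p_0)=(\bm{u},p)$. Since the problem is linear, it suffices to test against a dense class. For the constraint, take $q\in C_c^\infty(\Omega)$ and its Lagrange interpolant $q_h=I_h q\in Q_h$. Then $0=b_h(q_h,\bm{u}_h)=(\nabla q_h,\bm{u}_h)\to(\nabla q,\bm{u}_0)$, using strong convergence of $\nabla q_h$ and of $\bm{u}_h$ in $L^2$. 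Density of $C_c^\infty$ in $H^1_0$ closes this part. In addition, the uniform bound $\|p_h\|_{H^1}\le C\|\bm{f}\|$ from Theorem \ref{well-posed} gives $\nabla p_h\rightharpoonup\nabla p_0$ weakly in $L^2$, so $p_0\in H^1_0(\Omega)$.

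For the first equation, I need more information on the broken second curl. Set $\bm{w}_h:=(\nabla\times)^2_h\bm{u}_h$, the piecewise second curl. This sequence is bounded in $L^2$ by Theorem \ref{well-posed}, so after a further subsequence $\bm{w}_h\rightharpoonup\bm{w}$ weakly. I will show $\bm{w}=(\nabla\times)^2\bm{u}_0$ and that $\bm{n}\times\nabla\times\bm{u}_0=0$, so that $\bm{u}_0\in H_0(\text{curl}^2;\Omega)$. For $\bm{\phi}\in[C^\infty(\bar\Omega)]^3$, elementwise integration by parts gives
\[
(\bm{w}_h,\bm{\phi})=(\nabla\times\bm{u}_h,\nabla\times\bm{\phi})+\sum_F\langle \bm{n}_F\times[\![\nabla\times\bm{u}_h]\!],\bm{\phi}\rangle_F ,
\]
with boundary faces included. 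By Cauchy--Schwarz the jump term is bounded by $(\sum_F\frac{\tau}{h_F}\|\bm{n}_F\times[\![\nabla\times\bm{u}_h]\!]\|^2)^{1/2}(\sum_F h_F\|\bm{\phi}\|^2_{L^2(F)})^{1/2}$. This does not vanish on its own: the first factor is only bounded and the second is $O(1)$. I would therefore subtract a conforming average first. Let $\bm{z}_h=\mathcal{I}^{av}(\nabla\times\bm{u}_h)\in[H^1_0(\Omega)]^3$ be the $H^1$ averaging operator of \cite{ern2017finite}. It satisfies $\|\nabla\times\bm{u}_h-\bm{z}_h\|_{L^2}\le Ch(\sum h_F^{-1}\|[\![\cdot]\!]\|^2)^{1/2}\to0$ and $\|\bm{z}_h\|_{H^1}\le C$. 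The full jump enters here, but its normal component vanishes because $\nabla\times\bm{u}_h\in H(\text{div})$ (as $\bm{u}_h\in H(\text{curl})$), so only tangential jumps remain. Hence $\bm{z}_h\rightharpoonup\nabla\times\bm{u}_0$ weakly in $H^1_0$. Consequently $\nabla\times\bm{u}_0\in H^1_0$, which gives the boundary condition, and $\nabla\times\bm{z}_h\rightharpoonup(\nabla\times)^2\bm{u}_0$. Identifying $\bm{w}$ then reduces to $\nabla_h\times(\nabla\times\bm{u}_h-\bm{z}_h)\rightharpoonup0$ weakly. I expect this to follow by testing with smooth $\bm{\phi}$ and bounding the face term by $\|[\![\cdot]\!]\|\,\|\bm{\phi}-\bm{\phi}_F\|$, after first replacing $\bm{\phi}$ by its piecewise-constant projection and using consistency.

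For the main identity, take $\bm{v}$ in a dense subset of $H_0(\text{curl}^2;\Omega)$, for instance $[C_c^\infty(\Omega)]^3$, and set $\bm{v}_h=\Pi^{\text{curl}}_{h,k}\bm{v}$. Lemma \ref{Nedelec est 2} and \ref{Nedelec est 1} give $\bm{v}_h\to\bm{v}$ in $H(\text{curl})$, $(\nabla\times)^2_h\bm{v}_h\to(\nabla\times)^2\bm{v}$ strongly, and vanishing penalty seminorm. Passing to the limit in \ref{ip mtd} proceeds term by term:
\begin{itemize}
\item The volume term $(A\bm{w}_h,(\nabla\times)^2_h\bm{v}_h)$ is a weak--strong pairing and converges to $(A(\nabla\times)^2\bm{u}_0,(\nabla\times)^2\bm{v})$.
\item The symmetric consistency term and the penalty term involving $[\![\nabla\times\bm{v}_h]\!]$ vanish, since $\|\bm{u}_h\|_{E_h}$ is bounded, the penalty seminorm of $\bm{v}_h$ tends to zero, and the discrete trace bound of Lemma \ref{trace ineq} controls $h_F^{1/2}\|\{\!\{A(\nabla\times)^2\bm{u}_h\}\!\}\|_F\le C\|\bm{w}_h\|$.
\item The load term $(\bm{f},\bm{v}_h)$ and the pressure term $(\nabla p_h,\bm{v}_h)$ converge by weak--strong pairing.
\end{itemize}

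The main obstacle is the remaining term $\sum_F\langle\{\!\{A(\nabla\times)^2\bm{v}_h\}\!\},\bm{n}_F\times[\![\nabla\times\bm{u}_h]\!]\rangle_F$, together with the weak identification step above. Here $A$ is discontinuous across $\partial\Omega_i$ and the mesh is not aligned with it, so I would use Lemma \ref{trace ineq} on plane sections of cells cut by interfaces. I would first replace $A(\nabla\times)^2\bm{v}_h$ by the smooth-per-subdomain field $A(\nabla\times)^2\bm{v}$. On each face, write $\{\!\{A(\nabla\times)^2\bm{v}\}\!\}=A(\nabla\times)^2\bm{v}$ away from subdomain interfaces, and move the jump onto the cells by integrating back. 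This yields $(A(\nabla\times)^2\bm{v},\nabla_h\times(\nabla\times\bm{u}_h-\bm{z}_h))$-type terms, which tend to zero by the identification above. It also yields interface terms $\langle\bm{n}\times\{\!\{\nabla\times\bm{u}_h\}\!\},[\![A(\nabla\times)^2\bm{v}]\!]\rangle_{\partial\Omega_i}$. These interface terms converge, through the same averaging argument and the strong convergence $\bm{z}_h\to\nabla\times\bm{u}_0$ in $L^2$ of traces (compact $H^1\to L^2(\partial\Omega_i)$ trace), to the corresponding terms of $\nabla\times\bm{u}_0$. Those cancel against the integration by parts that is implicit in the limiting volume term. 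Once this term is shown to converge to $0$, $(\bm{u}_0,p_0)$ solves \ref{weak form}.
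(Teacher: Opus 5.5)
Several parts of your proposal are sound and agree with the paper: the constraint, $p_0\in H^1_0(\Omega)$, $\nabla\times\bm u_0\in[H^1_0(\Omega)]^3$ via the averaging operator, and the vanishing of $\sum_F\langle\{\!\{A(\nabla\times)^2\bm u_h\}\!\},\bm n_F\times[\![\nabla\times\bm v_h]\!]\rangle_F$, of the penalty term, and of the $\bm v-\bm v_h$ corrections.

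The gap is the identification $\bm w=(\nabla\times)^2\bm u_0$, i.e.\ $\nabla_h\times(\nabla\times\bm u_h-\bm z_h)\rightharpoonup 0$. On every face $[\![\nabla\times\bm u_h-\bm z_h]\!]=[\![\nabla\times\bm u_h]\!]$, and $\|\nabla\times\bm u_h-\bm z_h\|_{L^2(\Omega)}=O(h)$. Your own integration-by-parts identity therefore shows the identification is equivalent to $\sum_F\langle\bm n_F\times[\![\nabla\times\bm u_h]\!],\bm\phi\rangle_F\to0$ for smooth $\bm\phi$. With $\bm\phi=A(\nabla\times)^2\bm v$, this is essentially the face term $\sum_F\langle\{\!\{A(\nabla\times)^2\bm v\}\!\},\bm n_F\times[\![\nabla\times\bm u_h]\!]\rangle_F$, which you later kill by appealing to that same identification. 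The argument is circular.

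Nothing available breaks the circle. The a priori bounds only make $\sum_F h_F^{-1}\|\bm n_F\times[\![\nabla\times\bm u_h]\!]\|_{L^2(F)}^2$ bounded, not small. The face means $\int_F\bm n_F\times[\![\nabla\times\bm u_h]\!]$ do not vanish for this DG space; there is no patch-test property to invoke as ``consistency''. So after replacing $\bm\phi$ by a facewise constant, an $O(1)$ sum remains.

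In fact the claim is false for general bounded sequences in $V_h$. Let $\nabla\times\bm u_h$ be the cellwise constant, divergence-free $RT_0$ interpolant of a smooth compactly supported solenoidal $\bm\sigma$ with $\nabla\times\bm\sigma\neq 0$. Then the tangential jumps are $O(h)$ pointwise, the penalty seminorm stays bounded, and $\nabla\times\bm u_h\to\bm\sigma$ in $L^2$. Yet $\nabla_h\times\nabla\times\bm u_h=0$. This is the familiar DG effect: a broken derivative converges weakly to the derivative of the limit plus the weak limit of a lifting of the jumps.

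The paper avoids this by never separating the volume term from the face term. In its term $I$ it adds $\sum_F\langle\{\!\{A(\nabla\times)^2\bm v\}\!\},\bm n_F\times[\![\nabla\times\bm u_h]\!]\rangle_F$ to $\sum_K(A(\nabla\times)^2(\bm u_0-\bm u_h),(\nabla\times)^2\bm v)_K$. It then splits $\nabla\times\bm u_h=\mathcal J^{av}_{h,0}(\nabla\times\bm u_h)+(\nabla\times\bm u_h-\mathcal J^{av}_{h,0}(\nabla\times\bm u_h))$. The averaged part is handled by weak $H^1$ convergence. Only the remainder is integrated by parts on each $K\cap\Omega_i$ against the piecewise smooth $A(\nabla\times)^2\bm v$. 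The consequences are:
\begin{itemize}
\item the mesh-face contributions cancel the added face sum exactly ($I_3=-I_6$);
\item the volume remainder is $O(h)$;
\item the contributions on $\Sigma$ involve only $\{\!\{\mathcal J^{av}_{h,0}(\nabla\times\bm u_h)-\nabla\times\bm u_h\}\!\}$ and are $O(h^{1/2})$ by Lemma \ref{trace ineq} and the averaging estimate.
\end{itemize}
No traces of $\nabla\times\bm u_0$ and no cancellation against the limiting volume term are needed. The other half of the symmetric term, involving $\bm v_h-\bm v$, is what vanishes in $I\!I\!I_2$.

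If you pass to the limit in the combination $\sum_K(A(\nabla\times)^2\bm u_h,(\nabla\times)^2\bm v)_K-\sum_F\langle\{\!\{A(\nabla\times)^2\bm v\}\!\},\bm n_F\times[\![\nabla\times\bm u_h]\!]\rangle_F$ in this way, your argument goes through. You never need to identify the weak limit of the broken second curl.
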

To prove Lemma \ref{step 1}, we need to invoke the following results from literature. \par
First, we consider the \textit{Hodge mapping} introduced in \cite[(4.8)]{hiptmair2002finite}: In the context, letting $ \varepsilon = I_{3 \times 3} $, the simple connectivity of $ \Omega $ identifies the space $ Z_0(\varepsilon;\Omega) $ in the author's notation with $ H_0(\text{curl}^2;\Omega)\cap H(\text{div }0;\Omega) $. Hence, 
\begin{lemma}
    For simply connected $ \Omega $ and $ \varepsilon = I_{3 \times 3} $, the following operator is well defined:
    $$ \mathtt{H}_\varepsilon: H_0(\text{curl};\Omega) \to H_0(\text{curl};\Omega) \cap H(\text{div }0;\Omega) $$
    such that 
    \begin{equation}\label{Hodge 1}
        \nabla \times \mathtt{H}_\varepsilon (\bm{v}) = \nabla \times \bm{v}; \quad \nabla \cdot \mathtt{H}\varepsilon (\bm{v}) = 0; \quad \forall \bm{v} \in H_0(\text{curl};\Omega).
    \end{equation}
    Moreover, for any $ \bm{v_h} \in V_h $, i.e., the finite element function of zero discrete divergence, \cite[Lemma 4.5]{hiptmair2002finite} implies 
    \begin{equation}\label{Hodge 2}
        \| \bm{v_h} - \mathtt{H}\varepsilon (\bm{v_h}) \|_{L^2(\Omega)} \le C h^{\frac{1}{2}+\delta} \| \nabla \times \bm{v}_h \|_{L^2(\Omega)}
    \end{equation}
\end{lemma}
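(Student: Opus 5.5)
The plan is to construct $\mathtt{H}_\varepsilon$ explicitly by an $L^2$-orthogonal (Helmholtz) projection. We then obtain \eqref{Hodge 2} from the discrete exact sequence of the second-kind N\'ed\'elec spaces together with the interpolation estimate \eqref{Nedelec est 1}.

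\emph{Construction.} Given $\bm{v} \in H_0(\text{curl};\Omega)$, let $\phi \in H^1_0(\Omega)$ solve
\[
(\nabla \phi, \nabla q) = (\bm{v}, \nabla q) \quad \forall q \in H^1_0(\Omega).
\]
This problem is uniquely solvable by Lax--Milgram. Set $\mathtt{H}_\varepsilon(\bm{v}) := \bm{v} - \nabla \phi$. Then:
\begin{itemize}
\item $\nabla\times \mathtt{H}_\varepsilon(\bm{v}) = \nabla\times\bm{v}$, since $\nabla\times\nabla\phi = 0$.
\item $\nabla\cdot \mathtt{H}_\varepsilon(\bm{v}) = 0$ in the distributional sense, by the defining equation of $\phi$.
\item $\bm{n}\times\mathtt{H}_\varepsilon(\bm{v}) = 0$ on $\partial\Omega$, because $\phi = 0$ on $\partial\Omega$ gives $\bm{n}\times\nabla\phi = 0$.
\end{itemize}
The image is unique. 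If two fields in $H_0(\text{curl};\Omega)\cap H(\text{div }0;\Omega)$ have the same curl, their difference is curl-free and therefore a gradient $\nabla\psi$ with $\psi \in H^1_0(\Omega)$; this uses simple connectivity and the connectedness of $\partial\Omega$, which I am assuming here. Since the difference is also divergence-free, $\psi$ is harmonic and vanishes, so the difference is zero. This also identifies the operator with the one in \cite{hiptmair2002finite}.

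\emph{Estimate.} Fix $\bm{v}_h \in V_h$ and write $\bm{w} := \mathtt{H}_\varepsilon(\bm{v}_h)$.
\begin{enumerate}
\item \emph{Regularity of $\bm{w}$.} Since $\bm{w} \in H_0(\text{curl};\Omega)\cap H(\text{div }0;\Omega)$ on a Lipschitz polyhedron, the standard embedding (Amrouche--Bernardi--Dauge--Girault) gives $\bm{w} \in [H^{\frac12+\delta}(\Omega)]^3$ with
\[
\|\bm{w}\|_{H^{\frac12+\delta}(\Omega)} \le C\|\nabla\times\bm{v}_h\|_{L^2(\Omega)}.
\]
\item \emph{Interpolation.} On each cell, $\nabla\times\bm{w} = \nabla\times\bm{v}_h$ is a polynomial. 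Hence $\Pi^{\text{curl}}_{h,k}\bm{w}$ is well defined with $s = \frac12+\delta$. By the commuting property, $\nabla\times\Pi^{\text{curl}}_{h,k}\bm{w} = \Pi^{BDM}_k(\nabla\times\bm{v}_h) = \nabla\times\bm{v}_h$. So $\Pi^{\text{curl}}_{h,k}\bm{w} - \bm{v}_h \in E_h$ is curl-free.
\item \emph{Discrete exact sequence.} On a simply connected domain, the sequence $Q_h \xrightarrow{\nabla} E_h \xrightarrow{\nabla\times}$ is exact for Lagrange elements of degree $k+1$ and second-kind N\'ed\'elec elements of degree $k$. Therefore $\Pi^{\text{curl}}_{h,k}\bm{w} - \bm{v}_h = \nabla q_h$ for some $q_h \in Q_h$.
\item \emph{Galerkin orthogonality.} We have $(\bm{v}_h - \bm{w}, \nabla q_h) = 0$. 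Indeed, $(\bm{v}_h,\nabla q_h) = 0$ because $\bm{v}_h \in V_h$, and $(\bm{w},\nabla q_h) = 0$ because $\bm{w}$ is divergence-free and $q_h \in H^1_0(\Omega)$. Hence
\[
\|\bm{v}_h-\bm{w}\|_{L^2}^2 = (\bm{v}_h-\bm{w},\, \Pi^{\text{curl}}_{h,k}\bm{w}-\bm{w}),
\qquad\text{so}\qquad
\|\bm{v}_h-\bm{w}\|_{L^2} \le \|\bm{w}-\Pi^{\text{curl}}_{h,k}\bm{w}\|_{L^2}.
\]
\item \emph{Conclusion.} By \eqref{Nedelec est 1} cellwise with $s=\frac12+\delta$, the last quantity is bounded by
\[
C h^{\frac12+\delta}\Bigl(\|\bm{w}\|_{H^{s}(\Omega)} + \bigl(\textstyle\sum_K h_K^2\|\nabla\times\bm{v}_h\|_{H^s(K)}^2\bigr)^{1/2}\Bigr).
\]
The second term is controlled by Lemma \ref{inv ineq}, which gives $h_K|\nabla\times\bm{v}_h|_{H^s(K)} \le C h_K^{1-s}\|\nabla\times\bm{v}_h\|_{L^2(K)}$. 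Combined with step 1, this yields \eqref{Hodge 2}.
\end{enumerate}

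\emph{Main obstacle.} The delicate point is the low-regularity interpolation. The first term of \eqref{Nedelec est 1} uses the global $H^{\frac12+\delta}$ norm of $\bm{w}$, and the argument needs the N\'ed\'elec interpolant to be stable for $s$ only slightly above $\frac12$. This works only because the curl is piecewise polynomial; the edge and face moments are then well defined, so the stated range $\frac12 < s \le k$ applies.

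The step I am least sure of is the regularity embedding $H_0(\text{curl})\cap H(\text{div})\hookrightarrow H^{\frac12+\delta}$ with this exact boundary setting. If it cannot simply be cited, I would fall back on \cite[Lemma 4.5]{hiptmair2002finite} directly, which already contains this estimate.
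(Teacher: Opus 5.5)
Your argument is correct and is the standard proof of \cite[Lemma 4.5]{hiptmair2002finite} (equivalently \cite[Lemma 7.6]{monk2003finite}) — $L^2$-projection onto $(\nabla H^1_0(\Omega))^\perp$, $H^{\frac12+\delta}$ regularity, commuting interpolation, discrete exactness, then Galerkin orthogonality — written out for the second-kind N\'ed\'elec family, which is the extension the paper asserts without giving details. The extra hypothesis you flag is the one that actually matters: both the bound $\|\bm{w}\|_{L^2(\Omega)}\le C\|\nabla\times\bm{w}\|_{L^2(\Omega)}$ behind step~1 and the exactness $\ker(\nabla\times)\cap E_h=\nabla Q_h$ in step~3 need $\partial\Omega$ to be connected, while simple connectivity of $\Omega$ plays no role here; the paper leaves this condition implicit in its citation.
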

These results are also available in \cite[(7.14) and Lemma 7.6]{monk2003finite}. Specifically, \cite{hiptmair2002finite} proved that the relationship \ref{Hodge 2} holds for the Whitney forms $\mathcal{W}^1_{k,0}$ (or denoted as the first kind N{\'e}d{\'e}lec family $ H_0(\text{curl};\Omega) \cap N_{k-1}(\mathcal{T}_h) $ as in \cite{monk2003finite}). However, the proof also applies for the second kind N{\'e}d{\'e}lec family, which is the whole polynomial space $ H_0(\text{curl};\Omega) \cap [\mathcal{P}_k(\mathcal{T}_h)]^3 $. \par
Besides, We further invoke \cite[Theorem 2.2]{weber1980local} or \cite[Corollary 3.49]{monk2003finite}
\begin{lemma}
    Under the current setting that $ \Omega $ is simply connected, the following embedding is compact:
    $$ H_0(\text{curl};\Omega) \cap H(\text{div }0;\Omega) \overset{compact}{\hookrightarrow} [L^2(\Omega)]^3. $$
\end{lemma}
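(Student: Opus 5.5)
The plan is to reduce the statement to Rellich's theorem, using a regular decomposition of $H_0(\text{curl};\Omega)$ followed by an elliptic argument for the gradient part. I would show that every bounded sequence $\{\bm{v}_n\}$ in $H_0(\text{curl};\Omega)\cap H(\text{div }0;\Omega)$ has a subsequence converging strongly in $[L^2(\Omega)]^3$. Boundedness here is in the $H(\text{curl};\Omega)$ norm, since the divergence vanishes.

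\emph{Step 1 (regular decomposition).} On a bounded Lipschitz polyhedron, every $\bm{v}\in H_0(\text{curl};\Omega)$ admits a splitting
\begin{equation*}
\bm{v}=\bm{z}+\nabla\varphi,\qquad \bm{z}\in[H^1_0(\Omega)]^3,\ \varphi\in H^1_0(\Omega),
\end{equation*}
with the stability bound $\|\bm{z}\|_{H^1(\Omega)}+\|\varphi\|_{H^1(\Omega)}\le C\|\bm{v}\|_{H(\text{curl};\Omega)}$. This is the regular decomposition of Pasciak--Zhao and Hiptmair--Pechstein. One way to obtain it is to extend $\nabla\times\bm{v}$ by zero and take a bounded $H^1$ vector potential of it. I expect this step to be the main obstacle if it is to be made self-contained rather than cited. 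The delicate point is obtaining $\bm{z}\in H^1_0$, and not merely $H^1$, on a nonconvex polyhedron. If only $\bm{z}\in[H^1(\Omega)]^3$ with $\bm{n}\times\bm{z}$ controlled were available, I would fall back on the $H^{1/2+\delta}$ regularity of $H_0(\text{curl})\cap H(\text{div})$ from Amrouche--Bernardi--Dauge--Girault, combined with the compact embedding $H^{1/2+\delta}\hookrightarrow L^2$.

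\emph{Step 2 (compactness of the $H^1$ part).} Applying Step 1 to each $\bm{v}_n=\bm{z}_n+\nabla\varphi_n$ gives a sequence $\{\bm{z}_n\}$ bounded in $[H^1_0(\Omega)]^3$. By Rellich's theorem, a subsequence converges strongly in $[L^2(\Omega)]^3$. Consequently $\nabla\cdot\bm{z}_n$ converges strongly in $H^{-1}(\Omega)$, because $\langle\nabla\cdot\bm{w},q\rangle=-(\bm{w},\nabla q)$ for $q\in H^1_0$ gives $\|\nabla\cdot\bm{w}\|_{H^{-1}}\le\|\bm{w}\|_{L^2}$.

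\emph{Step 3 (the gradient part).} Since $\nabla\cdot\bm{v}_n=0$, the potential satisfies $\Delta\varphi_n=-\nabla\cdot\bm{z}_n$ in $H^{-1}(\Omega)$ with $\varphi_n\in H^1_0(\Omega)$. The Dirichlet Laplacian is an isomorphism $H^1_0(\Omega)\to H^{-1}(\Omega)$ by Lax--Milgram, so $\varphi_n=-\Delta^{-1}(\nabla\cdot\bm{z}_n)$ converges strongly in $H^1_0(\Omega)$. Hence $\nabla\varphi_n$ converges in $[L^2(\Omega)]^3$, and so does $\bm{v}_n=\bm{z}_n+\nabla\varphi_n$, which proves compactness. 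This step needs no elliptic regularity beyond $H^1$. That matters, because on a nonconvex domain $H^2$ regularity of $\varphi$ fails, and this argument sidesteps it.
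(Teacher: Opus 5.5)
Your argument is correct, granting the cited $H^1_0$ regular decomposition, but it follows a different route from the paper. The paper proves nothing at this point. It cites Weber--Werner's local compactness theorem or Monk's Corollary~3.49, and the latter rests on the continuous embedding of $H_0(\text{curl};\Omega)\cap H(\text{div};\Omega)$ into $[H^{\frac12+\delta}(\Omega)]^3$ on Lipschitz polyhedra, followed by Rellich. That is essentially your fallback.

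The core of your Steps~2--3 is the identity $\bm{v}=(I-P)\bm{z}$, where $P$ is the $L^2$-orthogonal projection onto $\nabla H^1_0(\Omega)$. The divergence constraint forces $\nabla\varphi=-P\bm{z}$, so compactness is inherited from $[H^1_0(\Omega)]^3\hookrightarrow[L^2(\Omega)]^3$ through a bounded operator. You can skip the $H^{-1}$ detour: testing with $\varphi_n-\varphi_m$ gives $\|\nabla(\varphi_n-\varphi_m)\|_{L^2(\Omega)}\le\|\bm{z}_n-\bm{z}_m\|_{L^2(\Omega)}$ directly.

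\emph{What your route buys:} it needs only $H^1$-level ellipticity and no topological hypothesis. The Hiptmair--Pechstein decomposition holds on any bounded Lipschitz domain, so simple connectivity plays no role, and all the geometric difficulty is packed into that one cited theorem.

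\emph{What it does not give:} any fractional smoothness. The citation route yields the stronger $H^{\frac12+\delta}$ embedding. That is the same regularity behind the $h^{\frac12+\delta}$ factor in the Hodge-mapping estimate invoked just before this lemma, so for the paper it costs nothing extra.
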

Moreover, \cite[(6.7)]{ern2017finite} has raised an \textit{averaging operator} such that
\begin{lemma}
    There exists a well defined operator
    $$ \mathcal{J}^{av}_{h,0}: \mathcal{P}_k(\mathcal{T}_h) \to \mathcal{P}_k(\mathcal{T}_h) \cap H^1_0(\Omega), $$
    such that (\cite[Lemma 6.2]{ern2017finite})
    \begin{equation*}
        | v_h - \mathcal{J}^{av}_{h,0}(v_h) |_{W^{m,p}(K)} \le C h_K^{d(\frac{1}{p}-\frac{1}{r})+\frac{1}{r}-m}\sum_{ F \in \partial K } \| [\![ \nabla \times v_h ]\!] \|_{L^2(F)},\; \forall v_h \in \mathcal{P}_k(K), \; m \le k+1.
    \end{equation*}
\end{lemma}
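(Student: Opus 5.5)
We read the statement as the standard estimate for the nodal averaging (Oswald-type) operator of \cite{ern2017finite}. Two corrections are needed. The jump on the right-hand side should be $[\![ v_h ]\!]$ measured in $L^r(F)$, since $v_h$ is scalar and the exponent contains $r$. The sum over faces should run over the faces of the vertex patch of $K$, which has uniformly bounded cardinality by shape regularity, rather than only $F\subset\partial K$. With these corrections the exponent $d(\frac1p-\frac1r)+\frac1r-m$ is exactly what scaling produces. The plan is to construct $\mathcal{J}^{av}_{h,0}$ explicitly, write $v_h-\mathcal{J}^{av}_{h,0}(v_h)$ on a cell $K$ as a combination of Lagrange basis functions with coefficients given by nodal discrepancies, and bound those discrepancies by face jumps.

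\textbf{Construction.} Let $\{a\}$ be the Lagrange nodes of degree $k$ on $\mathcal{T}_h$ and $\mathcal{T}_a$ the cells containing $a$. For an interior node $a$ we set
$$\mathcal{J}^{av}_{h,0}(v_h)(a) := \frac{1}{|\mathcal{T}_a|}\sum_{K'\in\mathcal{T}_a} v_h|_{K'}(a).$$
For a node $a\in\partial\Omega$ we set $\mathcal{J}^{av}_{h,0}(v_h)(a):=0$. Extending by the global continuous Lagrange basis $\{\varphi_a\}$ gives an element of $\mathcal{P}_k(\mathcal{T}_h)\cap H^1_0(\Omega)$, so the operator is well defined.

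\textbf{Reduction to nodal values.} On $K$,
$$v_h-\mathcal{J}^{av}_{h,0}(v_h)=\sum_{a\in K}\big(v_h|_K(a)-\mathcal{J}^{av}_{h,0}(v_h)(a)\big)\varphi_a|_K .$$
By the affine mapping to the reference element and shape regularity, $|\varphi_a|_{W^{m,p}(K)}\le C h_K^{d/p-m}$. The number of nodes per cell is bounded, so it suffices to bound each nodal discrepancy by $C h_K^{-(d-1)/r}\sum_F\|[\![ v_h ]\!]\|_{L^r(F)}$.

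\textbf{Interior nodes.} For an interior node $a$, the discrepancy is an average of differences $v_h|_K(a)-v_h|_{K'}(a)$ with $K'\in\mathcal{T}_a$. Since $\Omega$ is a Lipschitz polyhedron, the patch $\mathcal{T}_a$ is face-connected. We therefore join $K$ to $K'$ through a chain of cells in $\mathcal{T}_a$, consecutive ones sharing a face $F\ni a$; the chain length is bounded by shape regularity. Telescoping bounds the difference by a sum of $|[\![ v_h ]\!]_F(a)|$. Each term satisfies
$$|[\![ v_h ]\!]_F(a)|\le \|[\![ v_h ]\!]\|_{L^\infty(F)}\le C h_F^{-(d-1)/r}\|[\![ v_h ]\!]\|_{L^r(F)},$$
by the inverse inequality on the $(d-1)$-dimensional polynomial space on $F$ (Lemma \ref{inv ineq} applied on $F$, or plain equivalence of norms plus scaling).

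\textbf{Boundary nodes.} For $a\in\partial\Omega$ the discrepancy is $v_h|_K(a)$ itself. We walk from $K$ through $\mathcal{T}_a$ to a cell having a boundary face $F\ni a$, where $v_h(a)=[\![ v_h ]\!]_F(a)$ by the convention on boundary faces, and then argue as in the interior case.

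\textbf{Combination and main obstacle.} Combining the last three steps gives the exponent $d/p-m-(d-1)/r=d(\frac1p-\frac1r)+\frac1r-m$. The main obstacle is the connectivity argument for nodes lying on edges or vertices of $\partial\Omega$. There one needs that every cell of $\mathcal{T}_a$ reaches a boundary face through face-adjacent cells within the patch, with uniformly bounded chain length. We would justify this from the Lipschitz property of $\Omega$, which excludes pinched configurations, and from the uniform bound on $|\mathcal{T}_a|$ given by shape regularity. The remaining steps are routine scaling.
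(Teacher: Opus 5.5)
Your proposal is correct and follows essentially the route the paper relies on: the paper gives no argument of its own beyond quoting Lemma 6.2 of \cite{ern2017finite}, whose proof is this same nodal-averaging argument, and your two corrections (jumps of $v_h$ itself in $L^r(F)$, summed over the faces of the vertex patch of $K$) match what that lemma actually states. The printed version with only $F\subset\partial K$ is false for the averaging operator: take $v_h$ supported on one cell that meets $K$ only at an interior vertex. This is harmless once the estimate is summed over $K$, which is how the paper uses it.

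For the boundary-node step you flag, the property doing the work is $\operatorname{int}\overline{\Omega}=\Omega$ (guaranteed by the Lipschitz assumption), not the absence of pinches. If the face-connected component of $K$ in $\mathcal{T}_a$ contained no boundary face through $a$, every face through $a$ of its cells would be shared within it, so its union would contain a ball around $a$ and force $a\in\Omega$. The same reasoning gives face-connectedness of $\mathcal{T}_a$ for interior nodes without any Lipschitz assumption.
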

\begin{proof}[Proof of Lemma \ref{step 1}]
    We denote by $ \overset{X}{\to} $ and $ \overset{X}{\rightharpoonup} $ the strong convergence and the weak convergence in the Banach space $ X $, respectively. 
    \begin{enumerate}
        \item We start by considering the \textit{Hodge mapping} of $ \{ \bm{u}_h\} $, 
        $$ \mathtt{H}_\varepsilon (\bm{u}_h) \in H_0(\text{curl}^2;\Omega) \cap H(\text{div }0;\Omega) $$
        such that by \ref{Hodge 1} and \ref{Hodge 2}, 
        \begin{equation*}
            \begin{aligned}
                \nabla \times \mathtt{H}_\varepsilon (\bm{u}_h) &= \nabla \times \bm{v}; \\
                \quad \nabla \cdot \mathtt{H}_\varepsilon (\bm{u}_h) & = 0; \\
                \| \bm{u_h} - \mathtt{H}_\varepsilon (\bm{u_h}) \|_{L^2(\Omega)} & \le C h^{\frac{1}{2}+\delta} \| \nabla \times \bm{u}_h \|_{L^2(\Omega)}.
            \end{aligned}
        \end{equation*}
        We emphasize that the constant $ C $ raised in \ref{well-posed} is uniform for the mesh size $ h $, which means that $ \{ \bm{u}_h\} $ is bounded in $ H_0(\text{curl};\Omega) $. Therefore, $ \{\mathtt{H}_\varepsilon (\bm{u}_h) \} $ is also bounded in $ H_0(\text{curl};\Omega) \cap H(\text{div }0;\Omega) $. Thus, the \textit{compactness} of the following embedding
        $$ H_0(\text{curl};\Omega) \cap H(\text{div }0;\Omega) \overset{c}{\hookrightarrow} [L^2(\Omega)]^3 $$
        assigns to $ \{\mathtt{H}_\varepsilon (\bm{u}_h) \} $ an $ [L^2(\Omega)]^3 $ strongly limit point, which we call $ \bm{u}_0 $, and by \ref{Hodge 2}, $ \bm{u}_0 $ is also an $ [L^2(\Omega)]^3 $ strongly limit point of $ \{ \bm{u}_h\} $. 
        \item Notice that the condition $ \bm{u}_h \in H_0(\text{curl};\Omega) $ enforces that $  \bm{n}_F \times [\![ \bm{u}_h ]\!] = 0 $ on $ \mathcal{F}_h $, which implies $  \bm{n}_F \cdot [\![ \nabla \times \bm{u}_h ]\!] = 0 $ on $ \mathcal{F}_h $. That is to say, the jump of $ \nabla \times \bm{u}_h $ only possesses tangential components:
        $$ \| [\![ \nabla \times \bm{u}_h ]\!] \|_{L^2(F)} = \|  \bm{n}_F \times [\![ \nabla \times \bm{u}_h ]\!] \|_{L^2(F)}, \forall F \in \mathcal{F}_h. $$
        Therefore, considering the average mapping $ \{ \mathcal{J}^{av}_{h,0} (\nabla \times \bm{u}_h) \} $, we have
        \begin{equation}\label{average operator}
            \begin{aligned}
                \sum_{K \in \mathcal{T}_h} \| \nabla \times \bm{u}_h - \mathcal{J}^{av}_{h,0}(\nabla \times \bm{u}_h)) \|_{L^2(K)} & \le C \sum_{ F \in \mathcal{F}_h } h_F^{\frac{1}{2}} \|  \bm{n}_F \times [\![ \nabla \times \bm{u}_h ]\!] \|_{L^2(F)}; \\
                \sum_{K \in \mathcal{T}_h} \| D(\nabla \times \bm{u}_h) - D\mathcal{J}^{av}_{h,0}(\nabla \times \bm{u}_h)) \|_{L^2(K)} & \le C \sum_{ F \in \mathcal{F}_h } h_F^{-\frac{1}{2}} \|  \bm{n}_F \times [\![ \nabla \times \bm{u}_h ]\!] \|_{L^2(F)}. 
            \end{aligned}
        \end{equation}
        Notice that $ \sum_{ F \in \mathcal{F}_h } h_F^{-\frac{1}{2}} \|  \bm{n}_F \times [\![ \nabla \times \bm{u}_h ]\!] \|_{L^2(F)} $ is bounded by $ \|\bm{f}\|_{L^2(\Omega)} $, as in \ref{energy estimate}. Thus, $ \{ \mathcal{J}^{av}_{h,0} (\nabla \times \bm{u}_h) \} $ turns out to be a bounded sequence in $ [H^1_0(\Omega)]^3 $, and hence admits an $ H^1_0(\Omega) $-weak limit $ \bm{s}_0 $. Moreover, the Rellich–Kondrachov Theorem suggests that $ \bm{s}_0 $ is also an $ [L^2(\Omega)]^3 $ strongly limit of $ \{ \mathcal{J}^{av}_{h,0} (\nabla \times \bm{u}_h) \} $, and by the first line of \ref{average operator}, an $ [L^2(\Omega)]^3 $ strongly limit of $ \{ \nabla \times \bm{u}_h \} $. \par 
        Now we figure out that $ \bm{s}_0 = \nabla \times \bm{u}_0 $. This follows from the argument
        $$ (\bm{u}_0, \bm{\phi})_\Omega = \lim_{h \to \infty} (\nabla \times \bm{u}_h, \bm{\phi} )_\Omega = \lim_{h \to \infty} - (\bm{u}_h, \nabla \times \bm{\phi} )_\Omega = - (\bm{u}_0, \nabla \times \bm{\phi} )_\Omega, \; \forall \bm{\phi} \in [C^\infty_0(\Omega)]^3. $$
        \item By the energy estimate $ \| p_h \|_{ H^1_0(\Omega) } \le C \| \bm{f} \|_{L^2(\Omega)} $, $ \{ p_h \} $ admits an $ H^1_0(\Omega) $ weakly, $ L^2(\Omega) $ strongly limit $p_0$. 
        \end{enumerate}
    In conclusion, we have raised $ \bm{u}_0 \in H_0 (\text{curl}^2;\Omega) $ and $ p_0 \in H^1_0(\Omega) $ such that a subsequence of $ \{(\bm{u}_h,p_h)\}_{ h \in \mathcal{H} } $, which we still denote by the same notation, satisfies
    \begin{equation}\label{step 1.1}
        \begin{aligned}
            \bm{u}_h & \overset{L^2(\Omega)}{\to} \bm{u}_0, \quad h \to 0 ;\\
            \nabla \times \bm{u}_h & \overset{L^2(\Omega)}{\to} \nabla \times \bm{u}_0, \quad h \to 0 ;\\
            \mathcal{J}^{av}_{h,0} (\nabla \times \bm{u}_h) & \overset{H^1_0(\Omega)}{\rightharpoonup} \nabla \times \bm{u}_0, \quad h \to 0 ;\\
            \nabla \cdot \bm{u}_0 & = 0;\\
            p_h & \overset{H^1_0(\Omega)}{\rightharpoonup} p_0, \quad h \to 0. 
        \end{aligned}
    \end{equation}
\end{proof}

\begin{proof}[proof of lemma \ref{step 2}]
    Next, we show that $ (\bm{u}_0,p_0) $ raised above actually solves the weak formulation \ref{weak form}. By \cite[Theorem 2.3]{chen2021analysis}, which states that $ [C^\infty_0(\Omega)]^3 $ is dense in $ H_0(\text{curl}^2; \Omega) $, we just need to verify that \ref{weak form} holds per test function from $ [C^\infty_0(\Omega)]^3 $. For an arbitrary $ \bm{v} \in [C^\infty_0(\Omega)]^3 $, we set $ \bm{v}_h := \Pi^{\text{curl}}_{h,k}\bm{v} $ as the second N{\'e}d{\'e}lec interpolation. Because of the smoothness of $ \bm{v} $ and the assumption that $ k \ge 2 $, we can utilize \ref{Nedelec est 1} and \ref{Nedelec est 2} to deduce the approximation results at least as good as below:
    \begin{equation}\label{est on test v}
        \begin{aligned}
            \| \bm{v} - \bm{v}_h \|_{L^2(K)} & \le C h_K^2 ( \| \bm{v} \|_{H^2(K)} + h_K \| \nabla \times \bm{v} \|_{H^2(K)}) ; \\
            \| \nabla \times (\bm{v} - \bm{v}_h) \|_{L^2(K)} & \le C h_K^2 \| \nabla \times \bm{v} \|_{H^2(K)}; \\
            \| (\nabla \times)^2 (\bm{v} - \bm{v}_h) \|_{L^2(K)} & \le C h_K^{1} \| \nabla \times \bm{v} \|_{H^{2}(K)}; \\
            h_F^{-\frac{1}{2}} \|  \bm{n}_F \times [\![\nabla\times (\bm{v} - \bm{v}_h) ]\!]  \|_{L^2(F)} & \le C h_K^{1} \| \nabla \times \bm{v} \|_{H^{2}(K)}; \\
            h_F^{\frac{1}{2}} \| \{\!\{A(\nabla\times\nabla\times (\bm{v} - \bm{v}_h)\}\!\} \|_{L^2(F)} & \le C ( h_K^{1} \| \nabla \times \bm{v} \|_{H^{2}(K)} + h_K^{1} \| (\nabla \times)^2 \bm{v} \|_{H^{1}(K)}  ).
        \end{aligned}
    \end{equation}
    Now we split the weak formulation as follows:
    \begin{align*}
        & (A (\nabla \times)^2 \bm{u}_0, (\nabla \times)^2 \bm{v} )_\Omega + ( \nabla p_0, \bm{v} )_{ \Omega } \\
        & = \sum_{ K \in \mathcal{T}_h }(A (\nabla \times)^2 (\bm{u}_0 - \bm{u}_h), (\nabla \times)^2 \bm{v} )_{ K } + ( \nabla (p_0 - p_h), \bm{v} )_{ K } + \sum_{F\in\mathcal{F}_h} \langle \{\!\{ A(\nabla\times)^2 \bm{v}) \}\!\},  \bm{n}_F \times [\![ \nabla\times \bm{u}_h ]\!] \rangle_{F} \\
        & \quad + \sum_{ K \in \mathcal{T}_h } (A (\nabla \times)^2 \bm{u}_h, (\nabla \times)^2 (\bm{v} - \bm{v}_h ) )_{ K } + ( \nabla p_h, \bm{v} - \bm{v}_h )_{ K } \\
        & \quad + \sum_{ K \in \mathcal{T}_h } (A (\nabla \times)^2 \bm{u}_h, (\nabla \times)^2 \bm{v}_h ) )_{ K } + ( \nabla p_h, \bm{v}_h )_{ K } -  \sum_{F\in\mathcal{F}_h} \langle \{\!\{ A(\nabla\times)^2 \bm{v}) \}\!\},  \bm{n}_F \times [\![ \nabla\times \bm{u}_h ]\!] \rangle_{F} \\
        & =: I + I\!I + I\!I\!I
    \end{align*}
    \begin{enumerate}
        \item To analyze $ I $, we first decompose as
        $$ \sum_{ K \in \mathcal{T}_h }(A (\nabla \times)^2 (\bm{u}_0 - \bm{u}_h), (\nabla \times)^2 \bm{v} )_{ K } = \sum_{ K \in \mathcal{T}_h } \sum_{ i \in \Lambda } (A (\nabla \times)^2 (\bm{u}_0 - \bm{u}_h), (\nabla \times)^2 \bm{v} )_{ K \cap \Omega_i }. $$ 
        We claim that the integration by parts on each $ K \cap \Omega_i $ are admissible. To see this, note that the polyhedral $ \Omega_i $ admits a partition into simplex (tetrahedra): $ \Omega_i : = \bigsqcup_{j \in J} P_j $, and each region $ P_j \cap K $ is convex, thus assigned with a well defined outward unit normal vector field $ \bm{n}_j $. Nevertheless, the will be no conflict on the direction of $ \bm{n}_j $ for a face shared by two simplex, since $ \Omega_i $ is Lipschitz. In conclusion, the computation goes 
        \begin{align*}
            & (A (\nabla \times)^2 (\bm{u}_0 - \bm{u}_h), (\nabla \times)^2 \bm{v} )_{ K \cap \Omega_i } \\
            & = (A \nabla \times (\nabla \times \bm{u}_0 - \mathcal{J}^{av}_{h0}(\nabla \times \bm{u}_h)), (\nabla \times)^2 \bm{v} )_{ K \cap \Omega_i } \\
            & \quad + (A \nabla \times (\mathcal{J}^{av}_{h0}(\nabla \times \bm{u}_h) - \nabla \times \bm{u}_h ), (\nabla \times)^2 \bm{v} )_{ K \cap \Omega_i } \\
            & = (A \nabla \times (\nabla \times \bm{u}_0 - \mathcal{J}^{av}_{h0}(\nabla \times \bm{u}_h)), (\nabla \times)^2 \bm{v} )_{ K \cap \Omega_i } \\
            & \quad + (\mathcal{J}^{av}_{h0}(\nabla \times \bm{u}_h) - \nabla \times \bm{u}_h, \nabla \times A (\nabla \times)^2 \bm{v} )_{ K \cap \Omega_i } \\
            & \quad + \langle  \bm{n}_{ \partial (\text{int}(K) \cap \Omega_i) } \times ( \mathcal{J}^{av}_{h0}(\nabla \times \bm{u}_h) - \nabla \times \bm{u}_h ), A (\nabla \times)^2 \bm{v} \rangle_{ \partial (\text{int} (K) \cap \Omega_i) }.
        \end{align*}
        This expression seems to derive a gigantic integral on $ \bigcup_{ K \in \mathcal{T}_h } \bigcup_{ i \in \Lambda } \partial (\text{int} (K) \cap \Omega_i) $ when summing up. We shall figure out, however, that most terms cancel each other indeed. To reveal this, we denote
        $$ \Sigma := \bigcup_{ i \in \Lambda } (\partial \Omega_i) \backslash \partial \Omega, $$
        and a unit normal vector field $  \bm{n}_\Sigma $ consistent with the orientation is assigned. Notice that $ \partial (\text{int} (K) \cap \Omega_i) \subset \partial K \cup \partial \Omega_i $, where $ \partial \Omega_i $ is contained in the interior of $ K $ only if it belongs to $ \Sigma $. Therefore, we can always apply the following splitting: 
        $$ \bigcup_{ i \in \Lambda } \partial (\text{int} (K) \cap \Omega_i) = \partial K \sqcup (\Sigma \cap \text{int} (K)), $$
        where the interior of $ K $ is defined to be open, disjoint with $ \partial K $. Nevertheless, the direction of $  \bm{n}_{ \partial (\text{int} (K) \cap \Omega_i) } $ is the same as $  \bm{n}_{ \partial K } $ on $ \partial K $. Thus,
        \begin{align*}
            & \sum_{ K \in \mathcal{T}_h } \sum_{ i \in \Lambda } \langle  \bm{n}_{ \partial (\text{int} (K) \cap \Omega_i) } \times ( \mathcal{J}^{av}_{h0}(\nabla \times \bm{u}_h) - \nabla \times \bm{u}_h ), A (\nabla \times)^2 \bm{v} \rangle_{ \partial (\text{int} (K) \cap \Omega_i) } \\
            & = \sum_{ K \in \mathcal{T}_h } \langle  \bm{n}_{ \partial K } \times ( \mathcal{J}^{av}_{h0}(\nabla \times \bm{u}_h) - \nabla \times \bm{u}_h ), A (\nabla \times)^2 \bm{v} \rangle_{ \partial K } \\
            & \quad + \sum_{ K \in \mathcal{T}_h } \sum_{ i \in \Lambda } \langle  \bm{n}_{ \partial (\text{int} (K) \cap \Omega_i) } \times ( \mathcal{J}^{av}_{h0}(\nabla \times \bm{u}_h) - \nabla \times \bm{u}_h ), A (\nabla \times)^2 \bm{v} \rangle_{ \partial \Omega_i \cap \text{int} (K)}.
        \end{align*}
        \begin{itemize}
            \item Because of the assertion $ \bm{v} \in C^\infty_c(\Omega) $, the integral on $ \partial \Omega $ vanishes:
            \begin{align*}
                \sum_{ K \in \mathcal{T}_h } \langle  \bm{n}_{ \partial K } \times ( \mathcal{J}^{av}_{h0}(\nabla \times \bm{u}_h) - \nabla \times \bm{u}_h ), A (\nabla \times)^2 \bm{v} \rangle_{ \partial K \cap \partial \Omega } = 0.
            \end{align*}
            It costs nothing to rewrite this as 
            $$ \sum_{ F \in \mathcal{T}^\partial_h } \langle  \bm{n}_{ F } \times [\![ \mathcal{J}^{av}_{h0}(\nabla \times \bm{u}_h) - \nabla \times \bm{u}_h ]\!], \{\!\{ A (\nabla \times)^2 \bm{v} \}\!\} \rangle_{ F } $$
            Meanwhile, referring to the definition of $ [\![ \cdot ]\!] $, the calculations on internal interfaces can be coupled as
            \begin{align*}
                & \sum_{ K \in \mathcal{T}_h } \langle  \bm{n}_{ \partial K } \times ( \mathcal{J}^{av}_{h0}(\nabla \times \bm{u}_h) - \nabla \times \bm{u}_h ), A (\nabla \times)^2 \bm{v} \rangle_{ \partial K \cap \text{int} (\Omega) } \\
                & = \sum_{ F \in \mathcal{T}^\text{int}_h } \langle  \bm{n}_{ \partial K^+ } \times ( \mathcal{J}^{av}_{h0}(\nabla \times \bm{u}_h) - \nabla \times \bm{u}_h )|_{K^+}, A (\nabla \times)^2 \bm{v}|_{K^+} \rangle_{ F } \\ 
                & \quad + \langle  \bm{n}_{ \partial K^- } \times ( \mathcal{J}^{av}_{h0}(\nabla \times \bm{u}_h) - \nabla \times \bm{u}_h )|_{K^-}, A (\nabla \times)^2 \bm{v}|_{K^-} \rangle_{ F } \\
                & = \sum_{ F \in \mathcal{T}^\text{int}_h } \langle  \bm{n}_{ F } \times [\![ \mathcal{J}^{av}_{h0}(\nabla \times \bm{u}_h) - \nabla \times \bm{u}_h ]\!], \{\!\{ A (\nabla \times)^2 \bm{v} \}\!\} \rangle_{ F } \\
                & \quad + \langle  \bm{n}_{ F } \times \{\!\{ \mathcal{J}^{av}_{h0}(\nabla \times \bm{u}_h) - \nabla \times \bm{u}_h \}\!\}, [\![ A (\nabla \times)^2 \bm{v} ]\!] \rangle_{ F }.
            \end{align*}
            Notice that $ [\![ A (\nabla \times)^2 \bm{v} ]\!] $ is non-vanishing on $ \Sigma $. Therefore, we assemble
            \begin{align*}
                & \sum_{ K \in \mathcal{T}_h } \langle  \bm{n}_{ \partial K } \times ( \mathcal{J}^{av}_{h0}(\nabla \times \bm{u}_h) - \nabla \times \bm{u}_h ), A (\nabla \times)^2 \bm{v} \rangle_{ \partial K } \\
                & = \sum_{F \in \mathcal{T}_h } \langle  \bm{n}_{ F } \times [\![ \mathcal{J}^{av}_{h0}(\nabla \times \bm{u}_h) - \nabla \times \bm{u}_h ]\!], \{\!\{ A (\nabla \times)^2 \bm{v} \}\!\} \rangle_{ F } \\
                & \quad + \langle  \bm{n}_{ F } \times \{\!\{ \mathcal{J}^{av}_{h0}(\nabla \times \bm{u}_h) - \nabla \times \bm{u}_h \}\!\}, [\![ A (\nabla \times)^2 \bm{v} ]\!] \rangle_{ F \cap \Sigma }.
            \end{align*}
            \item Notice that 
            $$ \bigcup_{ i \in \Lambda } \partial (\text{int} (K) \cap \Omega_i) \cap \text{int} (K) = \Sigma \cap \text{int} (K), $$ 
            where $ \Sigma \cap \text{int} (K) = \bigcup_{i \ne j} \partial \Omega_i \cap \partial \Omega_j \cap \text{int} (K) $, with the unit normal vectors $  \bm{n}_{ \partial (\text{int} (K) \cap \Omega_i) } =  \bm{n}_\Sigma $, $  \bm{n}_{ \partial (K \cap \Omega_j) } = -  \bm{n}_\Sigma $. Therefore, 
            \begin{align*}
                & \sum_{ K \in \mathcal{T}_h } \sum_{ i \in \Lambda } \langle  \bm{n}_{ \partial (\text{int} (K) \cap \Omega_i) } \times ( \mathcal{J}^{av}_{h0}(\nabla \times \bm{u}_h) - \nabla \times \bm{u}_h ), A (\nabla \times)^2 \bm{v} \rangle_{ \partial \Omega_i \cap \text{int} (K)} \\
                & = \sum_{ K \in \mathcal{T}_h } \langle  \bm{n}_{ F } \times \mathcal{J}^{av}_{h0}(\nabla \times \bm{u}_h) - \nabla \times \bm{u}_h, [\![ A (\nabla \times)^2 \bm{v} ]\!] \rangle_{ \Sigma \cap \text{int} (K)}
            \end{align*}
            Since $ \nabla \times \bm{u}_h $ and $ \mathcal{J}^{av}_{h0}(\nabla \times \bm{u}_h) $ are smooth within the interior of $ K $, we can rewrite the term above as 
            $$ \sum_{ K \in \mathcal{T}_h } \langle  \bm{n}_{ F } \times \{\!\{ \mathcal{J}^{av}_{h0}(\nabla \times \bm{u}_h) - \nabla \times \bm{u}_h \}\!\}, [\![ A (\nabla \times)^2 \bm{v} ]\!] \rangle_{ \Sigma \cap \text{int} (K)}. $$
        \end{itemize}
        In conclusion, the terms 
        $$ \langle  \bm{n}_{ F } \times \{\!\{ \mathcal{J}^{av}_{h0}(\nabla \times \bm{u}_h) - \nabla \times \bm{u}_h \}\!\}, [\![ A (\nabla \times)^2 \bm{v} ]\!] \rangle $$
        are calculated on two types of shapes: $ F \cap \Sigma $ for $ F \in \mathcal{F}_h $, and $ \Sigma \cap \text{int} (K) $ for $ K \in \mathcal{T}_h $, exactly covering $ \Sigma $. Hence, we summarize
        \begin{align*}
            & \sum_{ K \in \mathcal{T}_h }(A (\nabla \times)^2 (\bm{u}_0 - \bm{u}_h), (\nabla \times)^2 \bm{v} )_{ K } \\
            & = \sum_{ K \in \mathcal{T}_h }(A \nabla \times (\nabla \times \bm{u}_0 - \mathcal{J}^{av}_{h0}(\nabla \times \bm{u}_h)), (\nabla \times)^2 \bm{v} )_{ K } \\
            & \quad + \sum_{ K \in \mathcal{T}_h } (\mathcal{J}^{av}_{h0}(\nabla \times \bm{u}_h) - \nabla \times \bm{u}_h, \nabla \times A (\nabla \times)^2 \bm{v} )_{ K } \\
            & \quad + \sum_{F \in \mathcal{T}_h } \langle  \bm{n}_{ F } \times [\![ \mathcal{J}^{av}_{h0}(\nabla \times \bm{u}_h) - \nabla \times \bm{u}_h ]\!], \{\!\{ A (\nabla \times)^2 \bm{v} \}\!\} \rangle_{ F } \\
            & \quad + \langle  \bm{n}_{ \Sigma } \times \{\!\{ \mathcal{J}^{av}_{h0}(\nabla \times \bm{u}_h) - \nabla \times \bm{u}_h \}\!\}, [\![ A (\nabla \times)^2 \bm{v} ]\!] \rangle_{ \Sigma },
        \end{align*}
        and 
        \begin{align*}
            I & = \sum_{ K \in \mathcal{T}_h } (A (\nabla \times)^2 (\bm{u}_0 - \bm{u}_h), (\nabla \times)^2 \bm{v} )_{ K } + ( \nabla (p_0 - p_h), \bm{v} )_{ K } \\
            & \quad + \sum_{F\in\mathcal{F}_h} \langle \{\!\{ A(\nabla\times)^2 \bm{v}) \}\!\},  \bm{n}_F \times [\![ \nabla\times \bm{u}_h ]\!] \rangle_{F} \\
            & = \sum_{ K \in \mathcal{T}_h } (A \nabla \times (\nabla \times \bm{u}_0 - \mathcal{J}^{av}_{h0}(\nabla \times \bm{u}_h)), (\nabla \times)^2 \bm{v} )_{ K } \\
            & \quad + \sum_{ K \in \mathcal{T}_h } (\mathcal{J}^{av}_{h0}(\nabla \times \bm{u}_h) - \nabla \times \bm{u}_h, \nabla \times A (\nabla \times)^2 \bm{v} )_{ K } \\
            & \quad + \sum_{F \in \mathcal{T}_h } \langle  \bm{n}_{ F } \times [\![ \mathcal{J}^{av}_{h0}(\nabla \times \bm{u}_h) - \nabla \times \bm{u}_h ]\!], \{\!\{ A (\nabla \times)^2 \bm{v} \}\!\} \rangle_{ F } \\
            & \quad + \langle  \bm{n}_{ \Sigma } \times \{\!\{ \mathcal{J}^{av}_{h0}(\nabla \times \bm{u}_h) - \nabla \times \bm{u}_h \}\!\}, [\![ A (\nabla \times)^2 \bm{v} ]\!] \rangle_{ \Sigma } \\
            & \quad + \sum_{ K \in \mathcal{T}_h } ( \nabla (p_0 - p_h), \bm{v} )_{ K } \\
            & \quad + \sum_{F\in\mathcal{F}_h} \langle \{\!\{ A(\nabla\times)^2 \bm{v}) \}\!\},  \bm{n}_F \times [\![ \nabla\times \bm{u}_h ]\!] \rangle_{F} \\
            & =: I_1 + I_2 + I_3 + I_4 + I_5 + I_6
        \end{align*}
        \begin{itemize}
            \item By the third line of \ref{step 1.1}, the weak convergence $ \mathcal{J}^{av}_{h,0} (\nabla \times \bm{u}_h) \overset{H^1_0(\Omega)}{\rightharpoonup} \nabla \times \bm{u}_0 $ implies
            $$ I_1 = (A \nabla \times (\nabla \times \bm{u}_0 - \mathcal{J}^{av}_{h0}(\nabla \times \bm{u}_h)), (\nabla \times)^2 \bm{v} )_\Omega \to 0, \quad h \to 0; $$
            \item By \ref{average operator} and \ref{energy estimate}
            \begin{align*}
                | I_2 | & \le \sum_{ K \in \mathcal{T}_h } | (\mathcal{J}^{av}_{h0}(\nabla \times \bm{u}_h) - \nabla \times \bm{u}_h, \nabla \times A (\nabla \times)^2 \bm{v} )_{ K } | \\
                & \le \sum_{K \in \mathcal{T}_h} \| (\mathcal{J}^{av}_{h0}(\nabla \times \bm{u}_h) - \nabla \times \bm{u}_h \|_{L^2(K)} \| \nabla \times A (\nabla \times)^2 \bm{v} \|_{L^2(K)} \\
                &  \le C \sum_{ F \in \mathcal{F}_h } h_F^{\frac{1}{2}} \|  \bm{n}_F \times [\![ \nabla \times \bm{u}_h ]\!] \|_{L^2(F)} \| A \|_{L^\infty(\Omega)} \| \bm{v} \|_{H^3(\Omega)} \\
                & \le C h \| \bm{f} \|_{L^2(\Omega)} \| A \|_{L^\infty(\Omega)} \| \bm{v} \|_{H^3(\Omega)} \\
                & \to 0, \quad h \to 0.
            \end{align*}
            \item Since $ \mathcal{J}^{av}_{h0}(\nabla \times \bm{u}_h) \in H^1_0(\Omega) $,
            \begin{align*}
                I_3 & = \sum_{F\in\mathcal{F}_h} \langle  \bm{n}_{ F } \times [\![ \mathcal{J}^{av}_{h0}(\nabla \times \bm{u}_h) - \nabla \times \bm{u}_h  ]\!], \{ \! \{ A (\nabla \times)^2 \bm{v} \} \! \} \rangle_{ F } \\
                & = - \sum_{F\in\mathcal{F}_h} \langle  \bm{n}_{ F } \times [\![ \nabla \times \bm{u}_h ]\!], \{ \! \{ A (\nabla \times)^2 \bm{v} \} \! \} \rangle_{ F } \\
                & = - I_6
            \end{align*}
            \item We rearrange $ I_4 $ as
            \begin{align*}
                I_4 = & \langle  \bm{n}_{ \Sigma } \times \{\!\{ \mathcal{J}^{av}_{h0}(\nabla \times \bm{u}_h) - \nabla \times \bm{u}_h \}\!\}, [\![ A (\nabla \times)^2 \bm{v} ]\!] \rangle_{ \Sigma } \\
                & = \sum_{\Sigma \cap \overline{K} \ne \emptyset} \langle  \bm{n}_{ \Sigma } \times [\![ A (\nabla \times)^2 \bm{v} ]\!], \{\!\{ \mathcal{J}^{av}_{h0}(\nabla \times \bm{u}_h) - \nabla \times \bm{u}_h \}\!\} \rangle_{ \Sigma \cap \overline{K} } \\
                & \le ( \sum_{\Sigma \cap \overline{K} \ne \emptyset} h_K \|  \bm{n}_{ \Sigma} \times [\![ A (\nabla \times)^2 \bm{v} ]\!] \|_{ L^2(\Sigma \cap \overline{K}) }^2 )^\frac{1}{2} \\
                & \quad \cdot ( \sum_{\Sigma \cap \overline{K} \ne \emptyset} h_K^{-1} \| \{\!\{ \mathcal{J}^{av}_{h0}(\nabla \times \bm{u}_h) - \nabla \times \bm{u}_h \}\!\} \|_{ L^2(\Sigma \cap \overline{K}) }^2 )^\frac{1}{2}.
            \end{align*}
            where
            \begin{align*}
                \sum_{\Sigma \cap \overline{K} \ne \emptyset} h_K \|  \bm{n}_{ \Sigma \cap \overline{K} } \times [\![ A (\nabla \times)^2 \bm{v} ]\!] \|_{ L^2(\Sigma \cap \overline{K}) }^2 & \le C h \sum_{\Sigma \cap \overline{K} \ne \emptyset} \| [\![ A (\nabla \times)^2 \bm{v} ]\!] \|_{ L^2(\Sigma \cap \overline{K}) }^2 \\
                & = C h \| [\![ A (\nabla \times)^2 \bm{v} ]\!] \|_{ L^2( \Sigma ) }^2 \\
                & \to 0, \quad h \to 0,
            \end{align*}
            which converges to $0$, while by the revised discrete trace inequality in lemma \ref{trace ineq}, \ref{average operator}, and \ref{energy estimate}
            \begin{align*}
                & \sum_{\Sigma \cap \overline{K} \ne \emptyset} h_K^{-1} \| \{\!\{ \mathcal{J}^{av}_{h0}(\nabla \times \bm{u}_h) - \nabla \times \bm{u}_h \}\!\} \|_{ L^2(\Sigma \cap \overline{K}) }^2 \\
                & \le C \sum_{K\in\mathcal{T}_h} h_K^{-2} \| \mathcal{J}^{av}_{h0}(\nabla \times \bm{u}_h) - \nabla \times \bm{u}_h \|_{ L^2(K) }^2 \\
                & \le C \sum_{ F \in \mathcal{F}_h } h_F^{-1} \|  \bm{n}_F \times [\![ \nabla \times \bm{u}_h ]\!] \|_{L^2(F)}^2 \\
                & \le C \| \bm{f} \|_{L^2(\Omega)},
            \end{align*}
            which is bounded.
            \item By line 5 of \ref{step 1.1} that $ p_h \overset{H^1_0(\Omega)}{\rightharpoonup} p_0 $, we have
            $$ I_5 = ( \nabla (p_0 - p_h), \bm{v} )_\Omega  \to 0, \quad h \to 0. $$
        \end{itemize}
        So far we have shown that
        \begin{align*}
                I_1 \to 0, \quad h \to 0.
        \end{align*}
        \item We combine \ref{energy estimate} and \ref{est on test v} to estimate $ I \! I $:
        \begin{align*}
            I \! I & = \sum_{ K \in \mathcal{T}_h } (A (\nabla \times)^2 \bm{u}_h, (\nabla \times)^2 (\bm{v} - \bm{v}_h ) )_{ K } + ( \nabla p_h, \bm{v} - \bm{v}_h )_{ K } \\
            & \le (\sum_{ K \in \mathcal{T}_h }\| A (\nabla \times)^2 \bm{u}_h \|_{ L^2(K) }^2)^\frac{1}{2} (\sum_{ K \in \mathcal{T}_h }\| (\nabla \times)^2 (\bm{v} - \bm{v}_h ) \|_{L^2(K)}^2)^\frac{1}{2} \\
            & \quad + \| \nabla p_h \|_{L^2(\Omega)} (\sum_{ K \in \mathcal{T}_h }\| \bm{v} - \bm{v}_h \|_{L^2(F)}^2)^\frac{1}{2} \\
            & \le C \| A \|_{L^\infty(\Omega)} \| \bm{f} \|_{L^2(\Omega)} \sum _{K\in\mathcal{T}_h} h_K \| \nabla \times \bm{v} \|_{H^2(\Omega)} \\ 
            & \quad + C \| \bm{f} \|_{L^2(\Omega)} \sum _{K\in\mathcal{T}_h} h_K^2 ( \| \bm{v} \|_{H^2(K)} + h_K \| \nabla \times \bm{v} \|_{H^2(K)}) \\
            & \to 0, \quad h \to 0.
        \end{align*}
        \item Since $ \{(\bm{u}_h,p_h)\} $ is the numerical solution to \ref{ip mtd}, we can rearrange $ I_3 $ as 
        \begin{align*}
            I \! I \! I & = \sum_{ K \in \mathcal{T}_h } (A (\nabla \times)^2 \bm{u}_h, (\nabla \times)^2 \bm{v}_h ) )_{ K } + ( \nabla p_h, \bm{v}_h )_{ K } -  \sum_{F\in\mathcal{F}_h} \langle \{\!\{ A(\nabla\times)^2 \bm{v} \}\!\},  \bm{n}_F \times [\![ \nabla\times \bm{u}_h ]\!] \rangle_{F} \\
            & = \sum_{F\in\mathcal{F}_h} \langle \{\!\{ A ( \nabla \times )^2 \bm{u}_h \}\!\},  \bm{n}_F \times [\![ \nabla\times  \bm{v}_h ]\!] \rangle_{F} \\
            & \quad + \sum_{F \in \mathcal{F}_h }\langle \{\!\{ A ( \nabla \times )^2 (\bm{v}_h -\bm{v}) \}\!\},  \bm{n}_F \times [\![ \nabla \times \bm{u}_h ]\!] \rangle_{F} \\
            & \quad - \sum_{F \in \mathcal{F}_h} \frac{\tau}{h_F} \langle  \bm{n}_F\times[\![ \nabla\times \bm{u}_h ]\!],  \bm{n}_F \times [\![ \nabla\times \bm{v}_h ]\!] \rangle_{F} \\
            & \quad + (\bm{f}, \bm{v}_h )_\Omega \\
            & =: I \! I \! I_1 + I \! I \! I_2 + I \! I \! I_3 + I \! I \! I_4
        \end{align*}
        \begin{itemize}
            \item Notice that $  [\![ \nabla\times \bm{v}_h ]\!] =  [\![ \nabla\times (\bm{v}_h - \bm{v}) ]\!] $ since $ \bm{v} \in [C^\infty_0(\Omega)]^3 $. Thus, applying \ref{est on test v} and the discrete inverse inequality,
            \begin{align*}
                I \! I \! I_1 & = \sum_{F\in\mathcal{F}_h} \langle \{\!\{ A ( \nabla \times )^2 \bm{u}_h \}\!\},  \bm{n}_F \times [\![ \nabla\times  (\bm{v}_h - \bm{v}) ]\!] \rangle_{F} \\
                & \le \sum_{F\in\mathcal{F}_h} h_F^\frac{1}{2} \| \{\!\{ A ( \nabla \times )^2 \bm{u}_h \}\!\} \|_{L^2(F)}  h_F^{-\frac{1}{2}}\|  \bm{n}_F \times [\![ \nabla\times (\bm{v}_h - \bm{v}) ]\!] \|_{L^2(F)} \\
                & \le C \sum_{K\in\mathcal{T}_h} \| A ( \nabla \times )^2 \bm{u}_h \|_{L^2(K)} h_K^{1} \| \nabla \times \bm{v} \|_{H^{2}(K)} \\
                & \to 0, \quad h \to 0.
            \end{align*} 
            \item Apply \ref{est on test v} and \ref{energy estimate},
            \begin{align*}
                I \! I \! I_2 & = \sum_{F \in \mathcal{F}_h }\langle \{\!\{ A ( \nabla \times )^2 (\bm{v}_h -\bm{v}) \}\!\},  \bm{n}_F \times [\![ \nabla \times \bm{u}_h ]\!] \rangle_{F} \\
                & \le \sum_{F\in\mathcal{F}_h} h_F^\frac{1}{2} \| \{\!\{ A ( \nabla \times )^2 (\bm{v}_h -\bm{v}) \}\!\} \|_{L^2(F)}  h_F^{-\frac{1}{2}}\|  \bm{n}_F \times [\![ \nabla\times \bm{u}_h ]\!] \|_{L^2(F)} \\
                & \le C \sum_{K\in\mathcal{T}_h} ( h_K^{1} \| \nabla \times \bm{v} \|_{H^{2}(K)} + h_K^{1} \| (\nabla \times)^2 \bm{v} \|_{H^{1}(K)}  ) \| \bm{f} \|_{L{2}(\Omega)} \\
                & \to 0, \quad h \to 0.
            \end{align*}
            \item Apply \ref{est on test v} and \ref{energy estimate} again,
            \begin{align*}
                I \! I \! I_3 & = \sum_{F \in \mathcal{F}_h} \frac{\tau}{h_F} \langle  \bm{n}_F\times[\![ \nabla\times \bm{u}_h ]\!],  \bm{n}_F \times [\![ \nabla\times \bm{v}_h ]\!] \rangle_{F} \\
                & \le C\sum_{F\in\mathcal{F}_h} h_F^{-\frac{1}{2}}\|  \bm{n}_F \times [\![ \nabla\times \bm{u}_h ]\!] \|_{L^2(F)} h_F^{-\frac{1}{2}}\|  \bm{n}_F \times [\![ \nabla\times (\bm{v}_h - \bm{v}) ]\!] \|_{L^2(F)} \\
                & \le C \| \bm{f} \|_{L{2}(\Omega)} \sum_{K\in\mathcal{T}_h} ( h_K^{1} \| \nabla \times \bm{v} \|_{H^{2}(K)} )  \\
                & \to 0, \quad h \to 0.
            \end{align*}
            \item By \ref{est on test v}, $ I \! I \! I_4 = (\bm{f}, \bm{v}_h )_\Omega \to (\bm{f}, \bm{v})_\Omega $ as $ h \to 0 $. 
            \end{itemize}
        In conclusion, $ I \! I \! I \to (\bm{f}, \bm{v})_\Omega $ as $ h \to 0 $.
        \end{enumerate}
        Now taking the limit as above, we have deduced that
        $$ (A (\nabla \times)^2 \bm{u}_0, (\nabla \times)^2 \bm{v} )_\Omega + ( \nabla p_0, \bm{v} )_\Omega = (\bm{f}, \bm{v})_\Omega. $$
        Moreover, \ref{est on test v} has included that $ \nabla \cdot \bm{u}_0 = 0 $. Therefore, $ ( \bm{u}_0, p_0 ) $ is the weak solution.
\end{proof}
\begin{remark}
    \begin{enumerate}
        \item Notice that the convergence is derived from the compact embedding, which can never deliver the convergence rate. This is an essential limit of this strategy.
        \item Our conclusion involving the convergence under the $ H (\text{curl};\Omega) $ norm naturally raises the question of convergence in $ H (\text{curl}^2;\Omega) $. This is unavailable though, because strong convergence of the second order derivative is usually derived from the weakly convergence of the third order derivative, while there is no such conclusion that $ \nabla \times A (\nabla \times)^2 \bm{u} $ admits any regularity. In fact, this strategy derives nothing more than the boundedness of $ H (\text{curl}^2;\Omega) $ error, which is a direct consequence of the energy estimate.
        \item While the present work assumes that $ A $ is piecewise constant into polyhedral subdomains, the conclusions above should also govern the characteristic of the generalized scenario:
        \begin{itemize}
            \item $ \{\Omega_i\}_{i\in\Lambda} $ is a partition of $\Omega$ such that each boundary $ \partial \Omega_i $ is Lipschitz except for a set of zero two dimensional Hausdorff measure ($\mathcal{H}^{2}$).
            \item $ A $ is a matrix-valued function that is $C^1$ on each of $\Omega_i$, whose evaluation is uniformly positive definite.
        \end{itemize}
        The concrete proof requires more effort though, and is a future goal of us.
    \end{enumerate}
\end{remark}

\section{The Optimal Convergence Rate}
\subsection{Additional Assumptions}
The convergence rate conclusion, different from the former theoretical convergence, requires more on the mesh and the regularity assumption. Throughout this section, we assume that $ \mathcal{T}_h $ are subordinate to $\{\Omega_i\}_{i\in\Lambda}$, and the following assumptions hold:
\begin{assumption}\label{assumption}
    On each region of the partition $\{\Omega_i\}_{i\in\Lambda}$, $\exists s^i_0$, $s^i_1$, $s^i_2$, $q^i_3$, and $s^i_p$, such that
    \begin{equation*}
        \begin{split}
            \bm{u} \in H^{s^i_0}(\Omega)^3,\\
            \nabla\times \bm{u} \in H^{s^i_1}(\Omega)^3,\\
            A(\nabla\times\nabla\times \bm{u} )\in H^{s^i_2}(\Omega)^3,\\
            \nabla\times A(\nabla\times\nabla\times \bm{u} )\in L^{q^i_3}(\Omega)^3,\\
            p\in H^{s^i_p}(\Omega).
        \end{split}
    \end{equation*}
    Here we allow distinct evaluations of $s^i_0$, $s^i_1$, $s^i_2$, $q^i_3$, and $s^i_p$ on different regions $\Omega_i$, or even on each distinct open subset of $ \Omega $. However, we assume uniform bounds of them respectively: $s^i_0>\frac{1}{2},\;s^i_1\ge1,\;s^i_2\in (0,\frac{1}{2})$, $q^i_3\in (\frac{6}{3+2s_2},2)$, and $s^i_p\ge1$.
\end{assumption}
Whenever the context is clear, we omit the superscripts and write $s_0$, $s_1$,$s_2$, $q_3$, and $s_p$. 
\begin{remark}
    The assumption resembles the one in \cite{dong2024c}. Notice that the regularity of $ \nabla\times A(\nabla\times\nabla\times \bm{u}) $ never reaches $L^2$, so that the assumption is weaker than $ A(\nabla\times)^2 \bm{u} \in H^{\frac{1}{2}+\delta}(\Omega) $, preserving the novelty. However, there is still a gap between the current assumption and the former unconditional convergence result. What if $ \nabla\times A(\nabla\times\nabla\times \bm{u}) $ only exists in the distributional sense, instead of having an integrable function representation? Analysis on this case encounters essential difficulties, and we have not overcome them yet. It is our future goal to drop the restriction on third order derivative.
\end{remark}
Now we employ the argument established in \cite{ern2022quasi}, specifying an enlarged space endowed with a proper space to do analysis:
\begin{definition}\label{aug norm}
    We define
    \begin{equation*}
        \begin{split}
            V_s=\{ \bm{v} \in H(\text{curl}^2;\Omega) \cap H(\text{div }0;\Omega) | A(\nabla\times)^2 \bm{v} \in H^{s_2}(\Omega),\\
        \nabla\times A(\nabla\times)^2 \bm{v} \in L^{q_3}(\Omega),\\
        (\nabla\times)^2 A(\nabla\times)^2 \bm{v} \in L^2(\Omega)\},
        \end{split}
    \end{equation*}
    denote $ E_\# = V_s + E_h $, and endow it with the \textit{augmented norm}
    \begin{equation*}
        \begin{aligned}
            \| \bm{v}_h \|_{E_\#}:=&(\| \bm{v}_h \|_{E_h}^2 + \sum_{K\in \mathcal{T}_h} h_K^{2s_2} | A(\nabla\times)^2 \bm{v}_h |_{H^{s_2}(K)}^2 + h_K^{5-\frac{6}{q_3}} \|\nabla\times A(\nabla\times)^2 \bm{v}_h \|_{L^{q_3}(K)}^2\\
            & + h_K^{4} \|(\nabla\times)^2 A(\nabla\times)^2 \bm{v}_h \|_{L^2(K)}^2)^{1/2}.
        \end{aligned}
    \end{equation*}
\end{definition}
Note that assumption \ref{assumption} implies that the weak solution $ \bm{u} $ lands in $ V_s $. As we described in the introduction, the augmented norm $\|\cdot\|_{E_\#}$ assembles quantities computed on each cell and interface. The purpose of this section is to establish the error estimate between the numerical solutions, $ \bm{u}_h $, and the weak solution, $u$, under this augmented norm $\|\cdot\|_{E_\#}$. \par

\subsection{An Abstract Framework}
To reveal the quasi-optimal framework, it will be beneficial to demonstrate it in the following abstract form. In this subsection, the notation is all \textit{ad hoc}, only standing for abstractly imposed spaces and norms, but does not need to be the same as the settings before. We consider the following saddle point problem: Let $E$, $Q$ be Banach spaces. Find $(u,p)\in E\times Q$ such that
\begin{equation}
    \begin{aligned}
        a(u,v)+b(p,v)=(f,v)\quad&\forall v\in E;\\
        b(q,u)=0,\quad&\forall q\in Q.
    \end{aligned}
\end{equation}
where $f\in E^*$. Denote $V=\{v\in E|b(q,v)=0,\forall q\in Q\}$, and on assuming the $V$-coercive of $a$ and the inf-sup condition of $b$, then the Babu\v{s}ka–Brezzi Theorem implies well-posedness of this problem. Assume that the weak solution $u\in V_s\subset V$.\par
We impose the following discretizations: Find $( u_h ,p_h) \in E_h \times Q_h$ such that
\begin{equation}
    \begin{aligned}
        a_h( u_h , v_h )+b_h(p_h, v_h )=(f_h, \bm{v}_h ) \quad & \forall v_h \in E_h;\\
        b_h(q_h, u_h )=0,\quad&\forall q_h\in Q_h.
    \end{aligned}
\end{equation}
Define $E_{\#}=V_s+E_h$, $ V_h =\{ v_h \in E_h| b_h(q_h, v_h )=0, \forall q_h\in Q_h\} $, and $\delta_h: V_h \to V_h ^*$ as $\langle\delta_h( v_h ), w_h \rangle=a_h( u_h - v_h , w_h )$. Assume that $Q_h\subset Q$ and $b_h=b$ on $Q_h\times E_\#$.
\begin{theorem}\label{abs quasi}
    If the following estimates hold:
    \begin{enumerate}[(a)]
        \item $\|a_h\|_{E_h\times E_h}:=C_A<\infty$; $\inf_{ v_h \in V_h }\frac{a_h( v_h , v_h )}{\| v_h \|_{E_h}^2}:=\alpha>0$; 
        \item $\|b\|_{Q_h\times E_h}:=C_B<\infty$; $\inf_{q_h\in Q_h}\sup_{ w_h \in E_h}\frac{b_h(q_h, w_h )}{\|q_h\|_{Q}\| w_h \|_{E_h}}:=\beta>0$; 
        \item $\sup_{ v_h \in V_h }\frac{\| v_h \|_{E_\#}}{\| v_h \|_{E_h}}:=c_\#<\infty$; 
        \item $\sup_{ v_h \in V_h }\frac{\|\delta_h( v_h )\|_{ V_h ^*}}{\|u- v_h \|_{E_\#}+\|p-p_h\|_Q}:=\omega_\#<\infty$;
    \end{enumerate}
    then the numerical solution $( u_h ,p_h)$ satisfies
    \begin{equation*}
        \begin{aligned}
            \|u- u_h \|_{E_\#}&\le(1+\frac{c_\#\omega_\#}{\alpha})(\inf_{ v_h \in V_h }\|u- v_h \|_{E_\#}+\|p-p_h\|_Q);\\
            \|p-p_h\|_Q&\le(1+\frac{C_B}{\beta})\inf_{q_h\in Q_h}\|p-q_h\|_Q.
        \end{aligned}
    \end{equation*}
\end{theorem}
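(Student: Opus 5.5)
The plan is to follow the standard quasi-optimality argument of \cite{ern2022quasi}, adapted to the saddle point structure. The pressure estimate comes first, because the velocity estimate carries the term $\|p-p_h\|_Q$.

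\textbf{Pressure.} Fix an arbitrary $q_h\in Q_h$. Since $Q_h\subset Q$ and $b_h=b$ on $Q_h\times E_\#$, the inf-sup condition (b) gives
\[
\beta\|q_h-p_h\|_Q\le \sup_{w_h\in E_h}\frac{b(q_h-p_h,w_h)}{\|w_h\|_{E_h}}.
\]
We split
\[
b(q_h-p_h,w_h)=b(q_h-p,w_h)+b(p-p_h,w_h).
\]
The first term is bounded by $C_B\|p-q_h\|_Q\|w_h\|_{E_h}$; here we read the bound $\|b\|\le C_B$ as valid on $Q\times E_h$ (or at least on differences $p-q_h$).

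For the second term we take the difference of the continuous and discrete momentum equations tested with $w_h$. This leaves $b(p-p_h,w_h)$ expressed through consistency quantities of $a_h$. The cleanest route, standard in this framework, is to treat the pressure problem as decoupled: under the assumption that $b_h=b$ and the data match, we aim for $b(p-p_h,w_h)=0$ on the relevant test functions. This holds for instance if $(p_h)$ is a projection of $p$, which is the structure the claimed estimate suggests.

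The triangle inequality $\|p-p_h\|_Q\le\|p-q_h\|_Q+\|q_h-p_h\|_Q$ then gives the factor $1+C_B/\beta$. This step is the main obstacle: justifying that the $b$-residual vanishes, or is controlled by $\|p-q_h\|$ alone, requires that the consistency error in $a_h$ does not pollute the pressure. If it fails in general, I would add this orthogonality as an implicit hypothesis, e.g.\ via the divergence-free structure $V_h$.

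\textbf{Velocity.} Fix $v_h\in V_h$ and set $e_h:=u_h-v_h\in V_h$. By coercivity (a) and the definition of $\delta_h$,
\[
\alpha\|e_h\|_{E_h}^2\le a_h(e_h,e_h)=\langle\delta_h(v_h),e_h\rangle\le\|\delta_h(v_h)\|_{V_h^*}\|e_h\|_{E_h}.
\]
Hence $\|e_h\|_{E_h}\le \alpha^{-1}\|\delta_h(v_h)\|_{V_h^*}$, and by (d) this is at most $\alpha^{-1}\omega_\#(\|u-v_h\|_{E_\#}+\|p-p_h\|_Q)$.

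By (c), $\|e_h\|_{E_\#}\le c_\#\|e_h\|_{E_h}$. The triangle inequality
\[
\|u-u_h\|_{E_\#}\le\|u-v_h\|_{E_\#}+\|e_h\|_{E_\#}
\]
then yields
\[
\|u-u_h\|_{E_\#}\le\Big(1+\frac{c_\#\omega_\#}{\alpha}\Big)\big(\|u-v_h\|_{E_\#}+\|p-p_h\|_Q\big).
\]
Taking the infimum over $v_h\in V_h$ completes the velocity estimate.

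Note that the velocity part needs no pressure test functions, since $e_h\in V_h$ kills $b_h(p_h,e_h)$. The pressure only enters through the definition of $\omega_\#$, so all the genuine work is deferred to verifying (c) and (d) in the concrete setting.
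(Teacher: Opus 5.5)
Your velocity argument is the paper's: the triangle inequality in $\|\cdot\|_{E_\#}$, then (c) applied to $u_h-v_h\in V_h$, then coercivity (a), then (d). That half is fine.

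The pressure half has a genuine gap, which you located but did not close. The orthogonality you propose to assume, $b(p-p_h,w_h)=0$ for all $w_h\in E_h$, is not implied by (a)--(d). It also fails in general for the concrete IP method. Subtracting the two momentum equations there gives $(\nabla(p-p_h),\bm{w}_h)_\Omega=a_h(\bm{u}_h,\bm{w}_h)-((\nabla\times)^2A(\nabla\times)^2\bm{u},\bm{w}_h)_\Omega$. This is a consistency residual and does not vanish for general $\bm{w}_h\in E_h$.

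The paper's own proof makes the same unjustified step. It passes from $\beta^{-1}\sup_{w_h}|b_h(p_h-q_h,w_h)|/\|w_h\|_{E_h}$ directly to $\beta^{-1}C_B\|p-q_h\|_Q$. That tacitly uses $b_h(p_h,w_h)=b(p,w_h)$ for every $w_h\in E_h$, and boundedness of $b$ on $Q\times E_h$ rather than only on $Q_h\times E_h$ (the latter point you noticed yourself).

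In fact the abstract pressure bound cannot be derived from (a)--(d) at all. Take $E=E_h=\mathbb{R}^2$ and $Q=Q_h=\mathbb{R}$, with Euclidean norms everywhere, including for $\|\cdot\|_{E_\#}$. Set $b(q,v)=qv_1$, $a(u,v)=u\cdot v$, $a_h(u,v)=u\cdot v+\epsilon(u_1v_2+u_2v_1)$ and $f_h=f$. Then $V=V_h=\{v_1=0\}$, and (a)--(d) hold with $\alpha=C_B=\beta=c_\#=1$ and $\omega_\#\le 1$. Both velocities equal $u=u_h=(0,f_2)$. However, $p=f_1$ while $p_h=f_1-\epsilon f_2$, so $\|p-p_h\|_Q=|\epsilon f_2|>0=\inf_{q_h\in Q_h}\|p-q_h\|_Q$.

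The missing ingredient is orthogonality along a subspace of $E_h$ on which $a_h(v_h,\cdot)$ vanishes identically and over which $b$ still satisfies an inf-sup condition. In the concrete setting this subspace is $\nabla Q_h$. It is the $L^2$-orthogonal complement of $V_h$ in $E_h$, so appealing to the divergence-free structure of $V_h$ itself is not the right mechanism. Three facts make it work:
\begin{itemize}
\item $\nabla Q_h\subset E_h$, because $Q_h$ is $H^1_0$-conforming of degree $k+1$.
\item $a_h(\cdot,\nabla q_h)=0$, because $\nabla\times\nabla q_h=0$ cellwise.
\item $\nabla q_h\in H_0(\text{curl}^2;\Omega)$ is an admissible test function in the continuous problem, and $a$ vanishes on it too.
\end{itemize}
Testing both problems with $\nabla q_h$ gives $(\nabla(p-p_h),\nabla q_h)_\Omega=0$ for all $q_h\in Q_h$. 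So $p_h$ is the $H^1_0$-Galerkin projection of $p$, and C\'ea's lemma gives the pressure estimate, even with constant $1$ in the norm $\|\nabla\cdot\|_{L^2(\Omega)}$.

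You therefore have two options. Either add such a hypothesis to the abstract theorem, for example a subspace $W_h\subset E_h$ with $a_h(\cdot,w_h)=0$ and $b(p-p_h,w_h)=0$ for $w_h\in W_h$, plus an inf-sup condition for $b$ on $Q_h\times W_h$. Or prove the pressure bound directly for the concrete IP scheme.
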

\begin{proof}
    The existence of the numerical solution follows from the Babu\v{s}ka–Brezzi Theorem. For an arbitrary $ v_h \in V_h $,
    \begin{align*}
         \|u- u_h \|_{E_\#}&\le\|u- v_h \|_{E_\#}+\| v_h - u_h \|_{E_\#}\\
         &\le\|u- v_h \|_{E_\#}+c_\#\| v_h - u_h \|_{E_h}\\
         &\le\|u- v_h \|_{E_\#}+\frac{c_\#}{\alpha}\sup_{ w_h \in V_h }\frac{|a_h( v_h - u_h , w_h )|}{\| w_h \|_{E_h}}\\
         &=\|u- v_h \|_{E_\#}+\frac{c_\#}{\alpha}\|\delta_h( v_h )\|_{ V_h ^*}\\
         &\le(1+\frac{c_\#\omega_\#}{\alpha})(\|u- v_h \|_{E_\#}+\|p-p_h\|_Q).
    \end{align*}
    For any $q_h\in Q_h$,
    \begin{align*}
         \|p-p_h\|_Q&\le\|p-q_h\|_Q+\|p_h-q_h\|_Q\\
         &\le\|p-q_h\|_{E_\#}+\frac{1}{\beta}\sup_{ w_h \in E_h}\frac{|b_h(p_h-q_h, w_h )|}{\| w_h \|_{E_h}}\\
         &\le(1+\frac{C_B}{\beta})\|p-q_h\|_Q.
    \end{align*}
\end{proof}

\subsection{The Quasi-Optimal Argument}
Now we prove that the quasi-optimal result claimed above holds for \ref{ip mtd}. It suffices to check the four assumptions in Theorem \ref{abs quasi}.\par
First, we notice that the following estimates have been covered in the proof of \ref{well-posed}:
    \begin{enumerate}[(a)]
        \item $ \exists C_A < \infty $, $ \alpha > 0 $, $\forall \bm{v}_h , \bm{w}_h \in V_h $, 
        $$|a_h( \bm{v}_h , \bm{w}_h )|\le C_A\| \bm{v}_h \|_{E_h}\| \bm{w}_h \|_{E_h}; \quad |a_h( \bm{v}_h , \bm{v}_h )|\ge\alpha\| \bm{v}_h \|_{E_h}^2. $$
        \item $ \exists C_B < \infty $, $ \beta > 0 $, $ \forall q_h \in Q_h$, 
        $$|b_h(q_h, \bm{w}_h )|\le C_B\|q_h\|_{H^1_0(\Omega)}\| \bm{w}_h \|_{E_h}; \quad \sup_{ \bm{w}_h \in E_h}\frac{b_h(q_h, \bm{w}_h )}{\| \bm{w}_h \|_{E_h}}>\beta \|q_h\|_{H^1_0(\Omega)}$$
    \end{enumerate}
Then, since the mesh $\mathcal{T}_h$ and the finite element space $E_h$ we are applying in this work surely satisfy the assumptions of \ref{inv ineq}, we have
\begin{theorem}
    $\exists c_\#>0$, $\sup_{ \bm{v}_h \in V_h }\| \bm{v}_h \|_{E_\#}\le c_\#\| \bm{v}_h \|_{E_h}$.
\end{theorem}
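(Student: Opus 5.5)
The plan is to compare, cell by cell, the three extra terms in $\|\bm{v}_h\|_{E_\#}$ (Definition \ref{aug norm}) with the part of $\|\bm{v}_h\|_{E_h}$ that is already there, namely $\sum_K \|(\nabla\times)^2\bm{v}_h\|_{L^2(K)}^2$. Throughout this section $\mathcal{T}_h$ is subordinate to $\{\Omega_i\}$, so on each $K$ the matrix $A$ is a fixed constant symmetric matrix $A_K$ with $|A_K|\le \|A\|_{L^\infty(\Omega)}$. Hence, for $\bm{v}_h\in V_h\subset[\mathcal{P}_k(\mathcal{T}_h)]^3$, the restrictions $\bm{w}_K := A_K(\nabla\times)^2\bm{v}_h|_K$ lie in $[\mathcal{P}_{k-2}(K)]^3$. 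The cell $K$ with its polynomial space is affine generated, so Lemma \ref{inv ineq} applies componentwise with constants independent of $h$ and $K$.

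The key steps, one per extra term, are as follows.
\begin{enumerate}
\item \emph{Fractional seminorm term.} Apply Lemma \ref{inv ineq} with $s=s_2\in(0,\tfrac12)$, $m=0$ and $p=q=2$:
\[
|\bm{w}_K|_{H^{s_2}(K)} \le C h_K^{-s_2}\|\bm{w}_K\|_{L^2(K)} \le C h_K^{-s_2}\|A\|_{L^\infty}\|(\nabla\times)^2\bm{v}_h\|_{L^2(K)} .
\]
Multiplying by $h_K^{s_2}$ gives the bound $C\|(\nabla\times)^2\bm{v}_h\|_{L^2(K)}$.
\item \emph{$L^{q_3}$ term.} Use the integer case ($s=1$, $m=0$) with $p=q_3$ and $q=2$ in $d=3$:
\[
\|\nabla\times\bm{w}_K\|_{L^{q_3}(K)} \le C h_K^{-1-3(\frac12-\frac1{q_3})}\|\bm{w}_K\|_{L^2(K)} .
\]
Since $h_K^{5/2-3/q_3}$ times $h_K^{-1-3/2+3/q_3}$ equals $h_K^{0}$, the weighted term $h_K^{5-6/q_3}\|\nabla\times\bm{w}_K\|^2_{L^{q_3}(K)}$ is bounded by $C\|\bm{w}_K\|^2_{L^2(K)}$. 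One subtlety is that Lemma \ref{inv ineq} is stated for $p,q\in(1,\infty)$ with a norm on the left rather than a seminorm. For polynomials, though, the estimate $\|D\bm{w}\|_{L^{q_3}}\lesssim h^{-1}h^{3(1/q_3-1/2)}\|\bm{w}\|_{L^2}$ is standard: scale to $\hat K$, use equivalence of norms on $\mathcal{P}_{k-2}(\hat K)$, and scale back. I would state it in that form.
\item \emph{Second-order term.} Use $s=2$, $m=0$, $p=q=2$:
\[
h_K^2\|(\nabla\times)^2\bm{w}_K\|_{L^2(K)} \le C\|\bm{w}_K\|_{L^2(K)} .
\]
When $k\le 3$ the left side is simply zero, which is harmless.
\end{enumerate}
Summing the three bounds over $K$ gives
\[
\|\bm{v}_h\|_{E_\#}^2 \le \bigl(1+C\|A\|_{L^\infty}^2\bigr)\,\|\bm{v}_h\|_{E_h}^2 ,
\]
because $\sum_K\|(\nabla\times)^2\bm{v}_h\|_{L^2(K)}^2\le\|\bm{v}_h\|_{E_h}^2$ by definition. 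This yields $c_\#$.

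The only real care point is bookkeeping rather than a genuine obstacle. It consists of three checks:
\begin{itemize}
\item matching the powers of $h_K$ in Definition \ref{aug norm} to the exponents $m-s-d(\frac1q-\frac1p)$ of Lemma \ref{inv ineq}, in particular the mixed $L^{q_3}$ versus $L^2$ exponent;
\item making sure the constant does not depend on $q_3$ or $s_2$, which holds because these range over the compact sets fixed by the uniform bounds in Assumption \ref{assumption};
\item using subordination of the mesh so that $A$ commutes with differentiation inside each cell.
\end{itemize}
If the mesh were not subordinate, $A(\nabla\times)^2\bm{v}_h$ would not be a polynomial on $K$, and the $H^{s_2}$ seminorm could even blow up across an interface inside a cell. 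This is exactly why the assumption is imposed in this section.
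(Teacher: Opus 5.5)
Your proposal is correct and is essentially the paper's argument. The paper's proof reduces the claim to cellwise bounds of $h_K^{s_2}|A(\nabla\times)^2\bm{v}_h|_{H^{s_2}(K)}$, $h_K^{5/2-3/q_3}\|\nabla\times A(\nabla\times)^2\bm{v}_h\|_{L^{q_3}(K)}$ and $h_K^{2}\|(\nabla\times)^2A(\nabla\times)^2\bm{v}_h\|_{L^2(K)}$ by $\|A(\nabla\times)^2\bm{v}_h\|_{L^2(K)}$, and then cites Lemma \ref{inv ineq}. You are more explicit than the paper about why this works: mesh subordination makes $A(\nabla\times)^2\bm{v}_h|_K$ a polynomial, and you check that the $h_K$-exponents cancel. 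One small inaccuracy: the ranges of $s_2$ and $q_3$ are open intervals, not compact sets. This is harmless, since the inverse-estimate constants stay bounded on the closures of those intervals.
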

\begin{proof}
    It is sufficient to find $ c_\# $ such that
    \[\sum_{K\in\mathcal{T}_h}h_K^{s_2}|A(\nabla\times\nabla\times  \bm{v}_h )|_{H^{s_2}(K)}\le c_\#\sum_{K\in\mathcal{T}_h}\|A(\nabla\times\nabla\times  \bm{v}_h )\|_{L^2(K)}, \]
    \[\sum_{K\in\mathcal{T}_h}h_K^{\frac{5}{2}-\frac{3}{q_3}}\|\nabla\times A(\nabla\times\nabla\times  \bm{v}_h )\|_{L^{q_3}(K)}\le c_\#\sum_{K\in\mathcal{T}_h}\|A(\nabla\times\nabla\times  \bm{v}_h )\|_{L^2(K)}, \]
    and 
    \[\sum_{K\in\mathcal{T}_h}h_K^{2}\|\nabla\times\nabla\times A(\nabla\times\nabla\times  \bm{v}_h )\|_{L^2(K)}\le c_\#\sum_{K\in\mathcal{T}_h}\|A(\nabla\times\nabla\times  \bm{v}_h )\|_{L^2(K)} \]
    hold for any $ \bm{v}_h  \in V_h $, which are all direct consequences of \ref{inv ineq}.
\end{proof}
The rest of this section aims evaluating $\omega_\#$, which requires the most efforts. We apply the face-to-cell lifting technique established as in \cite[Lemma 3.1]{ern2022quasi}.
\begin{lemma}
    $\forall \frac{3}{2}<q<2$, $\exists L_{F}^{K}$ to be a bounded operator from $W^{1-\frac{1}{q},q}(F)$ to $W^{1,q}(K)$ where $K\in\mathcal{T}_h$, $F\in\partial K$, such that
    \begin{enumerate}
        \item the trace map $\gamma:W^{1,q}(K)\to W^{1-\frac{1}{q},q}(\partial K)$ satisfies
        \begin{equation}\label{lift1}
            \gamma\circ L^K_F(\phi)=
            \begin{cases}
                \phi\quad\text{on }F;\\
                0\quad\text{on }\partial K\backslash F;\\
            \end{cases}
        \end{equation}
        \item $\forall\phi\in W^{1-\frac{1}{q},q}(F)$, the following estimate holds:
        \begin{equation}\label{lift2}
            h_K^{\frac{6}{q}-3}\|L^K_F(\phi)\|_{L^\frac{q}{q-1}(K)}+h_K\|\nabla L^K_F(\phi)\|_{L^q(K)}\le Ch_K^{\frac{3}{q}}h_F^{-\frac{2}{q}}(\|\phi\|_{L^q(F)}+h_F^{1-\frac{1}{q}}|\phi|_{W^{1-\frac{1}{q},q}(F)}).
        \end{equation}
    \end{enumerate}
\end{lemma}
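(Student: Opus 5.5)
We construct $L^K_F$ on a reference configuration and then transport it to $K$ by the affine map $T_K$. The scaling in \ref{lift2} is dictated by that change of variables.

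\textbf{Reference element.} Let $\hat K$ be the reference tetrahedron and $\hat F$ one of its faces. For $\frac{3}{2}<q<2$ the trace space of $W^{1,q}(\hat K)$ on $\partial\hat K$ is $W^{1-\frac1q,q}(\partial\hat K)$. Here $0<1-\frac1q<\frac1q$, i.e.\ $(1-\frac1q)q<1$. In this range extension by zero from $\hat F$ to $\partial\hat K$ is a bounded map $W^{1-\frac1q,q}(\hat F)\to W^{1-\frac1q,q}(\partial\hat K)$: the Hardy-type weight $\int_{\hat F}|\phi|^q\,\mathrm{dist}(x,\partial\hat F)^{-(1-\frac1q)q}$ is controlled by $\|\phi\|_{W^{1-\frac1q,q}(\hat F)}$ because $(1-\frac1q)q<1$. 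I expect this to be the main obstacle, and I would cite Grisvard's result on $\widetilde W^{s,p}=W^{s,p}$ for $sp<1$ instead of reproving it.

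Composing with a bounded right inverse of the trace (Gagliardo) gives $\hat L:W^{1-\frac1q,q}(\hat F)\to W^{1,q}(\hat K)$. It satisfies \ref{lift1} on $\hat K$ and
$$\|\hat L\hat\phi\|_{W^{1,q}(\hat K)}\le C\big(\|\hat\phi\|_{L^q(\hat F)}+|\hat\phi|_{W^{1-\frac1q,q}(\hat F)}\big).$$
Sobolev embedding in 3D gives $W^{1,q}(\hat K)\hookrightarrow L^{q^*}(\hat K)$ with $q^*=\frac{3q}{3-q}$. Since $q>\frac32$, we have $q^*\ge \frac{q}{q-1}$ (equivalently $3(q-1)\ge 3-q$, i.e.\ $q\ge\frac32$). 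Hence $\|\hat L\hat\phi\|_{L^{q/(q-1)}(\hat K)}$ is bounded by the same right-hand side.

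\textbf{Transport.} For each $F\subset\partial K$, pick $T_K$ mapping $\hat F$ onto $F$ (after relabelling vertices), and set $L^K_F\phi:=(\hat L(\phi\circ T_K))\circ T_K^{-1}$. The trace commutes with affine pullback, so \ref{lift1} carries over.

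\textbf{Scaling.} Shape regularity gives $|K|\sim h_K^3$, $|F|\sim h_F^2$, $h_F\sim h_K$, and $\|B_K\|\sim h_K$, $\|B_K^{-1}\|\sim h_K^{-1}$. The pullback rules are:
$$\|\hat\phi\|_{L^q(\hat F)}\sim h_F^{-\frac2q}\|\phi\|_{L^q(F)},\qquad |\hat\phi|_{W^{1-\frac1q,q}(\hat F)}\le C\,h_F^{1-\frac1q-\frac2q}|\phi|_{W^{1-\frac1q,q}(F)}.$$
The second is the fractional seminorm scaling of \cite[Lemma 2.2]{ern2017finite}, already used in Lemma \ref{inv ineq}, applied in dimension 2. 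On the cell side,
$$\|v\|_{L^{r}(K)}\sim h_K^{\frac3r}\|\hat v\|_{L^r(\hat K)},\qquad \|\nabla v\|_{L^q(K)}\le Ch_K^{\frac3q-1}\|\hat\nabla\hat v\|_{L^q(\hat K)}.$$

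Take $r=\frac{q}{q-1}$, so $\frac3r=3-\frac3q$. Then
$$h_K^{\frac6q-3}\|L^K_F\phi\|_{L^r(K)}\le Ch_K^{\frac3q}\|\hat L\hat\phi\|_{L^r(\hat K)}\quad\text{and}\quad h_K\|\nabla L^K_F\phi\|_{L^q(K)}\le Ch_K^{\frac3q}\|\hat\nabla\hat L\hat\phi\|_{L^q(\hat K)}.$$
Both are bounded by $Ch_K^{\frac3q}h_F^{-\frac2q}\big(\|\phi\|_{L^q(F)}+h_F^{1-\frac1q}|\phi|_{W^{1-\frac1q,q}(F)}\big)$, which is exactly \ref{lift2}.

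The constants depend only on $\hat K$, $q$, and the shape-regularity parameter. The finitely many face choices on $\hat K$ are handled by taking the maximum constant.
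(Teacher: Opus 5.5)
Your proposal is correct and follows essentially the same route as the paper. You build a reference lifting by composing Grisvard's zero extension $W^{1-\frac1q,q}(\hat F)\to W^{1-\frac1q,q}(\partial\hat K)$ (valid since $(1-\frac1q)q<1$) with a bounded right inverse of the trace, then transport it affinely using the face and cell scaling rules and the embedding $W^{1,q}\hookrightarrow L^{\frac{q}{q-1}}$ for $q\ge\frac32$; the only cosmetic difference is that you apply the Sobolev embedding on $\hat K$ before scaling, whereas the paper states a scaled version of it directly on $K$.
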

\begin{proof}
    We start by constructing the required operator on the reference element. Let $\hat{K}$ be the standard simplex in $\mathbb{R}^3$, and $\hat{F}$ be a face of it. \cite[ Corollary 1.4.4.5]{grisvard2011elliptic} claims that the zero extension from $\hat{F}$ to $\partial\hat{K}$, $\tilde{\cdot}:W^{1-\frac{1}{q},q}( \hat{F})\to W^{1-\frac{1}{q},q}(\partial \hat{K})$ such that
    \begin{equation*}
        \tilde{\phi}=
        \begin{cases}
            \phi\quad\text{on }F\\
            0\quad\text{on }\partial K\backslash F,
        \end{cases}
    \end{equation*}
    is a bounded linear operation in $\mathcal{L}(W^{1-\frac{1}{q},q}( \hat{F}), W^{1-\frac{1}{q},q}(\partial \hat{K}))$. In addition, \cite[Theorem 1.5.1.3]{grisvard2011elliptic} claims that the trace map $\gamma:W^{1,q}(\hat{K})\to W^{1-\frac{1}{q},q}(\partial \hat{K})$ admits a continuous right inverse: 
    \begin{equation*}
        L_{\partial \hat{K}}^{\hat{K}}:\mathcal{L}(W^{1-\frac{1}{q},q}(\partial \hat{K}),W^{1,q}(\hat{K}))\text{ such that }\gamma\circ L_{\partial \hat{K}}^{\hat{K}}=Id.
    \end{equation*}
    Hence, we can define $L_{\hat{F}}^{\hat{K}}$ as the composition of the two:
    \begin{equation*}
        L_{\hat{F}}^{\hat{K}}(\phi)=L_{\partial\hat{F}}^{\hat{K}}(\tilde{\phi}),\;\forall\phi\in W^{1,q}(\hat{F}).
    \end{equation*}
    \begin{enumerate}
        \item Composing properties of the two operators, condition \ref{lift1} holds automatically.
        \item We establish the result for $K\in\mathcal{T}_h$. Obviously there exists an affine map $T_K:\hat{K}\to K$ such that $\|B_K^{-1}\|\ge\sigma\|B_K\|$. Therefore, by the scaling argument we have
        \begin{equation*}
            \|v\|_{L^q(K)}+h_K\|\nabla v\|_{L^q(K)}\le Ch_K^{\frac{3}{q}}\|v\circ T_K\|_{L^q(\hat{K})}+\|\hat{\nabla}(v\circ T_K)\|_{L^q(\hat{K})},\;\forall v\in W^{1,q}(K)
        \end{equation*}
        and 
        \begin{equation*}
            \|\phi\circ T_K\|_{L^q(\hat{F})}+|\phi\circ T_K|_{W^{1-\frac{1}{q},q}(\hat{F})}\le Ch_F^{-\frac{2}{q}}(\|\phi\|_{L^q(F)}+h_F^{1-\frac{1}{q}}|\phi|_{W^{1-\frac{1}{q},q}(F)}),\;\forall\phi\in W^{1-\frac{1}{q},q}(F).
        \end{equation*}
        Besides, since $q\in(\frac{3}{2},2)$, we have $\frac{3q}{3-q}>\frac{q}{q-1}$, and the Sobolev Embedding Theorem gives
        \begin{equation*}
            h_K^{\frac{6}{q}-3}\|v\|_{L^\frac{q}{q-1}(K)}\le C (\|v\|_{L^q(K)}+h_K\|\nabla v\|_{L^q(K)}),\;\forall v\in W^{1,q}(K)
        \end{equation*}
        We define $L^K_F$ as $L^K_F(\phi)=L_{\hat{F}}^{\hat{K}}(\phi\circ T_K)\circ T_K^{-1}$, and apply the estimates above to derive \ref{lift2}.
    \end{enumerate}
\end{proof}
With this face-to-cell lifting, we can define the following bilinear form:
\begin{definition}
    We specify the lifting operator $L^K_F$ constructed from $W^{1-\frac{1}{q_3},q_3}(F)$ to $W^{1,q_3}(K)$, and define $ n_\#: E_\# \times E_h \to \mathbb{R}$ such that for any $ (\bm{w}, \bm{v}_h) \in E_\# \times E_h $,
    \begin{equation*}
        \begin{split}
        n_\#( \bm{w}, \bm{v}_h )=\sum_{F\in\mathcal{F}_h}\sum_{K\in\mathcal{T}_F}\frac{\epsilon_{K,F}}{2}(( A(\nabla\times)^2 \bm{w}|_K,\nabla\times L_F^K([\![\nabla\times  \bm{v}_h  ]\!]))_K\\
        -\langle\nabla\times A(\nabla\times)^2 \bm{w}|_K,L_F^K([\![\nabla\times  \bm{v}_h  ]\!])\rangle_K).
    \end{split}
    \end{equation*}
\end{definition}

\begin{theorem}
    $n_\#$ is a well-defined continuous bilinear form on $E_\#\times E_h$ with the estimate
    \begin{equation*}
        |n_\#(\bm{w}, \bm{v}_h )|\le C\| \bm{w} \|_{E_\#}\| \bm{v}_h \|_{E_h}.
    \end{equation*}
\end{theorem}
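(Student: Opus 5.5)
The plan is to bound each summand of $n_\#$ separately, for a fixed face $F$ and cell $K\in\mathcal{T}_F$, by Hölder's inequality. We then use the lifting estimate \ref{lift2} with $q=q_3$ to convert norms of $L_F^K([\![\nabla\times\bm{v}_h]\!])$ into face norms of the jump. Finally, a discrete inverse inequality bounds those face norms by the penalty term of $\|\bm{v}_h\|_{E_h}$. Write $\phi:=[\![\nabla\times\bm{v}_h]\!]|_F$.

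First, $\phi$ is a polynomial on $F$. As observed in the proof of Lemma \ref{step 1}, the $H(\text{curl})$-conformity of $\bm{v}_h$ gives $\|\phi\|_{L^2(F)}=\|\bm{n}_F\times\phi\|_{L^2(F)}$. Combining Lemma \ref{inv ineq} on the face (fractional seminorm, $s=1-\frac1{q_3}$) with the $L^{q_3}$–$L^2$ comparison on $F$ (via Hölder, since $q_3<2$), we get
\[
\|\phi\|_{L^{q_3}(F)}+h_F^{1-\frac1{q_3}}|\phi|_{W^{1-\frac1{q_3},q_3}(F)}\le C h_F^{2(\frac1{q_3}-\frac12)}\|\bm{n}_F\times\phi\|_{L^2(F)} .
\]
This membership also shows $\phi\in W^{1-1/q_3,q_3}(F)$, so the lifting is defined. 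Inserting this into \ref{lift2} and using shape regularity ($h_F\simeq h_K$) yields
\[
\|\nabla L_F^K\phi\|_{L^{q_3}(K)}\le C h_K^{\frac3{q_3}-\frac52}\|\bm{n}_F\times\phi\|_{L^2(F)},\qquad \|L_F^K\phi\|_{L^{q_3'}(K)}\le Ch_K^{\frac3{q_3}-\frac32-\frac6{q_3}+3}\|\bm{n}_F\times\phi\|_{L^2(F)},
\]
where $q_3'=\frac{q_3}{q_3-1}$. Note that $\frac3{q_3}-\frac52=(\frac3{q_3}-2)-\frac12$.

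For the term $\langle\nabla\times A(\nabla\times)^2\bm{w},L_F^K\phi\rangle_K$, Hölder gives $\|\nabla\times A(\nabla\times)^2\bm{w}\|_{L^{q_3}(K)}\|L_F^K\phi\|_{L^{q_3'}(K)}$. The powers of $h$ combine as $h_K^{\frac52-\frac3{q_3}}\cdot h_K^{-\frac12}$. The first factor is exactly the weight in $\|\bm{w}\|_{E_\#}$, and the second is the weight $h_F^{-1/2}$ of the penalty norm. The term $(A(\nabla\times)^2\bm{w},\nabla\times L_F^K\phi)_K$ is the delicate one. Its natural Hölder pairing is $L^{q_3'}\times L^{q_3}$, but $A(\nabla\times)^2\bm{w}$ is only controlled in $H^{s_2}(K)$. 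I would use the fractional Sobolev embedding $H^{s_2}(K)\hookrightarrow L^{q_3'}(K)$, which holds precisely because $q_3>\frac6{3+2s_2}$, i.e. $q_3'\le\frac6{3-2s_2}$; this is the role of that assumption. The scaled version on $K$ reads
\[
\|g\|_{L^{q_3'}(K)}\le C h_K^{\frac3{q_3'}-\frac32}\bigl(\|g\|_{L^2(K)}+h_K^{s_2}|g|_{H^{s_2}(K)}\bigr),
\]
obtained by pulling back to $\hat K$, using the scaling of \cite[Lemma 2.2]{ern2017finite} already used in Lemma \ref{inv ineq}. Multiplying by $\|\nabla L_F^K\phi\|_{L^{q_3}(K)}$, the exponents sum to $\frac3{q_3'}-\frac32+\frac3{q_3}-\frac52=-1$. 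This leaves $h_K^{-1}$, which must split as $h_K^{-1/2}$ times $h_K^{-1/2}$. The first $h_K^{-1/2}$ is absorbed by the penalty weight $h_F^{-1/2}\|\bm{n}_F\times\phi\|_{L^2(F)}$.

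This leaves a surplus $h_K^{-1/2}$ on $\|A(\nabla\times)^2\bm{w}\|_{L^2(K)}$, and this is the step I expect to be the main obstacle. I would first recheck the exponent bookkeeping in \ref{lift2}, since with the scaling there the power may in fact balance to $h_K^{0}$ against $h_F^{-1/2}$. If a genuine $h^{-1/2}$ remains, I would rewrite $n_\#$ to avoid it. Specifically, for $\bm{w}\in V_s$, both terms together equal, by integration by parts on $K$ and \ref{lift1}, the face pairing $\langle A(\nabla\times)^2\bm{w},\bm{n}\times\phi\rangle_F$. For the $E_h$ component, the same identity holds, and the classical discrete trace inequality (Lemma \ref{trace ineq}) gives $h_F^{1/2}\|\{\!\{A(\nabla\times)^2\bm{w}_h\}\!\}\|_{L^2(F)}\le C\|A(\nabla\times)^2\bm{w}_h\|_{L^2(K)}$. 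Thus the estimate would be done termwise on the decomposition $\bm{w}=\bm{w}_s+\bm{w}_h$, and the direct Hölder bound would be used only where the weights match.

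The last step is summation. Cauchy–Schwarz over $F$ and $K\in\mathcal{T}_F$, together with the bounded number of faces per cell, yields $|n_\#(\bm{w},\bm{v}_h)|\le C\|\bm{w}\|_{E_\#}\|\bm{v}_h\|_{E_h}$. Bilinearity is clear, and well-definedness follows from the finiteness of each term shown above.
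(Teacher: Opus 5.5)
Your route is the paper's route: H\"older on each $K$, the scaled embedding $\|g\|_{L^{q_3'}(K)}\le Ch_K^{\frac32-\frac3{q_3}}(\|g\|_{L^2(K)}+h_K^{s_2}|g|_{H^{s_2}(K)})$ for $g=A(\nabla\times)^2\bm{w}$, the lifting estimate \eqref{lift2}, and face inverse inequalities. However, the two lifting bounds you display are off by a factor $h_K^{-1/2}$. That slip is the only source of the ``surplus'' you single out as the main obstacle.

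Your face estimate $\|\phi\|_{L^{q_3}(F)}+h_F^{1-1/q_3}|\phi|_{W^{1-1/q_3,q_3}(F)}\le Ch_F^{\frac2{q_3}-1}\|\phi\|_{L^2(F)}$ is correct. Inserting it into \eqref{lift2}, whose right-hand side carries $h_K^{3/q_3}h_F^{-2/q_3}$, gives
\[
\|\nabla L_F^K\phi\|_{L^{q_3}(K)}\le Ch_K^{\frac3{q_3}-2}\|\phi\|_{L^2(F)},\qquad \|L_F^K\phi\|_{L^{q_3'}(K)}\le Ch_K^{2-\frac3{q_3}}\|\phi\|_{L^2(F)},
\]
not $h_K^{\frac3{q_3}-\frac52}$ and $h_K^{\frac32-\frac3{q_3}}$. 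With these exponents both terms balance:
\begin{itemize}
\item The second-order term carries $h_K^{\frac32-\frac3{q_3}}\cdot h_K^{\frac3{q_3}-2}=h_K^{-\frac12}$, which is exactly the penalty weight.
\item The third-order term carries $h_K^{2-\frac3{q_3}}=h_K^{\frac52-\frac3{q_3}}\cdot h_K^{-\frac12}$.
\end{itemize}
Your stated balance for the third-order term was already this correct one. It is inconsistent with your displayed $L^{q_3'}$ bound, which would have left an $h_K^{-1}$ there as well.

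So the recheck you anticipated settles everything, and Cauchy--Schwarz finishes the proof exactly as in the paper. The paper packages the two bounds as $h_K^{\frac3{q_3}-\frac52}\|L_F^K\phi\|_{L^{q_3'}(K)}+h_K^{\frac32-\frac3{q_3}}\|\nabla\times L_F^K\phi\|_{L^{q_3}(K)}\le Ch_F^{-\frac12}\|\bm{n}_F\times\phi\|_{L^2(F)}$.

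You should discard the fallback, because it would not have rescued the argument had a surplus really existed.
\begin{itemize}
\item For the $V_s$ part, $A(\nabla\times)^2\bm{w}_s$ lies only in $H^{s_2}$ with $s_2<\frac12$, so it has no $L^2(F)$ trace. The face pairing $\langle A(\nabla\times)^2\bm{w}_s,\bm{n}\times\phi\rangle_F$ makes sense only through the lifting, and estimating it is exactly the H\"older bound you wanted to bypass. The discrete trace inequality applies only to the polynomial part.
\item $\|\cdot\|_{E_\#}$ is not an infimum over the (non-unique) decompositions $\bm{w}=\bm{w}_s+\bm{w}_h$. Termwise bounds by $\|\bm{w}_s\|_{E_\#}+\|\bm{w}_h\|_{E_\#}$ therefore do not yield $C\|\bm{w}\|_{E_\#}$.
\end{itemize}
The paper uses the decomposition only to know that $A(\nabla\times)^2\bm{w}|_K\in H^{s_2}(K)$ and $\nabla\times A(\nabla\times)^2\bm{w}|_K\in L^{q_3}(K)$. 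It then bounds the cellwise quantities of $\bm{w}$ itself, which are components of $\|\bm{w}\|_{E_\#}$.
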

\begin{proof}
    First, since $s_2\in(0,\frac{1}{2})$, $q_3\in(\frac{3}{2},2)$, which means that $L^K_F:W^{1-\frac{1}{q_3},q_3}(F)\to W^{1,q_3}(K)$ is well defined. \par
    Next, notice that for any $ \bm{v}_h \in E_h$, $[\![\nabla\times  \bm{v}_h  ]\!]$ is a polynomial on $F$, and shall have a vanishing normal component. Thus, it lives in both spaces: $W^{1-\frac{1}{q_3},q_3}(F)$ and $L^2(F)$, and by \ref{lift2} and the discrete inverse inequality,
    \begin{align*}
        &h_K^{\frac{3}{q_3}-\frac{5}{2}}\|L^K_F([\![\nabla\times  \bm{v}_h ]\!])\|_{L^{\frac{q_3}{q_3-1}}(K)}+h_K^{\frac{3}{2}-\frac{3}{q_3}}\|\nabla\times L^K_F([\![\nabla\times  \bm{v}_h ]\!])\|_{L^{q_3}(K)}\\
        &\quad\le h_K^{\frac{1}{2}-\frac{3}{q_3}}Ch_K^{\frac{3}{q_3}}h_F^{-\frac{2}{q_3}}(\|[\![\nabla\times  \bm{v}_h ]\!]\|_{L^{q_3}(F)}+h_F^{1-\frac{1}{q_3}}|[\![\nabla\times  \bm{v}_h ]\!]|_{W^{1-\frac{1}{q_3},q_3}(F)})\\
        &\quad\le Ch_K^\frac{1}{2}h_F^{-1}\|[\![\nabla\times  \bm{v}_h ]\!]\|_{L^{2}(F)}\\
        &\quad= Ch_K^\frac{1}{2}h_F^{-1}\| \bm{n}_F\times[\![\nabla\times  \bm{v}_h ]\!]\|_{L^{2}(F)}\\
        &\quad\le C(\frac{\tau}{h_F})^\frac{1}{2}\| \bm{n}_F\times[\![\nabla\times  \bm{v}_h ]\!]\|_{L^{2}(F)}.
    \end{align*}
    \par
    Then, for any $ \bm{w} \in E_\#$, by definition $ \bm{w} $ admits a decomposition $ \bm{w} = \bm{w}_s+ \bm{w}_h $, where $ \bm{w}_s\in V_s $ and $ \bm{w}_h \in E_h$. Hence, $ A(\nabla\times)^2 \bm{w}|_K\in H^{s_2}(K)$ and $ \nabla\times A(\nabla\times)^2 \bm{w}|_K\in L^{q_3}(K)$. Notice that $\frac{q_3}{q_3-1}<\frac{6}{3-2s_2}$, and by the Sobolev Embedding Theorem, $A(\nabla\times)^2 \bm{w}|_K\in L^{\frac{q_3}{q_3-1}}(K)$, together with the estimate
    \begin{equation*}
        \|A(\nabla\times)^2 \bm{w} \|_{L^{\frac{q_3}{q_3-1}}(K)} \le C h_K^{\frac{3}{2}-\frac{3}{q_3}} (\| A(\nabla\times)^2 \bm{w} \|_{L^{2}(K)} + h_K^{s_2}| A(\nabla\times)^2 \bm{w} |_{H^{s_2}(K)}).
    \end{equation*}
    \par
    Therefore, both parts of $n_\#(\bm{w}, \bm{v}_h )$ can be regarded as assembling the pairings between $L^{\frac{q_3}{q_3-1}}(K)$ and $ L^{q_3}(K)$ for $ K \in  \mathcal{T}_h $. Moreover, we have the estimate
    \begin{align*}
        |n_\#(\bm{w}, \bm{v}_h )|\le & C\sum_{K\in\mathcal{T}_h}\sum_{F\in\partial K}\frac{1}{2}(\| A(\nabla\times)^2 \bm{w} \|_{L^{\frac{q_3}{q_3-1}}(K)}\|\nabla\times L^K_F([\![\nabla\times  \bm{v}_h ]\!])\|_{L^{q_3}(K)}\\
        & + \|\nabla\times A(\nabla\times)^2 \bm{w}\|_{L^{q_3}(K)}\|L^K_F([\![\nabla\times  \bm{v}_h ]\!])\|_{L^{\frac{q_3}{q_3-1}}(K)})\\
        \le & C \sum_{K\in\mathcal{T}_h}\sum_{F\in\partial K}(h_K^{\frac{3}{2}-\frac{3}{q_3}}(\|A(\nabla\times)^2 \bm{w}\|_{L^{2}(K)}+h_K^{s_2}|A(\nabla\times)^2 \bm{w} |_{H^{s_2}(K)})\\
        & \qquad \cdot \|\nabla\times L^K_F([\![\nabla\times \bm{v}_h ]\!])\|_{L^{q_3}(K)}\\
        & + h_K^{\frac{5}{2}-\frac{3}{q_3}}\|\nabla\times A(\nabla\times)^2 \bm{w} \|_{L^{q_3}(K)}h_K^{\frac{3}{q_3}-\frac{5}{2}}\|L^K_F([\![\nabla\times \bm{v}_h ]\!])\|_{L^{\frac{q_3}{q_3-1}}(K)})\\
        \le & C \sum_{K\in\mathcal{T}_h}(\| A(\nabla\times)^2 \bm{w} \|_{L^{2}(K)}+h_K^{s_2}| A(\nabla\times)^2 \bm{w} |_{H^{s_2}(K)} + h_K^{\frac{5}{2}-\frac{3}{q_3}}\|\nabla\times A(\nabla\times)^2 \bm{w}\|_{L^{q_3}(K)})\cdot\\
        & \sum_{K\in\mathcal{T}_h}\sum_{F\in\partial K}(h_K^{\frac{3}{q_3}-\frac{5}{2}}\|L^K_F([\![\nabla\times  \bm{v}_h ]\!])\|_{L^{\frac{q_3}{q_3-1}}(K)}+h_K^{\frac{3}{2}-\frac{3}{q_3}}\|\nabla\times L^K_F([\![\nabla\times  \bm{v}_h ]\!])\|_{L^{q_3}(K)}),
    \end{align*}    
    in which the quantity
    \begin{align*}
        \sum_{K\in\mathcal{T}_h}\| A(\nabla\times)^2 \bm{w} \|_{L^{2}(K)} + h_K^{s_2}| A(\nabla\times)^2 \bm{w} |_{H^{s_2}(K)} + h_K^{\frac{5}{2}-\frac{3}{q_3}}\|\nabla\times A(\nabla\times)^2 \bm{w} \|_{L^{q_3}(K)}
    \end{align*}
    is a component of $\| \bm{w} \|_{E_\#}$, and we have shown that 
    \begin{align*}
        &h_K^{\frac{3}{q_3}-\frac{5}{2}}\|L^K_F([\![\nabla\times  \bm{v}_h ]\!])\|_{L^{\frac{q_3}{q_3-1}}(K)}+h_K^{\frac{3}{2}-\frac{3}{q_3}}\|\nabla\times L^K_F([\![\nabla\times  \bm{v}_h ]\!])\|_{L^{q_3}(K)}\\
        &\quad\le C(\frac{\tau}{h_F})^\frac{1}{2}\| \bm{n}_F\times[\![\nabla\times  \bm{v}_h ]\!]\|_{L^{2}(F)},
    \end{align*}
    a component of $\| \bm{v}_h \|_{E_h}$. In conclusion,
    \begin{equation*}
        |n_\#( \bm{w}, \bm{v}_h )|\le C\|\bm{w}\|_{E_\#}\| \bm{v}_h \|_{E_h},\;\forall w\in E_\#,\; \bm{v}_h \in E_h.
    \end{equation*}
\end{proof}
Next, we illustrate a useful fact.
\begin{lemma}\label{density}
    For any $\phi\in L^2(\Omega)$ such that $\nabla\times\phi\in L^{q}(\Omega)$, we can raise a sequence $\{\psi_n\}_n$ in $C^\infty_c(\Omega)^3$ such that 
    \begin{equation*}
        \lim_{n\to\infty}\|\psi_n-\phi\|_{L^2(\Omega)}=0,\;\lim_{n\to\infty}\|\nabla\times\psi_n-\nabla\times \phi\|_{L^{q}(\Omega)}=0,
    \end{equation*}
    simultaneously. 
\end{lemma}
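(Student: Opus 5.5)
The plan is the classical \emph{localize, extend by zero, shrink, mollify} argument for graph-norm spaces of the type $\{\phi\in L^2(\Omega)^3:\ \nabla\times\phi\in L^{q}(\Omega)^3\}$, run with two different integrability exponents. One point must be flagged at the outset. For the approximants to have compact support in $\Omega$, I need the zero extension $\tilde\phi$ of $\phi$ to $\mathbb{R}^3$ to satisfy $\nabla\times\tilde\phi=\widetilde{\nabla\times\phi}$ in $\mathcal{D}'(\mathbb{R}^3)$. This holds exactly when the tangential trace $\bm{n}\times\phi$ vanishes on $\partial\Omega$. The statement as worded contains no such hypothesis, so the argument proves it only under that condition (or, alternatively, for the $C^\infty(\overline\Omega)$ variant, which is presumably what the later application in Section 4 requires). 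I will therefore aim to reduce the lemma to, and explicitly check, the identity
\[
\langle \nabla\times\phi,\bm\psi\rangle_\Omega-\langle \phi,\nabla\times\bm\psi\rangle_\Omega=0,\qquad\forall\bm\psi\in C^\infty_c(\mathbb{R}^3)^3 .
\]
This identity is the only place where the boundary enters, and it is the main obstacle.

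\textbf{Step 1 (localization).} Since $\Omega$ is a bounded Lipschitz polyhedron, I cover $\overline\Omega$ by finitely many open sets $U_0\Subset\Omega$ and $U_1,\dots,U_M$, where each $\Omega\cap U_j$ is star-shaped with respect to a ball $B_j$. I take a subordinate partition of unity $\{\chi_j\}$ and write $\phi=\sum_j\chi_j\phi$, using
\[
\nabla\times(\chi_j\phi)=\chi_j\nabla\times\phi+\nabla\chi_j\times\phi .
\]
Because $\nabla\chi_j$ is bounded and $\Omega$ is bounded, $\nabla\chi_j\times\phi\in L^2\subset L^q$ for $q<2$. So each piece again lies in the space, and it suffices to approximate each $\chi_j\phi$ separately.

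\textbf{Step 2 (interior piece).} For $\chi_0\phi$, whose support is compact in $\Omega$, I take the standard mollification $\rho_\varepsilon*(\chi_0\phi)$. For $\varepsilon$ small it belongs to $C^\infty_c(\Omega)^3$. Mollification commutes with the curl. It converges in $L^2$ and its curl converges in $L^{q}$, because mollifiers are approximate identities in every $L^p$ with $1\le p<\infty$.

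\textbf{Step 3 (boundary pieces).} For $j\ge1$ I extend $\chi_j\phi$ by zero. Given the tangential-trace identity above, the curl of the extension lies in $L^q(\mathbb{R}^3)$ and equals the zero extension of $\nabla\times(\chi_j\phi)$. I then contract toward the centre $x_j$ of $B_j$, setting
\[
\phi_\lambda(x)=(\chi_j\tilde\phi)\bigl(x_j+\lambda^{-1}(x-x_j)\bigr),\qquad \lambda<1 .
\]
Star-shapedness ensures that its support lies in $\lambda(\overline{\Omega\cap U_j}-x_j)+x_j\Subset\Omega$, and the curl simply picks up the factor $\lambda^{-1}$. Continuity of dilations in $L^2$ and in $L^{q}$ gives convergence as $\lambda\to1$. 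Finally I mollify with $\varepsilon<\operatorname{dist}(\operatorname{supp}\phi_\lambda,\partial\Omega)$ and use a diagonal choice of $(\lambda,\varepsilon)$ to obtain $\psi_n$.

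The main obstacle is Step 3, specifically the zero-extension identity. I would first try to verify it through the weak tangential trace, i.e.\ the pairing of $\bm{n}\times\phi$ with smooth test functions on $\partial\Omega$, which is well defined because $\phi\in L^2$ and $\nabla\times\phi\in L^{q}$. Where that trace cannot be shown to vanish, I would restrict the statement to $C^\infty(\overline\Omega)^3$ approximants: in that case Step 3 is replaced by an outward dilation, and the zero extension is no longer needed.
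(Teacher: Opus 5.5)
Your argument is the same classical scheme as the paper's, and it is correct once you add the hypothesis you flag. The paper reduces to a Lipschitz hypograph $\{x_3<\zeta(x')\}$, pushes the support inward with the translation $\phi_\delta(x)=\phi(x',x_3+\delta)$, and mollifies with radius below $\delta$. Your dilation toward the centre of a star-shaping ball plays the role of that translation. Your partition of unity makes explicit the localization that the paper only asserts; note that the step $\nabla\chi_j\times\phi\in L^2\subset L^q$ uses $q\le 2$, which covers the case $q=q_3$ that is actually used.

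Your caveat is not a technicality. It is a genuine defect of both the statement and the paper's proof. The paper mollifies $\phi_\delta$ on all of $\mathbb{R}^3$ and claims both $\|\nabla\times\phi_\delta-\nabla\times\psi\|_{L^q(\mathbb{R}^3)}<\varepsilon/4$ and $\operatorname{supp}\psi\subset\Omega$. This treats $\phi_\delta$ as extended by zero, and silently assumes that the distributional curl of that extension has no singular part on the shifted boundary. That is precisely your identity, i.e.\ $\bm{n}\times\phi=0$.

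Without this hypothesis the lemma is false. Take $\phi\equiv\bm{e}_1$ and $\bm{\chi}=x_2\bm{e}_3$, so that $\nabla\times\bm{\chi}=\bm{e}_1$. Every $\psi_n\in C^\infty_c(\Omega)^3$ satisfies $(\psi_n,\bm{e}_1)_\Omega=(\nabla\times\psi_n,\bm{\chi})_\Omega$. If $\psi_n\to\phi$ in $L^2$ and $\nabla\times\psi_n\to 0$ in $L^q$, the left side tends to $|\Omega|$ while the right side tends to $0$. So do not try to derive the trace identity from $\phi\in L^2$ and $\nabla\times\phi\in L^q$; state it as a hypothesis, and your proof is then complete.

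Your guess about Section 4 is also right. In general $A(\nabla\times)^2\bm{u}$ has no vanishing tangential trace, since both boundary conditions on $\bm{u}$ are essential. What is needed there is therefore the $C^\infty(\overline{\Omega})^3$ variant. It works for two reasons:
\begin{enumerate}
\item $\bm{v}_h\in H_0(\text{curl};\Omega)$ kills $\int_{\partial\Omega}(\bm{n}\times\bm{v}_h)\cdot\nabla\times\sigma$. This requires boundary faces to carry weight $\epsilon_{K,F}$ rather than $\epsilon_{K,F}/2$ in $n_\#$.
\item The approximants must also converge in $L^{q_3/(q_3-1)}$, not merely in $L^2$, because $\nabla\times L^K_F(\cdot)$ lies only in $L^{q_3}$. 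Your outward-dilation-and-mollify argument supplies this too, since $H^{s_2}(\Omega)\hookrightarrow L^{q_3/(q_3-1)}(\Omega)$.
\end{enumerate}
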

\begin{proof}
    Since $\Omega$ is Lipschitz, we only need to reveal this conclusion on a Lipschitz hypograph
    \begin{equation*}
        \Omega'=\{x\in\mathbb{R}^3|x_3<\zeta(x')\},
    \end{equation*}
    where $x'=(x_1,x_2)$ and $\zeta$ is a Lipschitz function on $\mathbb{R}^2$.\par
    The first step is to determine, for an arbitrary $\varepsilon>0$ and $\phi$ as in the setting, some $\delta>0$ such that for
    \begin{equation*}
        \phi_\delta(x)=\phi(x',x_3+\delta),
    \end{equation*}
    the two relationships hold simultaneously:
    \begin{equation*}
    \begin{split}
         \|\phi-\phi_\delta\|_{L^2(\Omega')}<\frac{\varepsilon}{4};\\
         \|\nabla\times \phi-\nabla\times \phi_\delta\|_{L^q(\Omega')}<\frac{\varepsilon}{4}.
    \end{split}
    \end{equation*}
    The first term holds for $\delta$ is sufficiently small, since $\phi$ is absolutely continuous, and
    \begin{equation*}
        \|\phi-\phi_\delta\|_{L^2(\Omega')}\to0,\;\delta\to0.
    \end{equation*}
    For the second part, we notice that $\nabla\times (\phi_\delta)=(\nabla\times \phi)_\delta$, and by the absolute continuity,
    \begin{equation*}
        \|\nabla\times \phi-\nabla\times \phi_\delta\|_{L^q(\Omega')}<\frac{\varepsilon}{4}
    \end{equation*}
    when $\delta$ is sufficiently small.\par
    The second step is to raise some $\psi\in C^\infty_c(\Omega)$ such that $\|\phi-\psi\|_{H^r(\Omega')}+\|\nabla\times \phi-\nabla\times \psi\|_{L^q(\Omega')}<\varepsilon$. Doing convolution with the standard mollifier with the radius of the support less that $\delta$, we construct $\psi\in C^\infty(\mathbb{R}^3)$ such that 
    \begin{equation*}
    \begin{split}
         \|\phi_\delta-\psi\|_{L^2(\mathbb{R}^3)}<\frac{\varepsilon}{4};\\
         \|\nabla\times\phi_\delta-\nabla\times \psi\|_{L^q(\mathbb{R}^3)}<\frac{\varepsilon}{4}.
    \end{split}
    \end{equation*}
    Moreover, $\text{supp}\psi\subset\text{supp}\phi|_\delta+\frac{\delta}{2}\subset\Omega$, i.e., $\psi\in C^\infty_c(\Omega)$. Hence we conclude
    \begin{align*}
        &\|\phi-\psi\|_{L^2(\Omega')}+\|\nabla\times\phi-\nabla\times \psi\|_{L^q(\Omega')} \\
        &\le \|\phi-\phi_\delta\|_{L^2(\Omega')}+\|\nabla\times \phi-\nabla\times \phi_\delta\|_{L^q(\Omega')}\\
        &\quad+ \|\phi_\delta-\psi\|_{L^2(\mathbb{R}^3)}+\|\nabla\times \phi_\delta-\nabla\times \psi\|_{L^q(\mathbb{R}^3)}\\
        &<\varepsilon.
    \end{align*}
\end{proof}
With this lemma we can prove the following two crucial identities:
\begin{theorem}
    Restricting the performance of $n_\#$ on two subspaces of $E_\#$, we have
    \begin{enumerate}
        \item $\forall  \bm{w}_h \in E_h$, 
        \begin{equation}\label{bili id 1}
            n_\#( \bm{w}_h , \bm{v}_h )=\sum_{F\in\mathcal{F}_h}\langle \{\!\{A(\nabla\times\nabla\times  \bm{w}_h )\}\!\}, \bm{n}_F\times L_F^K([\![\nabla\times  \bm{v}_h  ]\!])\rangle_F;
        \end{equation}
        \item $\forall w\in V_s$, 
        \begin{equation}\label{bili id 2}
            n_\#(\bm{w}, \bm{v}_h )=\sum_{K\in\mathcal{T}_h}(A(\nabla\times)^2 \bm{w} ,(\nabla\times)^2 \bm{v}_h )_K-((\nabla\times)^2 A(\nabla\times)^2 \bm{w}, \bm{v}_h )_K.
        \end{equation}
    \end{enumerate}
\end{theorem}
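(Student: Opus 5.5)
Both identities come from the same cellwise integration-by-parts formula for the curl: for a cell $K$, a field $\bm{\sigma}$ with enough regularity, and $\bm{\phi} \in W^{1,q_3}(K)$,
\begin{equation*}
    (\bm{\sigma}, \nabla\times\bm{\phi})_K - (\nabla\times\bm{\sigma}, \bm{\phi})_K = \langle \bm{n}_K \times \bm{\phi}, \bm{\sigma}\rangle_{\partial K}.
\end{equation*}
We apply it with $\bm{\sigma} = A(\nabla\times)^2\bm{w}|_K$ and $\bm{\phi} = L^K_F([\![\nabla\times \bm{v}_h]\!])$. Since the mesh is subordinate to $\{\Omega_i\}$ in this section, $A$ is constant on each $K$. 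By \ref{lift1}, the trace of $\bm{\phi}$ is $[\![\nabla\times\bm{v}_h]\!]$ on $F$ and zero on $\partial K\backslash F$, so the right-hand side collapses to a single face term $\epsilon_{K,F}\langle \bm{n}_F\times[\![\nabla\times\bm{v}_h]\!], \bm{\sigma}|_K\rangle_F$, using $\bm{n}_K = \epsilon_{K,F}\bm{n}_F$ on $F$.

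\textbf{Identity \ref{bili id 1}.} For $\bm{w}_h\in E_h$, $\bm{\sigma}$ is a polynomial on $K$, so the formula is classical. Each summand of $n_\#$ becomes $\frac{\epsilon_{K,F}^2}{2}\langle A(\nabla\times)^2\bm{w}_h|_K, \bm{n}_F\times[\![\nabla\times\bm{v}_h]\!]\rangle_F$, and $\epsilon_{K,F}^2=1$. Summing over $K\in\mathcal{T}_F$ gives the average $\{\!\{A(\nabla\times)^2\bm{w}_h\}\!\}$ on interior faces. On boundary faces we take the convention (half weight/average) consistent with the definition of $\{\!\{\cdot\}\!\}$, adjusting the factor if needed. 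This is the stated identity, read with the trace of $L^K_F(\cdot)$ equal to the jump itself on $F$.

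\textbf{Identity \ref{bili id 2}.} For $\bm{w}\in V_s$, we first justify the formula for $\bm{\sigma}\in L^{q_3/(q_3-1)}(K)$ (from the Sobolev embedding of $H^{s_2}$) with $\nabla\times\bm{\sigma}\in L^{q_3}(K)$ and $\bm{\phi}\in W^{1,q_3}(K)$. The plan is:
\begin{enumerate}
    \item Use Lemma \ref{density} (on $K$, or on $\Omega$ followed by restriction) to approximate $\bm{\sigma}$ by smooth fields converging in $L^2$ with curls converging in $L^{q_3}$.
    \item Check that both volume pairings pass to the limit.
    \item Control the boundary term by $\|\gamma\bm{\phi}\|$ together with a weak tangential trace of $\bm{\sigma}$, making it only a duality pairing.
\end{enumerate}
The real point is that we never need to evaluate the face term individually. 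Because $\nabla\times\bm{\sigma}\in L^{q_3}$ globally, the global field $A(\nabla\times)^2\bm{w}$ has tangential trace continuous across faces in the weak sense. Summing over $K\in\mathcal{T}_F$, the two face contributions on each interior face cancel, and on boundary faces the contribution is compensated (this uses $\nabla\times\bm{v}_h$ and the boundary condition). Hence $n_\#(\bm{w},\bm{v}_h)$ equals the same expression with $\bm{\phi}$ replaced by any $W^{1,q_3}$ field having the same face traces.

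Next we compare with $\nabla\times \bm{v}_h$ itself. Cellwise integration by parts with $\bm{\phi}=\nabla\times\bm{v}_h|_K$, which is polynomial so the formula is safe, gives
\begin{equation*}
    \sum_K (\bm{\sigma},(\nabla\times)^2\bm{v}_h)_K - (\nabla\times\bm{\sigma},\nabla\times\bm{v}_h)_K = \sum_F \langle \bm{n}_F\times[\![\nabla\times\bm{v}_h]\!], \bm{\sigma}\rangle_F
\end{equation*}
in the weak sense, again using the continuity of the tangential trace of $\bm{\sigma}$. A second integration by parts, with $\bm{v}_h\in H_0(\text{curl};\Omega)$ and $(\nabla\times)^2 A(\nabla\times)^2\bm{w}\in L^2$, turns $(\nabla\times\bm{\sigma},\nabla\times\bm{v}_h)$ into $((\nabla\times)^2A(\nabla\times)^2\bm{w},\bm{v}_h)$ with no boundary term. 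Matching the face sums with the lifted expression then yields \ref{bili id 2}.

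\textbf{Main obstacle.} The hard step is making the face pairings rigorous when $\bm{\sigma}$ has only $H^{s_2}$ regularity with $s_2<\frac{1}{2}$, so that no trace exists. The approach is to avoid pointwise traces entirely and define everything through the lifted volume integrals, which are exactly what $n_\#$ is built from. The equality of the two representations follows by density via Lemma \ref{density}: for smooth $\bm{\sigma}$ both sides agree by the classical computation, and both sides are continuous in the $L^2\times L^{q_3}$ graph topology by the estimates in the continuity theorem for $n_\#$.
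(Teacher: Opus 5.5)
Your treatment of identity \ref{bili id 1} matches the paper, and your hesitation about boundary faces is justified. The gap is in the face bookkeeping for \ref{bili id 2}.

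\textbf{Interior faces.} The two lifted contributions on an interior face do not cancel. Since $\bm{n}_K=\epsilon_{K,F}\bm{n}_F$ on $F$, each $K\in\mathcal{T}_F$ contributes $\frac12\langle \bm{n}_F\times[\![\nabla\times\bm{v}_h]\!],\bm{\sigma}|_K\rangle_F$. Together they give $\langle \bm{n}_F\times[\![\nabla\times\bm{v}_h]\!],\{\!\{\bm{\sigma}\}\!\}\rangle_F$, which is exactly the face term that cellwise integration by parts against $\nabla\times\bm{v}_h$ produces. What the weak tangential continuity of $\bm{\sigma}$ removes is the \emph{difference} between the two representations: on both sides of an interior face, the trace of $\nabla\times\bm{v}_h|_K$ minus $\frac{\epsilon_{K,F}}{2}[\![\nabla\times\bm{v}_h]\!]$ is the same function $\{\!\{\nabla\times\bm{v}_h\}\!\}$.

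\textbf{Boundary faces.} Nothing compensates on $\partial\Omega$. The condition $\bm{v}_h\in H_0(\text{curl};\Omega)$ gives $\bm{n}\times\bm{v}_h=0$ (hence $\bm{n}\cdot\nabla\times\bm{v}_h=0$), but not $\bm{n}\times\nabla\times\bm{v}_h=0$. Also, $\bm{\sigma}=A(\nabla\times)^2\bm{w}$ obeys no tangential boundary condition. Already for $\bm{\sigma}$ smooth on $\overline{\Omega}$, the classical computation shows that the left side of \ref{bili id 2} minus the right side equals $-\frac12\sum_{F\in\mathcal{F}^\partial_h}\langle\bm{\sigma},\bm{n}_F\times\nabla\times\bm{v}_h\rangle_F$, which is nonzero in general. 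Likewise \ref{bili id 1} loses a factor $\frac12$ on boundary faces, where $\{\!\{\phi\}\!\}_F=\phi_F$. Both identities hold once the boundary-face weight is $\epsilon_{K,F}$ instead of $\frac{\epsilon_{K,F}}{2}$, consistent with that convention for $\{\!\{\cdot\}\!\}$. The paper sidesteps the boundary by computing only with $\bm{\sigma}\in C^\infty_c(\Omega)^3$. Such fields cannot approximate a field with nonzero tangential trace in the graph norm of Lemma \ref{density}, because the weak tangential trace is continuous in that norm; so this does not fix it.

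\textbf{A complete route.} With the corrected weight, your comparison with $\nabla\times\bm{v}_h$ becomes a complete proof. It needs neither face traces of $\bm{\sigma}$ nor any density argument in $\bm{\sigma}$, so it is more direct than the paper's smooth-case-plus-density route. Let $\theta_{K,F}=\frac12$ on interior faces and $\theta_{K,F}=1$ on boundary faces, and set $\bm{\Psi}|_K:=\nabla\times\bm{v}_h|_K-\sum_{F\subset\partial K}\theta_{K,F}\epsilon_{K,F}L^K_F([\![\nabla\times\bm{v}_h]\!])$. By \ref{lift1}, the traces of $\bm{\Psi}$ agree across interior faces (both equal $\{\!\{\nabla\times\bm{v}_h\}\!\}$) and vanish on $\partial\Omega$, so $\bm{\Psi}\in W^{1,q_3}_0(\Omega)^3$. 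Then $(\bm{\sigma},\nabla\times\bm{\Psi})_\Omega=(\nabla\times\bm{\sigma},\bm{\Psi})_\Omega$ follows from the definition of the weak curl and density of $C^\infty_c$ in $W^{1,q_3}_0$. This uses $\bm{\sigma}\in H^{s_2}(\Omega)^3\hookrightarrow L^{q_3/(q_3-1)}(\Omega)^3$, $\nabla\times\bm{\sigma}\in L^{q_3}(\Omega)^3$ and $W^{1,q_3}\hookrightarrow L^{q_3/(q_3-1)}$. Writing $n_\#$ cell by cell gives $n_\#(\bm{w},\bm{v}_h)=\sum_K(\bm{\sigma},(\nabla\times)^2\bm{v}_h)_K-(\nabla\times\bm{\sigma},\nabla\times\bm{v}_h)_K$, and your final integration by parts concludes.

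\textbf{If you keep the density fallback.} The fallback in your last paragraph is the paper's route, and convergence in the $L^2\times L^{q_3}$ graph norm is too weak for it. The field $\nabla\times L^K_F(\cdot)$ lies only in $L^{q_3}(K)$ with $q_3<2$, so $(\bm{\sigma}_n,\nabla\times L^K_F(\cdot))_K$ converges only if $\bm{\sigma}_n\to\bm{\sigma}$ in $L^{q_3/(q_3-1)}$. Convergence in $H^{s_2}$ suffices; mollification provides it, and this is how the continuity theorem for $n_\#$ controls $\bm{\sigma}$. You must also keep the right side in the form with $(\nabla\times\bm{\sigma}_n,\nabla\times\bm{v}_h)$ while passing to the limit, since $(\nabla\times)^2\bm{\sigma}_n$ is not controlled.
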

\begin{proof}
    \begin{enumerate}
        \item For $ \bm{w}_h \in E_h$, we can integrate by part to derive
        \begin{align*}
            &(A(\nabla\times\nabla\times  \bm{w}_h ),\nabla\times L_F^K([\![\nabla\times  \bm{v}_h  ]\!]))_K-( \nabla\times A(\nabla\times\nabla\times  \bm{w}_h ),L_F^K([\![\nabla\times  \bm{v}_h  ]\!]))_K\\
            &=\langle A(\nabla\times\nabla\times  \bm{w}_h )|_K, \bm{n}_K\times L_F^K([\![\nabla\times  \bm{v}_h  ]\!])\rangle_{\partial K}\\
            &=\varepsilon_{K,F}\langle A(\nabla\times\nabla\times  \bm{w}_h )|_K, \bm{n}_F\times L_F^K([\![\nabla\times  \bm{v}_h  ]\!])\rangle_{F}+0, 
        \end{align*}
        since $ L_F^K([\![\nabla\times  \bm{v}_h  ]\!] = 0 $ on $ \partial K \backslash F $. Therefore, 
        \begin{align*}
            &\sum_{F\in\mathcal{F}_h}\sum_{K\in\mathcal{T}_F}\frac{\epsilon_{K,F}}{2}(A(\nabla\times\nabla\times  \bm{w}_h ),\nabla\times L_F^K([\![\nabla\times  \bm{v}_h  ]\!]))_K\\
            &-( \nabla\times A(\nabla\times\nabla\times  \bm{w}_h ),L_F^K([\![\nabla\times  \bm{v}_h  ]\!]))_K\\
            &=\sum_{F\in\mathcal{F}_h}\sum_{K\in\mathcal{T}_F}\frac{1}{2}\langle A(\nabla\times\nabla\times  \bm{w}_h )|_K, \bm{n}_F\times L_F^K([\![\nabla\times  \bm{v}_h  ]\!])\rangle_{F}\\
            &=\sum_{F\in\mathcal{F}_h}\langle \{\!\{A(\nabla\times\nabla\times  \bm{w}_h )\}\!\}, \bm{n}_F\times L_F^K([\![\nabla\times  \bm{v}_h  ]\!])\rangle_{F}, \quad\forall  \bm{v}_h \in V_h .
        \end{align*}
        \item We start by illustrating \ref{bili id 2} for any $\sigma\in C^\infty_c(\Omega)^3$:
        \begin{equation*}
            \begin{aligned}
                \sum_{F\in\mathcal{F}_h}\sum_{K\in\mathcal{T}_F}&\frac{\epsilon_{K,F}}{2}(\langle\sigma,\nabla\times L_F^K([\![\nabla\times  \bm{v}_h  ]\!])\rangle_K-(\nabla\times\sigma,L_F^K([\![\nabla\times  \bm{v}_h  ]\!]))_K\\
                &=(\sigma,\nabla\times\nabla\times  \bm{v}_h )_\Omega-(\nabla\times\nabla\times \sigma, \bm{v}_h )_\Omega, \quad\forall  \bm{v}_h \in V_h .
            \end{aligned}
        \end{equation*}
        This can be done by the following: 
        \begin{align*}
            &\sum_{F\in\mathcal{F}_h}\sum_{K\in\mathcal{T}_F}\frac{\epsilon_{K,F}}{2}(\langle\sigma,\nabla\times L_F^K([\![\nabla\times  \bm{v}_h  ]\!])\rangle_K-(\nabla\times\sigma,L_F^K([\![\nabla\times  \bm{v}_h  ]\!]))_K\\
            &=\sum_{F\in\mathcal{F}_h}\sum_{K\in\mathcal{T}_F}\frac{\epsilon_{K,F}}{2}\langle\sigma, \bm{n}_K\times L_F^K([\![\nabla\times  \bm{v}_h  ]\!])\rangle_{\partial K}\\
            &=\sum_{F\in\mathcal{F}_h}\sum_{K\in\mathcal{T}_F}\frac{\epsilon_{K,F}}{2}\int_F\sigma\cdot  \bm{n}_K\times[\![\nabla\times  \bm{v}_h  ]\!]ds+0\\
            &=\sum_{F\in\mathcal{F}_h}\int_F\sigma\cdot  \bm{n}_F\times(\nabla\times  \bm{v}_h |_{K^+}-\nabla\times  \bm{v}_h |_{K^-})ds\\
             &=\sum_{F\in\mathcal{F}_h}\int_F\sigma\cdot(  \bm{n}_{K^+}\times\nabla\times  \bm{v}_h |_{K^+})ds+\int_F\sigma\cdot(  \bm{n}_{K^-}\times\nabla\times  \bm{v}_h |_{K^-})ds\\
            &=\sum_{K\in\mathcal{T}_h}\int_{\partial K}\sigma\cdot(  \bm{n}_K\times\nabla\times  \bm{v}_h |_K)ds\\
            &=\sum_{K\in\mathcal{T}_h}(\sigma,\nabla\times\nabla\times  \bm{v}_h )_K-(\nabla\times \sigma,\nabla\times  \bm{v}_h )_K\\
            &=\sum_{K\in\mathcal{T}_h}(\sigma,\nabla\times\nabla\times  \bm{v}_h )_K-(\nabla\times\nabla\times \sigma, \bm{v}_h )_K\\
            &=(\sigma,\nabla\times\nabla\times  \bm{v}_h )_\Omega-(\nabla\times\nabla\times \sigma, \bm{v}_h )_\Omega.
        \end{align*}
        Next, we verify this for an arbitrary $\bm{w}\in V_s$. We raise a sequence $\{\sigma_n\}_n$ in $C^\infty_c(\Omega)^3$ such that 
        \begin{equation*}
            \lim_{n\to\infty}\|\sigma_n-A(\nabla\times)^2 \bm{w}\|_{L^2(\Omega)}=0,\;\lim_{n\to\infty}\|\nabla\times\sigma_n-\nabla\times A(\nabla\times)^2 \bm{w}\|_{L^{q_3}(\Omega)}=0,
        \end{equation*}
        simultaneously. Therefore, take $\sigma_n$ in \ref{bili id 2} and let $ \bm{n} \to \infty $. By Lemma \ref{density}, each side of \ref{bili id 2} converges to the corresponding term in the desired identity about $\bm{w}$, respectively. Since the identity holds for each $\sigma_n$, the conclusion is solid for $\bm{w}$.
    \end{enumerate}
\end{proof}
Utilizing these two facts, we accomplish the estimate of $\omega_\#$:
\begin{theorem}
    For $ \bm{v}_h $, $a_h$, $\|\cdot\|_{E_h}$, and $\|\cdot\|_{E_\#}$ defined as in \ref{ip mtd},  $\exists \omega_\#$, $\forall  \bm{v}_h \in V_h $, 
    \begin{equation*}
        \sup_{ \bm{w}_h \in V_h }\frac{|a_h( \bm{u}_h - \bm{v}_h , \bm{w}_h )|}{\| \bm{w}_h \|_{ \bm{w}_h }}\le\omega_\#(\| \bm{u} - \bm{v}_h \|_{E_\#}+\|p-p_h\|_{H_0^1(\Omega)}),
    \end{equation*}
    where $ \bm{u}_h $ and $ \bm{u} $ are the discrete and exact solution, respectively.
\end{theorem}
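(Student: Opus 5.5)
The plan is to use the discrete problem to replace $a_h(\bm{u}_h,\bm{w}_h)$ by data, and then use the two identities \eqref{bili id 1}--\eqref{bili id 2} to turn the consistency residual into a single form $n_\#$ acting on $\bm{u}-\bm{v}_h$. Everything is done with the lifted form $n_\#$ in place of face pairings, because $A(\nabla\times)^2\bm{u}$ has no $L^2$ trace.

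\textbf{Step 1 (residual).} Fix $\bm{v}_h,\bm{w}_h\in V_h$. Since $\bm{w}_h\in V_h$ and $p_h\in Q_h$, we have $b_h(p_h,\bm{w}_h)=0$, so \eqref{ip mtd} gives $a_h(\bm{u}_h,\bm{w}_h)=(\bm{f},\bm{w}_h)_\Omega$. The strong form yields
\[
\bm{f}=(\nabla\times)^2A(\nabla\times)^2\bm{u}+\nabla p
\]
in $L^2$, using the regularity theorem and $\bm{u}\in V_s$. Hence
\[
(\bm{f},\bm{w}_h)_\Omega=((\nabla\times)^2A(\nabla\times)^2\bm{u},\bm{w}_h)_\Omega+(\nabla(p-p_h),\bm{w}_h)_\Omega ,
\]
where we again subtract $b_h(p_h,\bm{w}_h)=0$. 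The last term is bounded by $\|p-p_h\|_{H^1_0}\|\bm{w}_h\|_{E_h}$.

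\textbf{Step 2 (apply the identities).} By \eqref{bili id 2} with $\bm{w}=\bm{u}$,
\[
((\nabla\times)^2A(\nabla\times)^2\bm{u},\bm{w}_h)_\Omega=\sum_K(A(\nabla\times)^2\bm{u},(\nabla\times)^2\bm{w}_h)_K-n_\#(\bm{u},\bm{w}_h).
\]
Next, expand $a_h(\bm{v}_h,\bm{w}_h)$. The consistency face term, $-\sum_F\langle\{\!\{A(\nabla\times)^2\bm{v}_h\}\!\},\bm{n}_F\times[\![\nabla\times\bm{w}_h]\!]\rangle_F$, equals $-n_\#(\bm{v}_h,\bm{w}_h)$ by \eqref{bili id 1}. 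Here one uses that the trace of $L^K_F$ on $F$ is the jump itself, by \eqref{lift1}. Subtracting the two expressions gives
\[
a_h(\bm{u}_h-\bm{v}_h,\bm{w}_h)=\sum_K(A(\nabla\times)^2(\bm{u}-\bm{v}_h),(\nabla\times)^2\bm{w}_h)_K-n_\#(\bm{u}-\bm{v}_h,\bm{w}_h)+R_{\rm sym}+R_{\rm pen}+(\nabla(p-p_h),\bm{w}_h)_\Omega .
\]
The remaining terms are
\[
R_{\rm sym}=\sum_F\langle\{\!\{A(\nabla\times)^2\bm{w}_h\}\!\},\bm{n}_F\times[\![\nabla\times\bm{v}_h]\!]\rangle_F ,\qquad
R_{\rm pen}=-\sum_F\tfrac{\tau}{h_F}\langle\bm{n}_F\times[\![\nabla\times\bm{v}_h]\!],\bm{n}_F\times[\![\nabla\times\bm{w}_h]\!]\rangle_F .
\]
Since $\nabla\times\bm{u}\in H^1_0$, we have $[\![\nabla\times\bm{u}]\!]=0$ on every face, boundary faces included. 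Thus in $R_{\rm sym}$ and $R_{\rm pen}$ the jump $[\![\nabla\times\bm{v}_h]\!]$ may be replaced by $-[\![\nabla\times(\bm{u}-\bm{v}_h)]\!]$.

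\textbf{Step 3 (bounds).} The first term is controlled by Cauchy--Schwarz and $\|A\|_\infty$. The term $n_\#$ is controlled by the continuity theorem for $n_\#$, which gives $C\|\bm{u}-\bm{v}_h\|_{E_\#}\|\bm{w}_h\|_{E_h}$. For $R_{\rm pen}$, Cauchy--Schwarz with the $\tau/h_F$ weights gives the same bound, since these weighted jumps are part of both norms. For $R_{\rm sym}$, bound $h_F^{1/2}\|\{\!\{A(\nabla\times)^2\bm{w}_h\}\!\}\|_{L^2(F)}$ by $C\|A(\nabla\times)^2\bm{w}_h\|_{L^2(K)}$ using the discrete trace inequality (Lemma \ref{trace ineq}). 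Then pair with $h_F^{-1/2}\|\bm{n}_F\times[\![\nabla\times(\bm{u}-\bm{v}_h)]\!]\|_{L^2(F)}$, which appears in $\|\bm{u}-\bm{v}_h\|_{E_\#}$. Dividing by $\|\bm{w}_h\|_{E_h}$ gives the claim with $\omega_\#$ depending on $\|A\|_\infty$, $\tau$ and the constants above.

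\textbf{Main obstacle.} The delicate point is Step 2: making sure \eqref{bili id 1} reproduces exactly the consistency term, and that \eqref{bili id 2} applies to $\bm{u}$ in the present setting. The first requires the orientation factors $\epsilon_{K,F}$ and the averaging convention on boundary faces, where $\{\!\{\cdot\}\!\}$ is the one-sided value, to match. The second requires $\bm{u}\in V_s$ with the required $L^{q_3}$ regularity of $\nabla\times A(\nabla\times)^2\bm{u}$, which is Assumption \ref{assumption}. It also requires that $\|\cdot\|_{E_\#}$ is well defined on $\bm{u}-\bm{v}_h$, with the broken quantities taken cellwise, which holds because the mesh is subordinate to $\{\Omega_i\}$. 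If the boundary-face bookkeeping in \eqref{bili id 1} fails, I would first treat boundary faces separately using $\bm{n}\times\nabla\times\bm{u}=0$ on $\partial\Omega$.
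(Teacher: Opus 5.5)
Your proposal is correct and is essentially the paper's proof: you eliminate $a_h(\bm{u}_h,\bm{w}_h)$ via the discrete equation, rewrite $(\bm{f},\bm{w}_h)_\Omega$ through \eqref{bili id 2} for $\bm{u}$ and the consistency term of $a_h(\bm{v}_h,\bm{w}_h)$ through \eqref{bili id 1}, merge these into $n_\#(\bm{u}-\bm{v}_h,\bm{w}_h)$, use $[\![\nabla\times\bm{u}]\!]=0$ in the remaining face terms, and conclude by continuity of $n_\#$; your trace-inequality bound for $R_{\rm sym}$ just makes explicit what the paper leaves to ``the triangular inequality''. The boundary-face issue you flag is real: with the uniform factor $\tfrac{\epsilon_{K,F}}{2}$ in $n_\#$, both identities miss half of the boundary-face contribution, because $\{\!\{\cdot\}\!\}_F$ is one-sided there. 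The cure is to weight boundary faces by $\epsilon_{K,F}$ (factor $1$), which keeps both identities and the continuity bound; appealing to the boundary condition on $\bm{u}$ would not repair it, since $A(\nabla\times)^2\bm{u}$ satisfies no boundary condition.
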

\begin{proof}
    We choose $ \bm{w}_h \in V_h $ arbitrarily. First, notice that $ \bm{u} \in V_s $ enables \ref{bili id 2}, and $ (\nabla\times)^2 A(\nabla\times)^2 \bm{u} = \bm{f}-\nabla p $ in the $ [L^2(\Omega)]^3 $ sense. Thus,
    \begin{align*}
        & \sum_{K\in\mathcal{T}_h}(A(\nabla\times)^2 \bm{u}, (\nabla\times)^2 \bm{w}_h )_{K} -  \bm{n}_{\#}( \bm{u}, \bm{w}_h ) -(\bm{f}, \bm{w}_h )_\Omega + (\nabla p, \bm{w}_h )_\Omega\\
        & = ((\nabla\times)^2 A(\nabla\times)^2 \bm{u}, \bm{w}_h )_\Omega - (\bm{f}, \bm{w}_h )_\Omega + (\nabla p, \bm{w}_h )_\Omega\\
        & = 0.
    \end{align*}
    Nevertheless, \ref{ip mtd} derives
    \begin{align*}
        a_h( \bm{u}_h - \bm{v}_h , \bm{w}_h )=(\bm{f}, \bm{w}_h )_\Omega-(\nabla p_h, \bm{w}_h )_\Omega-a_h( \bm{v}_h , \bm{w}_h )
    \end{align*}
    Summing the two identities up and using \ref{bili id 1},
    \begin{align*}
        & a_h( \bm{u}_h - \bm{v}_h , \bm{w}_h ) + 0 \\
        & = (\nabla p-\nabla p_h, \bm{w}_h )_\Omega - a_h( \bm{v}_h , \bm{w}_h ) -  \bm{n}_{\#}(\bm{u}, \bm{w}_h ) + \sum_{K\in\mathcal{T}_h} (A(\nabla\times)^2 \bm{u},(\nabla\times)^2 \bm{w}_h )_{K} \\
        & = (\nabla p-\nabla p_h, \bm{w}_h )_\Omega- \bm{n}_{\#}( \bm{u} - \bm{v}_h , \bm{w}_h ) + \sum_{K\in\mathcal{T}_h}(A(\nabla\times)^2( \bm{u} - \bm{v}_h ),(\nabla\times)^2 \bm{w}_h )_{K}\\
        & \quad-\sum_{F\in\mathcal{F}_h}\langle  \bm{n}_F\times[\![\nabla\times( \bm{u} - \bm{v}_h ) ]\!],\{\!\{A(\nabla\times\nabla\times \bm{w}_h )\}\!\}\rangle_{F}\\
        & \quad +\sum_{F\in\mathcal{F}_h}\frac{\tau}{h_F}\langle  \bm{n}_F\times[\![\nabla\times( \bm{u} - \bm{v}_h ) ]\!], \bm{n}_F\times[\![\nabla\times \bm{w}_h ]\!]\rangle_{F}
    \end{align*}
    Thus, by the boundedness of $n_\#$ and the triangular inequality,
    \begin{equation*}
        |a_h( \bm{u}_h - \bm{v}_h , \bm{w}_h )|\le C(\| \bm{u} - \bm{v}_h \|_{E_\#}+\|p-p_h\|_{H^1_0(\Omega)})\| \bm{w}_h \|_{E_h}.
    \end{equation*}
\end{proof}
So far we have achieved estimates that are vital for applying the abstract framework. 
\begin{theorem}
    \begin{equation*}
        \begin{aligned}
            \| \bm{u} - \bm{u}_h \|_{E_\#}&\le C(\inf_{ \bm{v}_h \in V_h }\| \bm{u} - \bm{v}_h \|_{E_\#}+\|p-p_h\|_{H^1_0(\Omega)});\\
            \|p-p_h\|_{H^1_0(\Omega)}&\le C\inf_{q_h\in Q_h}\|p-q_h\|_{H^1_0(\Omega)}.
        \end{aligned}
    \end{equation*}
\end{theorem}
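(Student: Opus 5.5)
This estimate follows by applying the abstract result, Theorem \ref{abs quasi}, to the IP DG method \ref{ip mtd}. The abstract spaces are instantiated as $E = H_0(\text{curl}^2;\Omega)$, $Q = H^1_0(\Omega)$, $E_h$ and $Q_h$ as in the discrete formulation, and $V_s$ and $E_\# = V_s + E_h$ as in Definition \ref{aug norm}. Since $Q_h \subset H^1_0(\Omega)$ and $b_h(q_h,\bm{v}) = (\nabla q_h,\bm{v})_\Omega$ is meaningful for every $\bm{v}\in E_\#\subset [L^2(\Omega)]^3$, the compatibility requirement $b_h = b$ on $Q_h\times E_\#$ holds trivially. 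The weak solution lies in $V_s$ by Assumption \ref{assumption}. For the assumption $(\nabla\times)^2A(\nabla\times)^2\bm{u}\in L^2$ I will use the regularity theorem for \ref{weak form}, i.e. the identity $(\nabla\times)^2A(\nabla\times)^2\bm{u} = \bm{f}-\nabla p$. The divergence-free property comes from the second equation of \ref{weak form}.

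The four hypotheses are then checked in order. Items (a) and (b), the continuity and $V_h$-coercivity of $a_h$ for $\tau$ large and the boundedness and discrete inf-sup condition of $b_h$, were recorded at the start of this subsection as consequences of the proof of Theorem \ref{well-posed}. Item (c), $\|\bm{v}_h\|_{E_\#}\le c_\#\|\bm{v}_h\|_{E_h}$ on $V_h$, is the theorem established via Lemma \ref{inv ineq}. For item (d), note that $\delta_h(\bm{v}_h)$ is exactly the functional $\bm{w}_h\mapsto a_h(\bm{u}_h-\bm{v}_h,\bm{w}_h)$. Its $V_h^*$ norm is therefore bounded by $\omega_\#(\|\bm{u}-\bm{v}_h\|_{E_\#}+\|p-p_h\|_{H^1_0(\Omega)})$ by the last theorem, which used the identities \ref{bili id 1} and \ref{bili id 2} together with the boundedness of $n_\#$. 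Consequently $\omega_\#<\infty$ uniformly in $h$.

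Substituting these into Theorem \ref{abs quasi} gives the first inequality with $C = 1 + c_\#\omega_\#/\alpha$ and the second with $C = 1 + C_B/\beta$. The only point that needs care is that every constant $C_A,\alpha,C_B,\beta,c_\#,\omega_\#$ is independent of $h$. This holds because each was derived from shape regularity, quasi-uniformity, scaling arguments (Lemma \ref{inv ineq} and the lifting estimate \ref{lift2}) and the fixed parameters $s_2,q_3,\tau,A$.

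The pressure estimate also needs some attention. In the abstract proof the step $\|p_h-q_h\|_Q\le \beta^{-1}\sup_{\bm{w}_h} b_h(p_h-q_h,\bm{w}_h)/\|\bm{w}_h\|_{E_h}$ is followed by replacing $b_h(p_h,\bm{w}_h)$ with $b_h(p,\bm{w}_h)$. That replacement requires a Galerkin orthogonality for the pressure, which I would verify directly.

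First, test \ref{ip mtd} with the discrete gradients $\bm{w}_h=\nabla r_h\in E_h$, $r_h\in Q_h$. These are admissible because the degrees match: $Q_h$ uses $\mathcal{P}_{k+1}$ and $E_h$ uses $[\mathcal{P}_k]^3$. Since $\nabla\times\nabla r_h=0$, all curl terms in $a_h$ vanish and we get $(\nabla p_h,\nabla r_h)=(\bm{f},\nabla r_h)$.

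Second, test \ref{weak form} with $\bm{v}=\nabla r_h\in H_0(\text{curl}^2;\Omega)$. This gives $(\nabla p,\nabla r_h)=(\bm{f},\nabla r_h)$.

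Subtracting the two identities, $p_h$ is the $H^1_0$-orthogonal projection of $p$ onto $Q_h$. This yields the pressure bound with constant $1$, and in any case within the stated $C$. With this remark, the first estimate follows from the abstract theorem as described above.

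The main obstacle, in my view, is therefore not the assembly itself but confirming that this pressure orthogonality, and hence item (d), holds with the exact $p$. The $\|p-p_h\|$ term in (d) absorbs the coupling between velocity and pressure. I expect this to hold for the reasons just given.
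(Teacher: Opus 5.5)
Your proposal is correct. For the velocity bound it follows the paper exactly: the paper gives no separate proof, and simply feeds the verified items (a)--(d) into Theorem \ref{abs quasi}, as you do.

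Where you differ is the pressure bound. The paper relies on the abstract argument. Its last step bounds $b_h(p_h-q_h,\bm{w}_h)$ by $C_B\|p-q_h\|_Q\|\bm{w}_h\|_{E_h}$, which tacitly uses $b_h(p_h,\bm{w}_h)=b(p,\bm{w}_h)$ for every $\bm{w}_h\in E_h$. In this nonconforming setting the continuous problem does not supply that identity, and it fails in general for $\bm{w}_h\in V_h$: there $(\nabla p_h,\bm{w}_h)_\Omega=0$, while $(\nabla p,\bm{w}_h)_\Omega$ need not vanish.

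Your route avoids this. You test both the discrete and the continuous problem with $\nabla r_h$, $r_h\in Q_h$, which kills every curl term and makes $p_h$ the $H^1_0$-Ritz projection of $p$. This gives the second inequality directly with constant $1$. It also shows the pressure is decoupled from the velocity: both $p$ and $p_h$ are determined by $\bm{f}$ alone. Note that $\nabla r_h\in E_h$ holds because $r_h\in H^1_0(\Omega)$ gives $\nabla r_h\in H_0(\text{curl};\Omega)$, not only because the polynomial degrees match.

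Two small precision points.

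First, your orthogonality holds only against $\nabla Q_h$, so it does not justify the literal replacement for all $\bm{w}_h\in E_h$. It closes the estimate in one of two ways. Either use your direct Ritz argument, or keep the inf-sup route: split $\bm{w}_h=\bm{w}_h^0+\nabla s_h$ using $E_h=V_h\oplus\nabla Q_h$, and note that $b_h(p_h-q_h,\bm{w}_h^0)=0$.

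Second, item (d) does not depend on this orthogonality. The $\|p-p_h\|_{H^1_0(\Omega)}$ term there arises from Cauchy--Schwarz applied to $(\nabla(p-p_h),\bm{w}_h)_\Omega$. Your closing remark tying (d) to the pressure orthogonality can therefore be dropped.
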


\subsection{Polynomial Approximation}
To accomplish the ultimate estimates, we need to study the approximating property of the finite elements.
\begin{theorem}\label{Nedelec est 3}
    \begin{equation*}
    \begin{aligned}
        \inf_{ \bm{v}_h \in V_h }\| \bm{u} - \bm{v}_h \|_{E\#}\le& C\sum_{K\in\mathcal{T}_h}(h_K^{s_0}\| \bm{u} \|_{H^{s_0}(K)}+h_K^{s_1-1}\| \nabla \times \bm{u} \|_{H^{s_1}(K)}+h_K^{s_2}\| A(\nabla\times)^2 \bm{u} \|_{H^{s_2}(K)}\\
        &+h_K^{\frac{5}{2}-\frac{3}{q_3}}\|\nabla\times A(\nabla\times)^2 \bm{u}_h \|_{L^{q_3}(K)}+h_K^2\| (\nabla\times)^2 A(\nabla\times)^2 \bm{u} \|_{L^2(K)}).
    \end{aligned}
    \end{equation*}
\end{theorem}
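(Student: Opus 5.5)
The plan is to exhibit one explicit candidate $\bm{v}_h \in V_h$ and bound each component of $\|\bm{u}-\bm{v}_h\|_{E_\#}$ cell by cell. The natural first choice, $\Pi^{\text{curl}}_{h,k}\bm{u}$, need not be discretely divergence free. I would therefore correct it by a discrete gradient. Let $\phi_h \in Q_h$ solve $(\nabla \phi_h, \nabla q_h) = (\Pi^{\text{curl}}_{h,k}\bm{u}, \nabla q_h)$ for all $q_h \in Q_h$, and set $\bm{v}_h := \Pi^{\text{curl}}_{h,k}\bm{u} - \nabla\phi_h$. Since $\nabla Q_h \subset E_h$ (exact sequence for the second-kind N\'ed\'elec space with $Q_h$ of degree $k+1$), we get $\bm{v}_h \in V_h$. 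Moreover $\nabla\times\bm{v}_h = \nabla\times\Pi^{\text{curl}}_{h,k}\bm{u}$, so all curl-type components of the error equal those of the interpolant. The $L^2$ part is controlled by $\|\nabla\phi_h\|_{L^2} \le \|\Pi^{\text{curl}}_{h,k}\bm{u}-\bm{u}\|_{L^2}$, using $(\bm{u},\nabla q_h)=0$. This gives the $h_K^{s_0}\|\bm{u}\|_{H^{s_0}}$ term together with a higher-order curl term, via \eqref{Nedelec est 1}. Note that \eqref{Nedelec est 1} requires $s_0>\frac12$, which Assumption \ref{assumption} provides.

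Next I would bound the $E_h$ part. The curl error follows from \eqref{Nedelec est 1}, and the $(\nabla\times)^2$ error and the jump term $h_F^{-1/2}\|\bm{n}_F\times[\![\nabla\times(\bm{u}-\bm{v}_h)]\!]\|$ follow from Lemma \ref{Nedelec est 2}(1) with $t_1=s_1$. Here $[\![\nabla\times\bm{u}]\!]=0$, so the jump is that of the interpolation error. These contribute $h_K^{s_1-1}\|\nabla\times\bm{u}\|_{H^{s_1}(K)}$.

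The new part consists of the three augmented terms. For these I would use the splitting $A(\nabla\times)^2(\bm{u}-\bm{v}_h) = (A(\nabla\times)^2\bm{u} - \pi_K) + (\pi_K - A(\nabla\times)^2\bm{v}_h)$, where $\pi_K$ is the $L^2$ projection of $A(\nabla\times)^2\bm{u}$ onto polynomials of degree $k-1$ on $K$. The subordinate mesh assumption makes $A$ constant on $K$, so $A(\nabla\times)^2\bm{v}_h|_K$ is a polynomial. The first piece gives:
\begin{itemize}
\item $h_K^{s_2}|A(\nabla\times)^2\bm{u}-\pi_K|_{H^{s_2}} \le C h_K^{s_2}|A(\nabla\times)^2\bm{u}|_{H^{s_2}}$, by $H^{s_2}$-stability of the $L^2$ projection;
\item $\nabla\times$ of it, bounded in $L^{q_3}$ by the data term $\|\nabla\times A(\nabla\times)^2\bm{u}\|_{L^{q_3}}$ plus an inverse estimate on $\nabla\times\pi_K$;
\item the $(\nabla\times)^2$ term, bounded by the data term plus $h_K^2\|(\nabla\times)^2\pi_K\|$.
\end{itemize}
The second piece is a polynomial, so Lemma \ref{inv ineq} (with $p=q_3$, $q=2$ for the middle term) reduces all three augmented seminorms to $C\|\pi_K - A(\nabla\times)^2\bm{v}_h\|_{L^2(K)}$. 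This in turn is at most $\|A(\nabla\times)^2\bm{u}-\pi_K\|_{L^2(K)} + \|A\|_\infty\|(\nabla\times)^2(\bm{u}-\bm{v}_h)\|_{L^2(K)}$, which is already bounded by $h_K^{s_2}|A(\nabla\times)^2\bm{u}|_{H^{s_2}(K)} + h_K^{s_1-1}\|\nabla\times\bm{u}\|_{H^{s_1}(K)}$. The inverse-estimate terms on $\pi_K$ are handled the same way, since $\pi_K$ is itself a polynomial close in $L^2$ to the data. The scaling exponents match by construction of the $E_\#$ weights: $h_K^{5/2-3/q_3}\cdot h_K^{-1-3(1/2-1/q_3)} = h_K^0$.

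The main obstacle is the augmented curl-type terms. For $\|\nabla\times(A(\nabla\times)^2\bm{u}-\pi_K)\|_{L^{q_3}}$ I do not want to invoke the inverse estimate on $\pi_K$ naively. If that step fails, I would instead take $\pi_K$ as an averaged Taylor polynomial (Bramble--Hilbert in fractional and $L^{q_3}$ form on $K$). This yields simultaneous bounds of the difference in $L^2$ scaled by $h_K^{s_2}$, and of its curl in $L^{q_3}$ by the data, without losing powers of $h$.
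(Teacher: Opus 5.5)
Your proposal is correct and shares the paper's skeleton. Your $\bm v_h$ is exactly the paper's $\Pi^E_h\bm u$: your $\phi_h$ equals $-\sigma_h$ because $(\bm u,\nabla q_h)=0$. The $E_h$ part comes from the same N\'ed\'elec estimates. The three augmented terms are handled the same way: write $w:=A(\nabla\times)^2\bm u$ as data minus a local polynomial projection, plus a polynomial remainder that Lemma \ref{inv ineq} reduces to an $L^2$ norm.

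Where you differ is in how the remainder is bounded. The paper runs through the chain $\Pi^{L^2}_0,\Pi^{L^2}_{k-2},\Pi^{L^2}_{k-1},\Pi^{BDM}_{k-1}$ and the commuting property $\nabla\times\Pi^{\text{curl}}_k=\Pi^{BDM}_{k-1}\nabla\times$. You simply use $\|\pi_K-A(\nabla\times)^2\bm v_h\|_{L^2(K)}\le\|w-\pi_K\|_{L^2(K)}+\|A\|_{L^\infty}\|(\nabla\times)^2(\bm u-\bm v_h)\|_{L^2(K)}$ together with Lemma \ref{Nedelec est 2}(1). That is shorter and needs no BDM commuting diagram. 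It also sidesteps the $I_3$ step of the paper's $H^{s_2}$ estimate, where a factor $h_K^{-1}$ is extracted from the non-polynomial $(I-\Pi^{L^2}_{k-1})\nabla\times\bm u$. That bound is true in the end, but only via the $H^1$-seminorm approximation property of the $L^2$ projection, not via an inverse inequality.

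The one step to repair is the one you flagged. As written, ``the inverse-estimate terms on $\pi_K$ are handled the same way, since $\pi_K$ is close in $L^2$ to the data'' does not work. An inverse inequality applied to $\pi_K$ itself returns $\|\pi_K\|_{L^2(K)}$ with no gain in $h_K$, and closeness to the non-polynomial $w$ cannot be inserted into an inverse inequality.

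The fix needs no averaged Taylor polynomials. Curl, double curl and the $H^{s_2}$ seminorm all annihilate constants. Let $\overline{w}_K$ be the mean of $w$ on $K$, so that $\pi_K-\overline{w}_K=\Pi^{L^2}_{k-1}(w-\overline{w}_K)$. Then
\[
h_K^{\frac{5}{2}-\frac{3}{q_3}}\|\nabla\times\pi_K\|_{L^{q_3}(K)}=h_K^{\frac{5}{2}-\frac{3}{q_3}}\|\nabla\times(\pi_K-\overline{w}_K)\|_{L^{q_3}(K)}\le C\|\pi_K-\overline{w}_K\|_{L^2(K)}\le C\|w-\overline{w}_K\|_{L^2(K)}\le Ch_K^{s_2}|w|_{H^{s_2}(K)}.
\]
The same argument gives the double-curl bound and the $H^{s_2}$-seminorm stability of $\pi_K$ that you asserted.

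Simpler still, take $\pi_K=\overline{w}_K=\Pi^{L^2}_0w$ from the start, as the paper does in its $I_1$. The data piece then reproduces exactly the three data terms on the right-hand side, and your one-line remainder bound finishes the proof.
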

\begin{proof}
    To raise the approximation in $ V_h $, we need to implement a variety of the second N\'{e}d\'{e}lec interpolation, $\Pi^{\text{curl}}_{h,k}$, so that it falls in $ V_h $. We apply $ \Pi^E_h \bm{u} $ as raised in (3.4), \cite{chen2021analysis}:
    \[\Pi^E_h \bm{u} =\Pi^{\text{curl}}_{h,k} \bm{u} +\nabla\sigma_h\]
    where $\sigma_h\in Q_h$ such that 
    \[(\nabla\sigma_h,\nabla q_h)=(\bm{u} -\Pi^{\text{curl}}_{h,k} \bm{u} ,\nabla q_h),\forall q_h\in Q_h.\]
    Lemma 3.1, (5.9), and (5.15) of \cite{chen2021analysis} show that
    \begin{equation*}
        \| \bm{u} -\Pi^E_h \bm{u}\|_{E_h}\le C(h^{s_0}\| \bm{u} \|_{H^{s_0}(\Omega)} + h^{s_1-1}\|\nabla \times \bm{u} \|_{H^{s_1}(\Omega)}),
    \end{equation*} 
    remaining the following parts:
    \begin{enumerate}
        \item $\sum_{K\in\mathcal{T}_h}h_K^{s_2}|A (\nabla\times)^2 (\bm{u} -\Pi^{\text{curl}}_{h,k} \bm{u} )|_{H^{s_2}(K)}$;
        \item $\sum_{K\in\mathcal{T}_h}h_K^{\frac{5}{2}-\frac{3}{q_3}}\|\nabla\times A(\nabla\times)^2 (\bm{u} -\Pi^{\text{curl}}_{h,k} \bm{u} ))\|_{L^{q_3}(K)}$;
        \item $\sum_{K\in\mathcal{T}_h}h_K^{2}\|(\nabla\times)^2 A (\nabla\times)^2 (\bm{u} -\Pi^{\text{curl}}_{h,k} \bm{u} )\|_{L^2(K)}$.
    \end{enumerate}
    We shall control them in a routine manner: establish estimates element-wisely, and sum them up. Notice that the error estimate derived in this form will gain the local property as we promised.
    \begin{enumerate}
        \item \textit{Estimation of }$h_K^{s_2}|A\nabla\times\nabla\times(\bm{u} -\Pi^{\text{curl}}_{h,k} \bm{u} )|_{H^{s_2}(K)}$. \par
        As cited before, $\nabla\times\Pi^{\text{curl}}_{k} \bm{u} = \Pi^{BDM}_{k-1}(\nabla\times \bm{u})$. Hence
        \begin{align*}
            &h_K^{s_2}|A(\nabla\times\nabla\times( \bm{u} -\Pi^{\text{curl}}_{k} \bm{u}))|_{H^{s_2}(K)}\\
            &= h_K^{s_2}|A(\nabla\times(I-\Pi^{BDM}_{k-1})(\nabla\times \bm{u} )|_{H^{s_2}(K)}\\
            &\le h_K^{s_2}|(I-\Pi^{L^2}_{0})A(\nabla\times\nabla\times \bm{u} )|_{H^{s_2}(K)}+h_K^{s_2}|(\Pi^{L^2}_{0}-\Pi^{L^2}_{k-2})A(\nabla\times\nabla\times \bm{u} )|_{H^{s_2}(K)}\\
            &\quad +h_K^{s_2}|(\Pi^{L^2}_{k-2}A(\nabla\times\nabla\times \bm{u} )-A(\nabla\times\Pi^{L^2}_{k-1}(\nabla\times \bm{u} ))|_{H^{s_2}(K)}\\
            &\quad+h_K^{s_2}|A(\nabla\times(\Pi^{L^2}_{k-1}-\Pi^{BDM}_{k-1})(\nabla\times \bm{u} ))|_{H^{s_2}(K)}\\
            &:=I_1+I_2+I_3+I_4
        \end{align*}
        where $\Pi^{L^2}_{l}$ are the $L^2$ projections to $\mathcal{P}_{l}(K)$, $l=0,k-2,k-1$.\par
        First, since $\mathcal{P}_0$ are the constant functions on the element $K$, their $s_2$ semi-norms are all 0, and
        \begin{equation*}
            I_1=h_K^{s_2}|A(\nabla\times\nabla\times \bm{u} )|_{H^{s_2}(K)}.
        \end{equation*}
        \par
        Next, since $\mathcal{P}_0\subset\mathcal{P}_{k-1}$, the combination of the construction of $\Pi^{L^2}_{k-1}$, the discrete inverse inequality, and the Bramble-Hilbert lemma derives
        \begin{align*}
            I_2&\le C\|(\Pi^{L^2}_{0}-\Pi^{L^2}_{k-2})A(\nabla\times\nabla\times \bm{u} )\|_{L^2(K)}\\
            &=C\|\Pi^{L^2}_{k-2}\circ(\Pi^{L^2}_{0}-I)A(\nabla\times\nabla\times \bm{u} )\|_{L^2(K)}\\
            &\le C\|(\Pi^{L^2}_{0}-I)A(\nabla\times\nabla\times \bm{u} )\|_{L^2(K)}\\
            &\le Ch_K^{s_2}|A(\nabla\times\nabla\times \bm{u} )|_{H^{s_2}(K)}.        
        \end{align*}
        Then, we notice that $A\nabla\times\Pi^{L^2}_{k-1}(\nabla\times \bm{u} )\in\mathcal{P}_{k-2}(K)$, and carry the same technique:
        \begin{align*}
            I_3&\le C\|\Pi^{L^2}_{k-2}A(\nabla\times\nabla\times \bm{u} )-A(\nabla\times\Pi^{L^2}_{k-1}(\nabla\times \bm{u} ))\|_{L^2(K)}\\
            &=C\|\Pi^{L^2}_{k-2}A(\nabla\times(I-\Pi^{L^2}_{k-1})(\nabla\times \bm{u} ))\|_{L^2(K)}\\
            &\le C\|A(\nabla\times(I-\Pi^{L^2}_{k-1})(\nabla\times \bm{u} ))\|_{L^2(K)}\\
            &\le Ch_K^{-1}\|(I-\Pi^{L^2}_{k-1})(\nabla\times \bm{u} )\|_{L^2(K)}\\
            &\le Ch_K^{s_1-1}|\nabla\times \bm{u} |_{H^{s_1}(K)}.        
        \end{align*}
        Last, we apply \ref{RT est}:
        \begin{align*}
            I_4&\le C h_K^{-1}\|(\Pi^{L^2}_{k-1}-\Pi^{BDM}_{k-1})(\nabla\times \bm{u} )\|_{L^2(K)}\\
            &=h_K^{-1}\|\Pi^{L^2}_{k-1}(I-\Pi^{BDM}_{k-1})(\nabla\times \bm{u} )\|_{L^2(K)}\\
            &\le Ch_K^{-1}\|(I-\Pi^{BDM}_{k-1})(\nabla\times \bm{u} )\|_{L^2(K)}\\
            &\le Ch_K^{s_1-1}|(\nabla\times \bm{u} )|_{H^{s_1}(K)}.     
        \end{align*}
        Consequentially,
        \begin{equation*}
            h_K^{s_2}|A\nabla\times\nabla\times(\bm{u} -\Pi^{\text{curl}}_{h,k} \bm{u} )|_{H^{s_2}(K)}\le C(h_K^{s_1-1}|\nabla \times u|_{H^{s_1}(K)}+h_K^{s_2}|A(\nabla\times\nabla\times \bm{u} )|_{H^{s_2}(K)}).
        \end{equation*}
        \item \textit{Estimation of }$h_K^{\frac{5}{2}-\frac{3}{q_3}}\|\nabla\times A(\nabla\times\nabla\times (\bm{u} -\Pi^{\text{curl}}_{h,k} \bm{u} ))\|_{L^{q_3}(K)}$. Note that
        \begin{align*}
            &h_K^{\frac{5}{2}-\frac{3}{q_3}}\|\nabla\times A(\nabla\times\nabla\times (\bm{u} -\Pi^{\text{curl}}_{h,k} \bm{u} ))\|_{L^{q_3}(K)}\\
            &\le h_K^{\frac{5}{2}-\frac{3}{q_3}}\|\nabla\times(I-\Pi^{L^2}_{0}) A(\nabla\times\nabla\times \bm{u} )\|_{L^{q_3}(K)}\\
            &\quad+h_K^{\frac{5}{2}-\frac{3}{q_3}}\|\nabla\times(\Pi^{L^2}_{0}-\Pi^{L^2}_{k-2})A(\nabla\times\nabla\times \bm{u} )\|_{L^{q_3}(K)}\\
            &\quad+h_K^{\frac{5}{2}-\frac{3}{q_3}}\|\nabla\times \Pi^{L^2}_{k-2}(A(\nabla\times\nabla\times \bm{u} ))-\nabla\times A(\nabla\times\Pi^{BDM}_{k-1}(\nabla\times \bm{u} ))\|_{L^{q_3}(K)}\\
            &=I_1+I_2+I_3.
        \end{align*}
        By the same strategy as before,
        \begin{equation*}
            I_1=h_K^{\frac{5}{2}-\frac{3}{q_3}}\|\nabla\times A(\nabla\times\nabla\times \bm{u} )\|_{L^{q_3}(K)};
        \end{equation*}
        \begin{align*}
            I_2&\le Ch_K^{\frac{5}{2}-\frac{3}{q_3}}h_K^{-1+\frac{3}{q_3}-\frac{3}{2}}\|(\Pi^{L^2}_{0}-\Pi^{L^2}_{k-2})A(\nabla\times\nabla\times \bm{u} )\|_{L^{2}(K)}\\
            &=C\|\Pi^{L^2}_{k-2}(\Pi^{L^2}_{0}-I)A(\nabla\times\nabla\times \bm{u} )\|_{L^{2}(K)}\\
            &\le C\|(\Pi^{L^2}_{0}-I)A(\nabla\times\nabla\times \bm{u} )\|_{L^{2}(K)}\\
            &\le Ch_K^{s_2}|A(\nabla\times\nabla\times \bm{u} )|_{H^{s_2}(K)};
        \end{align*}
        \begin{align*}
            I_3\le & C\|\Pi^{L^2}_{k-2}(A(\nabla\times\nabla\times \bm{u} ))-A(\nabla\times\Pi^{BDM}_{k-1}(\nabla\times \bm{u} ))\|_{L^{2}(K)}\\
            \le & Ch_K^{s_1-1}|\nabla\times \bm{u} |_{H^{s_1}(K)}.
        \end{align*}
        Consequentially,
        \begin{equation*}
        \begin{aligned}
            & h_K^{\frac{5}{2}-\frac{3}{q_3}}\|\nabla\times A(\nabla\times\nabla\times (\bm{u} -\Pi^{\text{curl}}_{h,k} \bm{u} ))\|_{L^{q_3}(K)}\\
            &\le C (h_K^{s_1-1}|\nabla\times \bm{u} |_{H^{s_1}(K)}+h_K^{s_2}|A(\nabla\times\nabla\times \bm{u} )|_{H^{s_2}(K)}+h_K^{\frac{5}{2}-\frac{3}{q_3}}\|\nabla\times A(\nabla\times\nabla\times \bm{u} )\|_{L^{q_3}(K)}.
        \end{aligned}
        \end{equation*}
        \item \textit{Estimation of }$ h_K^{2}\|\nabla\times\nabla\times A\nabla\times\nabla\times(\bm{u} -\Pi^{\text{curl}}_{h,k} \bm{u} )\|_{L^2(K)}$. We have 
        \begin{align*}
            &h_K^{2}\|\nabla\times\nabla\times A(\nabla\times\nabla\times (\bm{u} -\Pi^{\text{curl}}_{h,k} \bm{u} ))\|_{L^{2}(K)}\\
            &\le h_K^{2}\|\nabla\times\nabla\times(I-\Pi^{L^2}_{0}) A(\nabla\times\nabla\times \bm{u} )\|_{L^{2}(K)}\\
            &\quad+h_K^{2}\|\nabla\times\nabla\times(\Pi^{L^2}_{0}-\Pi^{L^2}_{k-2})A(\nabla\times\nabla\times \bm{u} )\|_{L^{2}(K)}\\
            &\quad+h_K^{2}\|\nabla\times\nabla\times \Pi^{L^2}_{k-2}(A(\nabla\times\nabla\times \bm{u} ))-\nabla\times\nabla\times A(\nabla\times\Pi^{BDM}_{k-1}(\nabla\times \bm{u} ))\|_{L^{2}(K)}\\
            &=I_1+I_2+I_3.
        \end{align*}
        Similarly,
        \begin{equation*}
            I_1=h_K^{2}\|\nabla\times\nabla\times A(\nabla\times\nabla\times \bm{u} )\|_{L^{2}(K)};
        \end{equation*}
        \begin{align*}
            I_2&\le C\|(\Pi^{L^2}_{0}-\Pi^{L^2}_{k-2})A(\nabla\times\nabla\times \bm{u} )\|_{L^{2}(K)}\\
            &\le Ch_K^{s_2}|A(\nabla\times\nabla\times \bm{u} )|_{H^{s_2}(K)};
        \end{align*}
        \begin{align*}
            I_3\le & C\|\Pi^{L^2}_{k-2}(A(\nabla\times\nabla\times \bm{u} ))-A(\nabla\times\Pi^{BDM}_{k-1}(\nabla\times \bm{u} ))\|_{L^{2}(K)}\\
            \le & Ch_K^{s_1-1}|\nabla\times \bm{u} |_{H^{s_1}(K)}.
        \end{align*}
        Consequentially,
        \begin{equation*}
        \begin{aligned}
            & h_K^{2}\|\nabla\times\nabla\times A(\nabla\times\nabla\times (\bm{u} -\Pi^{\text{curl}}_{h,k} \bm{u} ))\|_{L^{2}(K)}\\
            &\le C (h_K^{s_1-1}|\nabla\times \bm{u} |_{H^{s_1}(K)}+h_K^{s_2}|A(\nabla\times\nabla\times \bm{u} )|_{H^{s_2}(K)}+h_K^{2}\|\nabla\times\nabla\times A(\nabla\times\nabla\times \bm{u} )\|_{L^{2}(K)}.
        \end{aligned}
        \end{equation*}
    \end{enumerate}    
\end{proof}
\begin{theorem} 
    \begin{equation*}
        \begin{aligned}
            &\| \bm{u}- \bm{u}_h \|_{E_\#}+\|p-p_h\|_{H^1_0(\Omega)}\\
            &\le C\sum_{K\in\mathcal{T}_h}(h_K^{s_0}\|\bm{u}\|_{H^{s_0}(K)}+h_K^{s_1-1}\|\nabla \times \bm{u}\|_{H^{s_1}(K)} + h_K^{s_2}\|A(\nabla\times\nabla\times \bm{u} )\|_{H^{s_2}(K)}\\
            & + h_K^{\frac{5}{2}-\frac{3}{q_3}}\|\nabla\times A(\nabla\times\nabla\times \bm{u} )\|_{L^{q_3}(K)} + h_K^2\|\nabla\times\nabla\times A(\nabla\times\nabla\times \bm{u} ))\|_{L^2(K)}\\
            & + h_K^{s_p}\|p\|_{H^{s_p}(K)}).
        \end{aligned}
    \end{equation*}
\end{theorem}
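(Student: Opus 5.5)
The plan is to combine the two ingredients already established. The first is the quasi-optimal estimate of the preceding theorem, obtained from Theorem \ref{abs quasi} once $c_\#$ and $\omega_\#$ were bounded. The second is the approximation result Theorem \ref{Nedelec est 3}.

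First, insert the second line of the quasi-optimal theorem into the first. This gives
\begin{equation*}
\| \bm{u}-\bm{u}_h\|_{E_\#}+\|p-p_h\|_{H^1_0(\Omega)} \le C\Big(\inf_{\bm{v}_h\in V_h}\|\bm{u}-\bm{v}_h\|_{E_\#}+\inf_{q_h\in Q_h}\|p-q_h\|_{H^1_0(\Omega)}\Big).
\end{equation*}
For the first infimum, I would apply Theorem \ref{Nedelec est 3} directly, with the competitor $\Pi^E_h\bm{u}$. This already produces the five $\bm{u}$-dependent cellwise terms of the statement, where the $\bm{u}_h$ appearing in the $L^{q_3}$ term of Theorem \ref{Nedelec est 3} is read as $\bm{u}$. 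The only thing to verify is that each term is controlled by the corresponding term in the target sum. This holds because every local bound in that proof is elementwise, and seminorms are dominated by full norms.

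Second, I would treat the pressure infimum. I would observe that $p_h$ is in fact the $H^1_0$-Galerkin (Ritz) projection of $p$ onto $Q_h$. Indeed, $\nabla q_h\in E_h$ for every $q_h\in Q_h$. Moreover $\nabla\times\nabla q_h=0$ and all tangential jumps of $\nabla\times\nabla q_h$ vanish, so $a_h(\bm{u}_h,\nabla q_h)=0$. Testing \ref{ip mtd} with $\bm{v}_h=\nabla q_h$ and the weak form \ref{weak form} with $\bm{v}=\nabla q$ therefore gives Galerkin orthogonality $(\nabla(p-p_h),\nabla q_h)=0$. I would then choose $q_h$ as a Scott--Zhang quasi-interpolant of $p$ in $\mathcal{P}_{k+1}(\mathcal{T}_h)\cap H^1_0(\Omega)$ and sum the local estimates over $K$. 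Since $\mathcal{T}_h$ is subordinate to $\{\Omega_i\}$ in this section and the indices $s^i_p$ are uniformly bounded, the exponents may be taken cellwise.

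The main obstacle is exactly the pressure exponent. A standard interpolation bound in the $H^1$-seminorm yields $h_K^{s_p-1}\|p\|_{H^{s_p}(\omega_K)}$, whereas the statement asks for $h_K^{s_p}$. To reach the stated power I would read the index in Assumption \ref{assumption} the way it actually enters the error, namely through $\nabla p$. The saddle-point error sees $p$ only via $(\nabla(p-p_h),\bm{w}_h)$ in the estimate of $\omega_\#$. I would therefore approximate $\nabla p$ by $\nabla q_h$ using the estimate $\|\nabla(p-q_h)\|_{L^2(K)}\le Ch_K^{s_p}|\nabla p|_{H^{s_p}(\omega_K)}$, and attempt to identify $|\nabla p|_{H^{s_p}}$ with the regularity measured by $\|p\|_{H^{s_p}(K)}$ in the statement.

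This identification is the step I am least sure holds as worded. If it fails, the conclusion I can actually secure is the bound with $h_K^{s_p-1}$, with the rest of the argument unchanged. The final step is to collect the $\bm{u}$-terms from Theorem \ref{Nedelec est 3} and the pressure term into one cellwise sum.
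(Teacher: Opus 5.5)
Your skeleton is what the paper intends. The paper gives no separate proof of this theorem; it is meant to follow from three ingredients:
\begin{itemize}
\item substituting the pressure line of the quasi-optimal estimate into the velocity line;
\item Theorem \ref{Nedelec est 3} with the competitor $\Pi^E_h\bm{u}$;
\item a standard approximation bound in $Q_h$.
\end{itemize}
Your Ritz-projection observation is correct. Indeed $a_h(\bm{u}_h,\nabla q_h)=0$ because every term contains $\nabla\times\nabla q_h=0$, and $\nabla q\in H_0(\text{curl}^2;\Omega)$ for $q\in H^1_0(\Omega)$. It even gives $\|p-p_h\|_{H^1_0(\Omega)}=\inf_{q_h\in Q_h}\|p-q_h\|_{H^1_0(\Omega)}$.

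The step that fails is the one you flagged. $|\nabla p|_{H^{s_p}}$ is part of $\|p\|_{H^{s_p+1}}$, so identifying it with $\|p\|_{H^{s_p}(K)}$ is not a reading of Assumption \ref{assumption}. It replaces the hypothesis $p\in H^{s_p}$ by $p\in H^{s_p+1}$.

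No other argument can repair this, because the pressure term as printed is false. Here is a counterexample.
\begin{itemize}
\item Fix $K_0\in\mathcal{T}_h$ and let $\phi:=\hat\psi\circ T_{K_0}^{-1}$ with $0\neq\hat\psi\in C^\infty_c(\text{int}(\hat K))$. Take $\bm{f}=\nabla\phi$.
\item Then $(\bm{u},p)=(\bm{0},\phi)$ solves \eqref{weak form}, and Assumption \ref{assumption} holds with $s_p=1$. Every $\bm{u}$-term on the right-hand side vanishes, and only $K_0$ contributes to the pressure sum.
\item The statement would therefore imply $\|\phi-p_h\|_{H^1_0(\Omega)}\le C h_{K_0}\|\phi\|_{H^1(K_0)}$. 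By the Friedrichs inequality on $K_0$, this is at most $C' h_{K_0}|\phi|_{H^1(\Omega)}$.
\item On the other hand, your Galerkin orthogonality gives $\|\phi-p_h\|_{H^1_0(\Omega)}\ge\inf_{r\in\mathcal{P}_{k+1}(K_0)}|\phi-r|_{H^1(K_0)}\ge c\,|\phi|_{H^1(\Omega)}$.
\item Here $c>0$ is independent of $h$ by affine scaling and shape regularity, because the fixed function $\hat\psi$ is not a polynomial.
\end{itemize}
Letting $h\to 0$ gives a contradiction. The same scaling works for every $s_p$: the left side is of order $h^{1/2}$ and the right side of order $h^{3/2}$.

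So the exponent in the statement is off by one, and so is the paper's implicit use of $\inf_{q_h}\|p-q_h\|_{H^1_0(\Omega)}\lesssim\sum_K h_K^{s_p}\|p\|_{H^{s_p}(K)}$. Your fallback, with $h_K^{s_p-1}\|p\|_{H^{s_p}(K)}$ for $1\le s_p\le k+2$, is the correct and provable form. The rest of your argument goes through unchanged.
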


\section*{Acknowledgment}
The author would like to appreciate Prof. Weifeng QIU for providing invaluable guidance during the preparation of this paper, especially his abundant knowledge, profound insight and innovative thinking on this research topic.

\bibliographystyle{plain}
\bibliography{reference}

@book{grisvard2011elliptic,
  title={Elliptic problems in nonsmooth domains},
  author={Grisvard, Pierre},
  year={2011},
  publisher={SIAM}
}

@article{ern2022quasi,
  title={Quasi-optimal nonconforming approximation of elliptic PDEs with contrasted coefficients and ${H^{1+ r}}$, $r>0$, regularity},
  author={Ern, Alexandre and Guermond, Jean-Luc},
  journal={Foundations of Computational Mathematics},
  volume={22},
  number={5},
  pages={1273--1308},
  year={2022},
  publisher={Springer}
}

@article{chen2021analysis,
  title={Analysis of an interior penalty DG method for the quad-curl problem},
  author={Chen, Gang and Qiu, Weifeng and Xu, Liwei},
  journal={IMA Journal of Numerical Analysis},
  volume={41},
  number={4},
  pages={2990--3023},
  year={2021},
  publisher={Oxford University Press}
}

@article{qiu2023enriched,
  title={An enriched Ciarlet-Raviart scheme for the biharmonic equation},
  author={Qiu, Weifeng},
  journal={Communications on Analysis and Computation},
  volume={1},
  number={1},
  pages={1--11},
  year={2023},
  publisher={Communications on Analysis and Computation}
}

@article{ern2017finite,
  title={Finite element quasi-interpolation and best approximation},
  author={Ern, Alexandre and Guermond, Jean-Luc},
  journal={ESAIM: Mathematical Modelling and Numerical Analysis},
  volume={51},
  number={4},
  pages={1367--1385},
  year={2017},
  publisher={EDP Sciences}
}

@article{nedelec1980mixed,
  title={Mixed finite elements in $\mathbb{R}^3$},
  author={N{\'e}d{\'e}lec, Jean-Claude},
  journal={Numerische Mathematik},
  volume={35},
  number={3},
  pages={315--341},
  year={1980},
  publisher={Springer}
}

@article{nedelec1986new,
  title={A new family of mixed finite elements in $\mathbb{R}^3$},
  author={N{\'e}d{\'e}lec, Jean-Claude},
  journal={Numerische Mathematik},
  volume={50},
  number={1},
  pages={57--81},
  year={1986},
  publisher={Springer}
}

@book{monk2003finite,
  title={Finite element methods for Maxwell's equations},
  author={Monk, Peter},
  year={2003},
  publisher={Oxford university press}
}

@article{alonso1999optimal,
  title={An optimal domain decomposition preconditioner for low-frequency time-harmonic Maxwell equations},
  author={Alonso, Ana and Valli, Alberto},
  journal={Mathematics of Computation},
  volume={68},
  number={226},
  pages={607--631},
  year={1999}
}

@book{boffi2013mixed,
  title={Mixed finite element methods and applications},
  author={Boffi, Daniele and Brezzi, Franco and Fortin, Michel and others},
  volume={44},
  year={2013},
  publisher={Springer}
}

@article{dong2022hybrid,
  title={Hybrid high-order and weak Galerkin methods for the biharmonic problem},
  author={Dong, Zhaonan and Ern, Alexandre},
  journal={SIAM Journal on Numerical Analysis},
  volume={60},
  number={5},
  pages={2626--2656},
  year={2022},
  publisher={SIAM}
}

@article{dong2024c,
  title={${C^0}$-hybrid high-order methods for biharmonic problems},
  author={Dong, Zhaonan and Ern, Alexandre},
  journal={IMA Journal of Numerical Analysis},
  volume={44},
  number={1},
  pages={24--57},
  year={2024},
  publisher={Oxford University Press}
}

@article{monk2012finite,
  title={Finite element methods for Maxwell's transmission eigenvalues},
  author={Monk, Peter and Sun, Jiguang},
  journal={SIAM Journal on Scientific Computing},
  volume={34},
  number={3},
  pages={B247--B264},
  year={2012},
  publisher={SIAM}
}

@article{haddar2004interior,
  title={The interior transmission problem for anisotropic Maxwell's equations and its applications to the inverse problem},
  author={Haddar, Houssem},
  journal={Mathematical methods in the applied sciences},
  volume={27},
  number={18},
  pages={2111--2129},
  year={2004},
  publisher={Wiley Online Library}
}

@article{hiptmair2002finite,
  title={Finite elements in computational electromagnetism},
  author={Hiptmair, Ralf},
  journal={Acta Numerica},
  volume={11},
  pages={237--339},
  year={2002},
  publisher={Cambridge University Press}
}

@article{weber1980local,
  title={A local compactness theorem for Maxwell's equations},
  author={Weber, Ch and Werner, P},
  journal={Mathematical Methods in the Applied Sciences},
  volume={2},
  number={1},
  pages={12--25},
  year={1980},
  publisher={Wiley Online Library}
}

@article{zheng2011nonconforming,
  title={A nonconforming finite element method for fourth order curl equations in $\mathbb{R}^3$},
  author={Zheng, Bin and Hu, Qiya and Xu, Jinchao},
  journal={Mathematics of computation},
  volume={80},
  number={276},
  pages={1871--1886},
  year={2011}
}

@article{nicaise2018singularities,
  title={Singularities of the quad curl problem},
  author={Nicaise, Serge},
  journal={Journal of Differential Equations},
  volume={264},
  number={8},
  pages={5025--5069},
  year={2018},
  publisher={Elsevier}
}

@article{guermond2003mixed,
  title={Mixed finite element approximation of an MHD problem involving conducting and insulating regions: the 3D case},
  author={Guermond, JL and Minev, PD},
  journal={Numerical Methods for Partial Differential Equations: An International Journal},
  volume={19},
  number={6},
  pages={709--731},
  year={2003},
  publisher={Wiley Online Library}
}

@article{costabel1991coercive,
  title={A coercive bilinear form for Maxwell's equations},
  author={Costabel, Martin},
  journal={Journal of mathematical analysis and applications},
  volume={157},
  number={2},
  pages={527--541},
  year={1991},
  publisher={Elsevier}
}

@article{zhang2009family,
  title={A family of 3D continuously differentiable finite elements on tetrahedral grids},
  author={Zhang, Shangyou},
  journal={Applied Numerical Mathematics},
  volume={59},
  number={1},
  pages={219--233},
  year={2009},
  publisher={Elsevier}
}

@article{zhang2019h,
  title={${H (curl^{2})}$-conforming finite elements in 2 dimensions and applications to the quad-curl problem},
  author={Zhang, Qian and Wang, Lixiu and Zhang, Zhimin},
  journal={SIAM Journal on Scientific Computing},
  volume={41},
  number={3},
  pages={A1527--A1547},
  year={2019},
  publisher={SIAM}
}

@article{hong2012discontinuous,
  title={A discontinuous Galerkin method for the fourth-order curl problem},
  author={Hong, Qingguo and Hu, Jun and Shu, Shi and Xu, Jinchao},
  journal={Journal of Computational Mathematics},
  pages={565--578},
  year={2012},
  publisher={JSTOR}
}

@article{sun2016mixed,
  title={A mixed FEM for the quad-curl eigenvalue problem},
  author={Sun, Jiguang},
  journal={Numerische Mathematik},
  volume={132},
  number={1},
  pages={185--200},
  year={2016},
  publisher={Springer}
}

@article{zhang2020family,
  title={A family of curl-curl conforming finite elements on tetrahedral meshes},
  author={Zhang, Qian and Zhang, Zhimin},
  journal={CSIAM Trans. Appl. Math},
  volume={1},
  number={4},
  pages={639--663},
  year={2020}
}

@article{veeser2018quasiI,
  title={Quasi-optimal nonconforming methods for symmetric elliptic problems. I---Abstract theory},
  author={Veeser, Andreas and Zanotti, Pietro},
  journal={SIAM Journal on Numerical Analysis},
  volume={56},
  number={3},
  pages={1621--1642},
  year={2018},
  publisher={SIAM}
}

@article{veeser2018quasiIII,
  title={Quasi-optimal nonconforming methods for symmetric elliptic problems. III---Discontinuous Galerkin and other interior penalty methods},
  author={Veeser, Andreas and Zanotti, Pietro},
  journal={SIAM Journal on Numerical Analysis},
  volume={56},
  number={5},
  pages={2871--2894},
  year={2018},
  publisher={SIAM}
}
\end{document}